\newlength{\minipagewidth}
\newcommand{\pcalF}[1]{\calF^{(#1)}}
\newcommand{\filt}{{\rm filt}}
\newcommand{\card}{\mathop{\rm card}}
\renewcommand{\sup}{{\rm sup}}
\newcommand{\comment}[1]{}
\newcommand{\nrep}{n_{\rm rep}}
\newcommand{\Proba}{{\rm Proba}}
\newcommand{\PP}{\mathbb{P}}
\newcommand{\E}{\mathbb{E}}
\newcommand{\N}{\mathbb{N}}
\newcommand{\R}{\mathbb{R}}
\renewcommand{\P}{\mathbb{P}}
\newcommand{\calP}{\mathcal{P}}
\newcommand{\calF}{\mathcal{F}}
\newcommand{\calE}{\mathcal{E}}
\newcommand{\calS}{\mathcal{S}}
\newcommand{\calB}{\mathcal{B}}
\newcommand{\calG}{\mathcal{G}}
\newcommand{\calX}{\mathcal{X}}
\newcommand{\Law}{{\rm Law}}
\newcommand{\Id}{{\rm Id}}
\newcommand{\eps}{\varepsilon}
\newcommand{\ph}{\varphi}
\newcommand{\dps}{\displaystyle} 
\newcommand{\st}{  |  }
\newcommand{\eqdef}{=}
\newcommand{\as}{\,\, \text{a.s.}} 
\newcommand{\tX}{\tilde{X}}
\newcommand{\tT}{{\rm T}}
\newcommand{\abs}[1]{\left | #1\right |}
\newcommand{\set}[1]{\left\{#1\right\}}
\newcommand{\p}[1]{ \left(#1\right) }
\newcommand{\norm}[1]{\left\Vert#1\right\Vert}
\theoremstyle{plain}
\newtheorem{The}{Theorem}[section]
\newtheorem{Lem}[The]{Lemma}
\newtheorem{Pro}[The]{Proposition}
\newtheorem{Def}[The]{Definition}
\newtheorem{Ass}{Assumption}
\theoremstyle{definition}
\newtheorem{Rem}[The]{Remark}
\newcommand{\M}{\text{M}}
\newcommand{\Ens}{\mathcal{E}}
\title{Unbiasedness of some generalized Adaptive Multilevel Splitting algorithms}
\author[1]{Charles-Edouard
  Br\'ehier\thanks{charles-edouard.brehier@unine.ch,
    gazeauma@math.toronto.edu, goudenege@math.cnrs.fr,
    lelievre@cermics.enpc.fr, roussetm@cermics.enpc.fr}}
\author[2]{Maxime Gazeau}%\thanks{gazeauma@math.toronto.edu}}
\author[3]{Ludovic Gouden\`ege}%\thanks{goudenege@math.cnrs.fr}}
\author[4]{Tony Leli\`evre}%\thanks{lelievre@cermics.enpc.fr}}
\author[4]{Mathias Rousset}%\thanks{roussetm@cermics.enpc.fr}}
\affil[1]{\small Universit\'e de Neuch\^atel,  Institut de Math\'ematiques,
  Rue Emile Argand 11, CH-2000 Neuch\^atel, Switzerland.}
\affil[2]{\small University of Toronto, Department of Mathematics, 40 St. George St., Toronto
M5S~2E4, Canada.}
\affil[3]{\small F\'ed\'eration de Math\'ematiques de l'\'Ecole Centrale Paris,
  CNRS, Grande voie des vignes, 92295 Ch\^atenay-Malabry, France.}
\affil[4]{\small Universit\'e Paris-Est, CERMICS (ENPC), INRIA, 6-8 Avenue Blaise Pascal, Cit\'e Descartes,  F-77455 Marne-la-Vall\'ee, France.}
\begin{document}

\maketitle

\begin{abstract}
We introduce a generalization of the Adaptive Multilevel Splitting
algorithm in the discrete time dynamic setting, namely when it is applied to sample
rare events associated with paths of Markov chains. By interpreting the algorithm as a sequential sampler in path space, we are able to build an estimator of the
rare event probability (and of any non-normalized quantity associated with
this event) which is unbiased, whatever the choice of the importance
function and the number of replicas. This has practical consequences
on the use of this algorithm, which are illustrated through
various numerical experiments.
\end{abstract}

\section{Introduction}
The efficient sampling of rare events is a very important topic in
various application fields such as reliability analysis, computational
statistics or molecular
dynamics. Let us describe the typical problem of interest in the
context of molecular dynamics.

\subsection{Motivation and mathematical setting}

Let us consider the Markov chain
$(X_t)_{t \in \N}$ defined as the discretization of the overdamped Langevin dynamics:
\begin{equation}\label{eq:Lang}
\forall t \in \N, \, X_{t+1} - X_t = -\nabla V(X_t) \, h + \sqrt{2 \beta^{-1}}
(W_{t+h} - W_t).
\end{equation}
Typically, $X_t \in \R^{3N}$ is a high-dimensional vector giving the
positions of $N$ particles in $\R^3$ at time $t \, h$ ($h>0$ being the time step size), $V: \R^{3N} \to \R$ is the potential
function (for any set of positions $x \in \R^{3N}$, $V(x)$ is the
energy of the configuration), $\beta^{-1}=k_BT$ is the inverse temperature and $W_t$ is a standard Brownian motion
(so that $W_{t+h} - W_t$ is a vector of $3N$
i.i.d. centered Gaussian random variables with variance~$h$). In many
cases of interest, the dynamics~\eqref{eq:Lang} is metastable: the $N$ particles remain trapped for very long times in some so-called
metastable states. These are for instance regions located around local minima of $V$. This actually corresponds to a physical reality:
the timescale at the molecular level (given by $h$, which is
typically chosen at the limit of stability for the stochastic
differential equation) is much smaller than
the timescales of interest, which correspond to hopping events between
metastable states. Let us denote by $A \subset \R^{3N}$ and $B \subset
\R^{3N}$ two (disjoint) metastable states. The problem is then the following: for some initial condition outside $A$ and $B$, how to efficiently sample
paths which reach $B$ before $A$? In the context of molecular
dynamics, such paths are called reactive paths. The efficient sampling of
reactive paths is a very important subject in many applications since
it is a way to understand the mechanism of the transition between
metastable states. In mathematical terms, one is
interested in computing, for a given test function $\varphi:
(\R^{3N})^\N \to \R$ depending on the path $(X_t)_{t\in\N}$ of the Markov chain, the expectation
\begin{equation}\label{eq:interest}
\E\Bigl(\varphi \bigl( (X_t)_{t\in\N} \bigr) \mathds{1}_{\tau_B < \tau_A}\Bigr)
\end{equation}
where $\tau_A=\inf \{t \in \N :  X_t \in A\}$, $\tau_B=\inf\{t \in \N :
     X_t \in B\}$ and $X_0=x_0\notin (A\cup B)$ is assumed (for simplicity) to be a deterministic initial position close to $A$: most trajectories starting from $x_0$ hit $A$ before $B$. If $\varphi=1$, the above expectation is $\P(\tau_B
    < \tau_A)$, namely the probability that the Markov chain reaches
    $B$ before $A$. This is typically a very small probability: since $A$ is
    metastable and $x_0$ is close to $A$,  for most of the
    realizations, $\tau_A$ is smaller than $\tau_B$. This is why naive Monte Carlo methods
    will not give reliable estimates of~\eqref{eq:interest}.    
%    \change{To fix
%    the ideas, one could think of $A$ and $B$ as two conformations of
%    a molecule, and estimating~\eqref{eq:interest} amounts to
%    studying the reactive paths which link the two
%    conformations, which is crucial in order to understand (and then
%    possibly control) the
%    physical, chemical or biological mechanisms to go from $A$ to $B$.}{
    We refer for example to \cite{BrehierGazeauGoudenegeRousset,CerouGuyaderLelievrePommier2011} for some examples in the context of molecular simulation.

\subsection{The adaptive multilevel splitting algorithm}

Many techniques have been proposed in the literature in order to
compute quantities such as~\eqref{eq:interest}, in particular control variate
techniques, importance sampling methods and splitting algorithms (see
for example the monograph~\cite{Bucklew2004} on rare event simulations). Here,
we focus on the Adaptive Multilevel Splitting (AMS) method which has been
proposed in \cite{CerouGuyader2007}. Let us roughly
describe the principle of
the method. The crucial ingredient we need is an
importance function: 
\begin{equation}
\xi: \R^{3N} \to \R
\end{equation}
which will be used to measure the advance of the paths towards $B$. 
This function is known as a reaction coordinate in the molecular
dynamics community, and this is the terminology we will use here. In this paper, we will also call $\xi(X_t)$ the
level of the process $X_t$ at time $t$. A useful requirement on $\xi$ is the existence of $z_{\rm max}\in\R$ such that
$$ B \subset \{x \in \R^{3N} : \xi(x) \in ]z_{\rm max},\infty[\}.$$
%$$\exists z_{max},\, B \subset \{x,\, \xi(x) \in (z_{max},\infty)\}.$$
For any path of the Markov chain, we call the maximum level of this path the quantity
$$\sup \{ \xi(X_{t \wedge \tau_A})_{t \in \N}\}.$$
Then, starting from a system of $n_{rep}$ replicas (all starting from the same initial condition $x_0$ and stopped at time $\tau_A$),
the idea is to remove the \emph{worst fitted} paths and to duplicate the \emph{best fitted}
paths while keeping a fixed number of replicas (we will discuss below
generalizations of the AMS algorithm where the number of replicas may
vary). The worst fitted paths are
those with the smallest maximum levels $\sup\{\xi(X_{t \wedge
  \tau_A})_{t \in \N}\}$. As soon as one of the worst fitted paths is
removed, it is replaced by resampling of one of the best fitted
path: the new path is a copy of the best fitted path up to the maximum
level of the removed paths, and the end of the trajectory is then sampled using 
independent random numbers. The algorithm thus goes through three steps: (i) the
level computation step (to determine the level under which paths will
be removed: this level is computed as an empirical quantile over the maximum levels among the replicas);
(ii) the splitting step (to determine  which paths will be removed and which ones of the remaining
best fitted paths will be duplicated); (iii) the resampling step (to
generate new paths from the selected best fitted paths). By iterating
these three steps, one obtains successively systems of $n_{rep}$
paths with an increasing minimum of the maximum levels among the replicas. The
algorithm is stopped when the current level is larger than $z_{\rm max}$, and an
estimator of~\eqref{eq:interest} is then built using a weighted empirical
average over the replicas. The adaptive feature of the algorithm is in
the first step (the level computation step): indeed, at each
iteration, paths are
removed if their maximum level is below some threshold, and these thresholds
are determined iteratively using empirical quantiles,
rather than by fixing a priori a deterministic sequence of levels (as
it would be the case in non-adaptive splitting, or more generally in standard sequential Monte Carlo algorithms,
see~\cite{CerouDel-MoralFuronGuyader2012,Del-Moral2004}). All the details of the algorithm will be
given in Section~\ref{sec:AMS_Markov}.

In this work we  focus on the application of the AMS algorithm to sample Markov chains, namely
discrete time stochastic dynamics, and not continuous time
stochastic dynamics as in~\cite{CerouGuyader2007} for example. The reason is mainly practical:
in most cases of interest, even if the original model is
continuous in time, it is discretized in time when numerical
approximations are needed.
There are actually also many cases where the original model is
discrete in time (for example kinetic Monte Carlo or Markov State Models in the context of molecular dynamics).
% Notice that the generalization of the algorithm in Section~\ref{sect:GAMS} applies to a general class of stochastic processes,
% which may be continuous \comment{TONY: Un exemple alors ?} or discrete in time.

The discrete time setting, which is thus of practical interest, raises
specific questions in the context of the AMS algorithm. First, in the
resampling step, a natural question is whether the path should be
copied up to the last time before or first time after it reaches the level of the removed
paths.  Second, in the discrete time context, it may happen that several paths have exactly the same
maximum level. This implies some subtleties in the implementation
of the splitting step which have a large influence on the quality of
the estimators, see Section~\ref{sect:BrownDrift}.

\subsection{Main results and outline}

The main results of this work are the following:
\begin{itemize}
\item We prove that the AMS algorithm for Markov chains with an appropriate
  implementation of the level computation and splitting steps yields an unbiased estimator
  of the rare event probability, and more generally of any
  non-normalized expectation related to the rare event of the
  form~\eqref{eq:interest}. We actually prove this unbiasedness result
  for a general class of splitting algorithms which enter into what we
  call the Generalized
  Adaptive Multilevel Splitting (GAMS) framework.
\item Using this GAMS framework, we propose various generalizations of the classical AMS
  algorithm which all yield unbiased estimators, in particular to
  remove extinction and to reduce the computational cost associated with
  sorting procedures. Moreover, we explain how to use the general setting to sample other random variables than
  trajectories of Markov chains.
\item We illustrate numerically on toy examples the importance of an appropriate
  implementation of the level computation and splitting steps in the
  AMS algorithm to get unbiasedness. We also
  discuss through various numerical experiments the
  influence of the choice of the reaction coordinate $\xi$ on the
  variance of the estimators and we end up with some practical recommendations in order to get reliable estimates
using the AMS algorithm (see Section~\ref{sec:conc}). In particular, using the
unbiasedness property proven in this paper, it is possible to compare
the results obtained using different parameters (in particular
different reaction coordinates) in order to assess the quality of the
numerical results.
\end{itemize}

Compared to previous results in the literature concerning the AMS
algorithm, the main novelty of this work is the proof of the unbiasedness in a
general setting and whatever the parameters: the number of replicas,
the (minimum) number of resampled replicas at each iteration and the
reaction coordinate~$\xi$. The proof of unbiasedness relies on the
interpretation of the AMS algorithm as a sequential Monte Carlo
algorithm in path space with the reaction coordinate as a time index,
in the spirit of~\cite{JohansenDel-MoralDoucet2005} (the selection and
mutation steps respectively corresponds to the
branching step and the resampling step in the AMS algorithm). This analogy is made precise in Section~\ref{sec:SMC}.
 In previous works, see for instance~\cite{BrehierLelievreRousset2015,GuyaderHentgartnerMatzner-Lober2011,Simonnet2014},
unbiasedness is proved in an idealized setting, namely when the reaction coordinate is given by $\xi(x)=\P_x(\tau_B < \tau_A)$ (known as the committor function; here, the subscript $x \in \R^{3N}$ indicates that the Markov chain $X_t$ has $x$ as an initial condition), and for a different resampling step, where new
replicas are sampled according to the conditional distribution of
paths conditioned to reach the level of the removed replicas. In many
cases of practical interest, these two conditions are not met.

In addition, we illustrate through extensive numerical experiments the influence of the choice of $\xi$ on the
variance. Indeed, as for any Monte Carlo algorithm, the bias is only one part of the error
when using the AMS algorithm: the statistical error (namely the
variance) also plays a crucial role in the quality of the estimator as
will be shown numerically in Section~\ref{sect:num}. There are
unfortunately very few theoretical results concerning the influence of the choice
of $\xi$ on the statistical error. We refer
to~\cite{BrehierGoudenegeTudela,CerouDel-MoralGuyaderMalrieu2014} for
an analysis of the statistical error. For discussions of the role of $\xi$ on the statistical
error, we also refer
to~\cite{GarvelsKroeseVan-Ommeren2002,GlassermanHeidelbergerShahabuddinZajic1998,RollandSimonnet2015}. In
particular, in the numerical experiments, we discuss situations for
which the confidence intervals of the estimators associated with different reaction
  coordinates do not overlap if the number of independent realizations
  of the algorithm is not sufficiently large. We relate this observation to the well-known
phenomenon of ``apparent bias'' for splitting algorithms,
see~\cite{GlassermanHeidelbergerShahabuddinZajic1998}.

We would like to stress that our results hold in the setting where a
family of resampling kernels indexed by the levels is available (see Section~\ref{sec:resampling} for a
precise definition). % : given a replica and a
% level, we assume there is a way to sample a new replica conditionally
% on the fact that it reaches the given level.
This is 
particularly well suited to the sampling of trajectories of
Markov dynamics (see Section~\ref{sec:bridge} for another possible
setting). In the terminology of~\cite{JohansenDel-MoralDoucet2005}, we
have in mind the dynamic setting (considered for example in~\cite{CerouGuyader2007}), and not  the
static setting (considered for example in~\cite{CerouDel-MoralFuronGuyader2012,CerouDel-MoralGuyaderMalrieu2014}).

The paper is organized as follows. The AMS algorithm applied
to the sampling of paths of Markov chains is described in
Section~\ref{sec:Markov}. This algorithm actually enters into a more
general framework, the Generalized Adaptive Multilevel Splitting
(GAMS) framework which is described in detail in
Section~\ref{sect:GAMS}. The interest of this generalized setting is
twofold. First, it is very useful to write variants of the
classical AMS algorithm which will still yield unbiased estimators
(some of them are described in Section~\ref{sec:variants}). Second,
it highlights the essential mathematical properties
that are required to produce unbiased estimators of quantities such
as~\eqref{eq:interest}. This is the subject of Section~\ref{sec:unbiased} which is
devoted to the main theoretical result of this work: the unbiasedness
of some estimators, including estimators
of~\eqref{eq:interest}. Finally, Section~\ref{sect:num} is entirely
devoted to some numerical experiments which illustrate the
unbiasedness result, and discuss the efficiency of the AMS algorithm
to sample rare events.

\subsection{Notation}\label{sec:notation}
Before going into the details, let us provide a few general notations
which are useful in the following.
%\comment{Je mets là les notations/définitions standards qui peuvent être utiles à plusieurs endroits}
\begin{itemize}
\item The underlying probability space is denoted by
  $(\Omega,\mathcal{F},\P)$. We recall standard notations: for $m$ $\sigma$-fields
  $\calF^1,\ldots,\calF^m \subset \calF$,
  $\calF^1\vee \ldots \vee \calF^m$ denotes the smallest
  $\sigma$-field on $\Omega$ containing all the $\sigma$-fields
  $\calF^1,\ldots,\calF^m$. For any $t,s\in\N$, $t\wedge
  s=\min\{t,s\}$ and $t\vee s=\max\{t,s\}$. We use the convention $\inf \emptyset = + \infty$. For
  two sets $A$ and $B$ which are disjoint,  $A \sqcup B$ denotes the disjoint
set union.

% A $\sigma$-field of events is a sub-$\sigma$-field of $\mathcal{F}$.

\item We work in the following standard setting: random variables
  take values in state spaces $\calE$ which are Polish (namely
  metrizable, complete for some distance $d_{\calE}$ and
  separable). The associated Borel $\sigma$-field is denoted by
  $\calB(\calE)$. We will give precise examples below (see for example
  Section~\ref{sec:Markov_path} for the space of trajectories for Markov chains).

Then $\Proba(\calE)$ denotes the set of probability distributions on
$\calE$. It is endowed with the standard Polish structure associated with
the Prohorov-Levy metric which metrizes convergence in distribution,
{\it i.e.} weak convergence of probabilities tested on continuous and
bounded test functions (see for example~\cite{billinsgley-99}).

The distribution of a $\calE$-valued random variable $X$ will be denoted by $\Law(X)$.

%\comment{Ref Billingsley}

\item If $\calE_1$ and $\calE_2$ are two Polish state spaces, a Markov
  kernel (or transition probability kernel) $\Pi(x_1,dx_2)$ from $\calE_1$
  to $\calE_2$  is a measurable map from initial states in $x_1 \in
  \calE_1$, to probability measures in $\Proba(\calE_2)$.

%In this context, measurability is (standardly) defined equivalently as: (i) $x_1 \mapsto \Pi(x_1,A)$ is measurable for any Borel set $A \in \calB(\calE_2)$; (ii) $x_1 \mapsto \Pi(x_1,d x_2) \in \Proba(\calE_2)$ is measurable, $\Proba(\calE_2)$ being endowed with the standard Polish structure associated with the Prohorov or (Levy) metric\footnote{which metrizes convergence in distribution, {\it i.e.} weak convergence of probabilities tested on continuous and bounded test functions (observables)}.
\item We use the following standard notation associated with probability transitions: for $\ph:\calE_2\rightarrow \R$ a bounded and measurable test function, %\comment{Vérifier en fonction du choix !!}
\begin{equation}\label{eq:pi_phi}
\Pi(\ph)(x_1)= \int_{x_2 \in \calE_2} \ph(x_2) \Pi(x_1,dx_2).
\end{equation}
Similarly, we use the notation $\pi(\ph) = \int_{x \in \calE_2} \ph(x) \pi(dx)$ for $\pi \in \Proba(\calE_2)$.
% \[
% \pi_z(\ph)(x) = \int_{x' \in \calP} \ph(x') \pi_z(x,dx').
% \]
\item Let $X_1$ and $X_2$ be random variables respectively with values
  in $\calE_1$ and $\calE_2$ and $\Pi$ a Markov kernel from $\calE_1$
  to $\calE_2$. In the algorithms we describe below, we
  will use the notion of conditional sampling:
  {$X_2$ is sampled conditionally on
    $X_1$} with law $\Pi(X_1, \, . \,)$ (denoted by $X_2 \sim \Pi(X_1, \, . \,)$)
rigorously means that $X_2 = f(X_1,U) \as$ where, on the one hand, $U$ is
some random variable independent of $X_1$ and of all the random variables introduced
before (namely at previous iterations of the algorithm) and, on
the other hand, $f$ is a measurable function which is such that $\Pi(x_1, \, . \,) =
\Law(f(x_1,U))$, for $\Law(X_1)$-almost every $x_1 \in \calE_1$.

% In the special case where $\calE_1 = \calE_2$ and ${\rm Law}(X_1)$ is an (approximation of) some invariant probability of $P$, we will (informally) say that $ X_2 \sim P(X_1, \, . \,) $ is \emph{resampled}.

\item

% This is a standard description used in the analysis of
% Sequential Monte Carlo algorithms, see for example~\comment{REF}.
% For the sake of completeness, let us describe the labelling procedure
% in details. Labels are picked in a space of labels $L$ (think of $L=(0,1)$
% equipped with $\sigma$-field of Borel sets associated with the Euclidean distance, and
% the Lebesgue measure).

A random system of replicas in $\calE$ is denoted by
\begin{equation}\label{eq:def_Erep}
\calX=\p{X^{(n)}}_{n \in I} \in \calE^{\rm rep}, \quad \card I < + \infty,
\end{equation}
where $I \subset \N^\ast$ is a random finite subset of labels and
$(X^{(n)}) _{n \in I}$ are elements of $\calE$. The
space~$\calE^{\rm rep}$ is endowed with the following distance: for $\calX^1=\p{x^{(1,n)}}_{n \in
  I^1}$ and $\calX^2=\p{x^{(2,n)}}_{n \in  I^2}$ in $\calE^{\rm rep}$,
we set
\begin{equation*}
d(\calX^1,\calX^2)=
\left\{
\begin{array}{ll}
2 & \text{ if } I^1 \neq I^2,\\
\min\left\{ \sum_{n \in I^1} d_{\calE}(x^{(1,n)},x^{(2,n)}), 1 \right\} & \text{ if } I^1=I^2.
\end{array}
\right.
\end{equation*}
Endowed with this distance, the set $\calE^{\rm rep}$ is Polish and we
denote by $\calB(\calE^{\rm rep})$ the Borel $\sigma$-field. This
$\sigma$-field can also be written as follows:
$$\calB(\calE^{\rm rep})=\bigsqcup_{I \in \mathcal{I} } \calB(\cal
E)^{\card I}$$
where $\mathcal{I}$ denotes the ensemble of finite subsets of
$\N^\ast$ (which is a discrete set).
\comment{TONY: OK avec ca ?}

% that will always be (in the present work) of the form $\set{1,2,\ldots,N}$.
  %\comment{Il faut décider si $\calX = p{G^{(n)},X^{(n)}}_{n \in I} $ ou $\calX = p{X^{(n)}}_{n \in I} $}

% The state
% space $\calP^{\rm rep}$ can be described as the set of all finite sets
% of the form $\set{x^{(n_1)}, \ldots , x^{(n_m)} }$ with $m \in \N$, and
% for $i \in \{1,\ldots,m\}$, $n_i \in L$ and $x^{(n_i)} \in
% \calP$. Equivalently, one can think of the replica $x^{(n_i)}$ as the
% couple $(x,n_i) \in \calP \times L$. 
% The space $\calE^{\rm rep}$ is a Polish space as the disjoint union of
% Polish spaces
% % using elementary set constructions (disjoint union and product space):
% \begin{equation}\label{eq:def_Erep}
% \calE^{\rm rep} = \bigsqcup_{ m \in \N } \calE^m,
% \end{equation}
% and the associated Borel $\sigma$-field is simply the disjoint union of product of $\sigma$-fields:
% \begin{equation}\label{eq:def_sigma-field_Erep}
% \calB\bigl(\calE^{\rm rep}\bigr)=\bigsqcup_{ m \in \N } \calB\bigl(\calE^{\rm rep}\bigr)^m.
% \end{equation}

% \comment{Rajout: la d\'efinition de la tribu. Est-ce qu'on ne devrait
%   pas mettre la distance associ\'ee?}

% \comment{TONY: je ne vois pas où on introduit l'aléa associé à
%   l'ensemble des indices $I$ ?}

\item When we consider systems of weighted replicas, to each
  replica $X^{(n)}$ of the system $\calX$ with label $n\in I$  is
  attached a weight $G^{(n)} \in \R_+$, and we use the notation
  $\calX=\bigl(X^{(n)},G^{(n)}\bigr)_{n \in I}$. The topological setting is the
  same as in the previous item, $\calE$ being replaced by the augmented state space $\calE
  \times \R$.

%\comment{A eclaircir. Une ref \`a Bourbaki ? Rep: (...)}

\end{itemize}

\section{The AMS algorithm for Markov chains}\label{sec:Markov}

The two goals of this section are to define the AMS
algorithm applied to paths of a Markov chain (namely a discrete time
stochastic process) and to introduce unbiased estimators in this setting.

A special care should be taken to treat the
situations when many replicas have the same maximum level, or the
situations when there is extinction of the population of
replicas. These aspects which are specific to the discrete time setting were not treated in details in many previous
works where continuous time diffusions were considered.

\subsection{The Markov chain setting}\label{sec:Markov_path}

Let $\tX=(\tX_{t})_{t \in \N}$ be a Markov chain defined on a probability
space $(\Omega,\calF,\PP)$, with probability transition $P$. We assume
that $\tX_t$ takes values in a Polish state space $\calS$. Without loss of generality, we assume that $\tX_0=x_0$ where
$x_0\in\calS$ is a deterministic initial condition. The generalization
to a random initial condition $\tX_0$ is straightforward.

The path space is denoted by
\begin{equation}\label{eq:P_AMS}
\calP =\left\{x=(x_t)_{t\in \N} : \, x_t\in \calS \text{ for all } t \in \N\right\}.
\end{equation}
It is well-known that, by introducing the distance $d_\calP(x,y) =
\sum_{t \in \N} \frac{1}{2^t} \bigl(1 \wedge \sup_{s \leq t}
d_\calS(x_s,y_s)\bigr)$ (which is a metric for the product topology), the space $(\calP,d_\calP)$ is complete and
separable. We denote by $\calB(\calP)$ the corresponding Borel
$\sigma$-field. We thus see $\tX$ as a random variable with values
in $\calP$.

The set of paths $(\calP,\calB(\calP))$ is endowed with the natural
filtration in time $(\calB_t)_{t \in \N}$: $\calB_t \subset
\calB(\calP)$ is the smallest $\sigma$-field such that $(x_s)_{s \in
  \N} \in \calP \mapsto (x_1, \ldots , x_t) \in \calS^t$ is
measurable. Observe that the natural filtration for a given Markov chain $\tX$ defined
on $(\Omega,\calF,\PP)$ with values in $\calS$ is then given by the
pullback of the filtration $(\calB_t)_{t \in \N}$ by
$\tX: (\Omega,\calF) \to (\calP,\calB(\calP))$.

\subsection{The rare event  of interest}

Given two disjoint Borel subsets $A$ and $B$ of $\calS$, our main objective is the efficient sampling of events such as $\left\{\tau_B<\tau_A\right\}$ where
\[
\tau_{A}=\inf\left\{t\in \N : \tX_t\in A\right\} \qquad \text{and}
\qquad  \tau_{B}=\inf\left\{t\in \N : \tX_t\in B\right\}
\] 
are respectively the first entrance times in $A$ and $B$. Both
$\tau_{A}$ and $\tau_{B}$ are stopping times with respect to the
natural filtration of the process $\tX$.% Actually, the algorithm below (and the theoretical results in~\cite{BGGLR}) can be easily generalized to the situation when $\tau_A$ and $\tau_B$ are any stopping times with respect to the natural filtration of the chain $\tX$.

We are mainly interested in the estimation of the probability $\P(\tau_B<\tau_A)$ in the rare event
regime, namely when this probability is very small (typically less
than $10^{-8}$). This occurs for example if the initial condition $x_0
\in  A^c \cap B^c$ is such that $x_0$ is close to $A$, and $A$ and
$B$ are metastable regions for the dynamics. The Markov
chain starting from (a neighborhood of) $A$ (resp. $B$) remains for a very long time near
$A$ (resp. $B$) before exiting, and thus, the Markov chain starting from
$x_0$ reaches $A$ before $B$ with a probability close to
one. Specific examples will be given in Section~\ref{sect:num}.

Let us introduce the Markov chain stopped at time $\tau_A$: $X=\p{X_t}_{t\in\N}$ where
\begin{equation}\label{eq:X_Path}
X_t=\tX_{t\wedge \tau_A} \quad \text{for any}~ t\in \N.
\end{equation}

%\[X^{\tau_A}=(X_{t}^{\tau_A})_{t\in \N}\eqdef (X_{t\wedge \tau_A})_{t\in\N}.\]

The probability distribution of the stopped Markov chain $X$ (seen as a $\calP$-valued random variable) is denoted by
\begin{equation}\label{eq:Pi_Path}
\pi \eqdef \Law \p{ X } \in \Proba(\calP).
\end{equation}
The probability of interest can be rewritten% in terms of $\pi$:
\[
\P(\tau_B<\tau_A)=\E\p{\mathds{1}_{\tT_B(X)<\tT_A(X)}} %=\E \p{\mathds{1}_{\tau_B(.)<\tau_A(.)}(X^{\tau_A})},
\]
where
we denote for any path $x \in \calP$
$$\tT_{A}(x)=\inf\left\{t\in \N : x_t\in A\right\} \quad \text{ and } \quad \tT_{B}(x)=\inf\left\{t\in \N : x_t\in B\right\}.
$$

More generally, the algorithm allows us to estimate expectations of the
following form:
\begin{equation}\label{eq:phi}
\pi(\ph) = \E \p{ \ph( X)},
\end{equation}
for any observable $\ph:\calP\rightarrow \R$ such that $\pi(|\ph|)$ is
finite.

\begin{Rem}[On the stopping times $\tau_A$ and $\tau_B$]
We defined above the stopping times as first entrance times in some
sets $A$ and $B$. As will become clear below, the definition of the algorithm and
the unbiasedness result only require $\tau_A$ and $\tau_B$ to be
stopping times with respect to the natural filtration of the chain
$\tX$ ({\it i.e.} $\tT_A$ and $\tT_B$ to be stopping times on
$(\calP,\calB(\calP))$ endowed with the natural
filtration). 
\end{Rem}

\subsection{Reaction coordinate}

The crucial ingredient we need to introduce the AMS algorithm is an
importance function, also known as a reaction coordinate or an order
parameter in the context of molecular dynamics. This is a measurable $\R$-valued mapping defined on the state space $\calS$:
\[
\xi : \calS \to \R.
\]
The choice of a good function $\xi$ for given sets $A$ and $B$ is a
difficult problem in general. One of the main aims of this paper is to show that
whatever the choice of $\xi$, it is possible to define an unbiased
estimator of~\eqref{eq:phi}. The only requirement we impose on $\xi$
is that there exists a constant $z_{\rm max} \in \R$ such that
\begin{equation}\label{eq:hyp_B}
B \subset \xi^{-1}\p{ ] z_{\rm max} , + \infty [ }.
\end{equation}

In what follows, the values of $\xi$ are called levels and
we will very often refer to the maximum level of a path, defined as follows:
\begin{Def}
For any path $x\in\calP$, the {\em maximum level} of $x$ is defined as the supremum of $\xi$ along the path $x$ stopped at $\tT_A(x)$:
\begin{equation}
  \label{eq:max}
  %\boxed{
 \Xi(x) \eqdef \sup \{ \xi(x_{t \wedge \tT_A(x)})_{t \in \N} \} \in\R\cup\left\{+\infty\right\}.%=\max_{0 \leq t \leq \tau_A(x)} \xi(x__{t \wedge \tau_A(x)}).
%}
\end{equation}
\end{Def}
The function $\Xi$ can be seen as a reaction coordinate on the path
space $\calP$.

%\comment{In practice $\tT(A)(x)<+\infty$ in the algorithm, and $\sup$ is a $\max$}

We also introduce for any level $z\in\R$ and any path $x\in\calP$
\begin{equation}\label{eq:Tz}
%\boxed{
\tT_{z}(x) \eqdef \inf \{ t \in \{0, \ldots,\tT_A(x)\} :   \xi(x_t) > z \},
%}
\end{equation}
which is the first entrance time of the path $x$ stopped at
$\tT_A(x)$ in the set $\xi^{-1}(]z,+\infty[)$. We emphasize on the strict inequality
in the above definition of the entrance times $\tT_z(x)$: it is one of the
important ingredients of the proof of the unbiasedness of the
estimator of~\eqref{eq:phi}. Notice that the above
assumption~\eqref{eq:hyp_B} on $B$ is equivalent to the
inequality $$\forall x\in\calP, \, \tT_{z_{\rm
    max}}(x) \leq \inf \{ t \in \{0, \ldots,\tT_A(x)\} :   x_t \in B \}.$$

%\comment{Mettre l'hypoth\`ese ici}

We denote by
\begin{equation}\label{eq:tauz}
%\boxed{
\tau_{z} \eqdef \tT_z(X)= \inf \{ t \in \{0, \ldots,\tau_A\} : \xi(X_t) > z \}.
%}
\end{equation}
the entrance time associated with the (stopped) Markov chain $X$. It is a stopping time for the natural filtration of the Markov chain.

\begin{Rem}[On Assumption~\eqref{eq:hyp_B}]
Assumption~\eqref{eq:hyp_B} is extremely useful in practice when
computing approximations of averages of the form $\E\p{\ph(X)
  \mathds{1}_{\tau_B<\tau_A}}$: it allows to remove from
memory the replicas which are declared ``retired'' in the splitting
step at each iteration in the AMS algorithm described in the sequel, since by
construction we know in advance that they will not contribute to the
computation of the associated estimator. The algorithm thus only
requires to retain a fixed number of replicas, denoted by $\nrep$ below.
\end{Rem}

\subsection{Resampling kernel}\label{sec:resampling_kernel}

The AMS algorithm is based on an interacting system of weighted replicas. At each iteration, copies of the Markov chain, called replicas, are simulated (in parallel) and are ranked
according to their maximum level \eqref{eq:max}.
The less fitted trajectories, {\it i.e.} those with the lowest values of the maximum levels, are
resampled according to a resampling kernel.
For any $z\in\R$ the resampling kernel $\pi_z$ (which is in fact a
transition probability kernel from $\R \times \calP$ to $\calP$) is denoted by
\begin{equation}\label{eq:piz_chain_1}
\pi_z:\left\{
\begin{aligned}
&\calP\rightarrow \text{Proba}(\calP)\\
&x\mapsto \pi_z(x,dx')
\end{aligned}
\right.
\end{equation}
and is defined as follows: for any $x\in\calP$, $\pi_z(x,dx')$ is the
law of the $\calP$-valued  random variable $Y$ such that
\begin{equation}
  \label{eq:piz_chain}
%\boxed{
  \begin{cases}
%  \pi_z(dx',x) = \Law( X^x_{t \wedge \tau_A(X) } , \, t \geq 0 )  &\\
Y_t = x_t  & \text{if} \, t \leq \tT_z(x) \\
\Law( Y_t \st   Y_s, \, 0 \leq s \leq t-1 ) = P( Y_{t-1}, \, . \, ) & \text{if} \,  t  > \tT_z(x)
\end{cases}
%}
\end{equation}
and is stopped at $\tT_A(Y)$ when $Y$ hits $A$. We recall that $P$ is the transition kernel of the Markov chain $X$. In other words, for $t\leq \tT_z(x)$, $Y$ is identically $x$, while for $t>\tT_z(x)$, $Y_t$ is generated according to the Markov dynamics on $\calS$, with probability transition $P$, and stopped when reaching $A$. We thus perform a branching of the path $x$ at time $\tT_z(x)$ and position $x_{\tT_z(x)}$.

Notice from the definition of the resampling kernel that if $\Xi(x)\leq z$, then the resampling kernel does not modify $x$: $\tT_z(x)=+\infty$ and $\pi_z(x,dx')$ is a Dirac mass: $Y_t=x_{t\wedge \tT_A(x)}$ for any $t\in\N$.

% This resampling kernel $\p{\pi_z}_{z\in\R}$ satisfies a right-continuity property in the level variable $z$, see REF.

One of the important ingredients to prove the unbiasedness of the estimator
of~\eqref{eq:phi} is the following
right-continuity property: for all $x \in \calP$, and for any continuous bounded test function $\ph:\calP \to \R$, the mapping $$z \mapsto \int_{\calP} \ph(x') \pi_z(x,dx')$$ is right-continuous, see Section~\ref{sec:justif_Markov} for a proof of this statement.

%\comment{ Proposition de Mathias: on peut changer la notation en $$ p_{z,x}(d x') = \pi_z(x, d x')$$, ça a le mérite de souligner le fait que c'est une transition de $\R \times \calP$ to $\calP$ }.

\subsection{The AMS algorithm}\label{sec:AMS_Markov}

In this section, we introduce the AMS algorithm in the specific
context of sampling of Markov chain trajectories. The associated unbiased estimator of~\eqref{eq:phi} is given
in the next section. A generalized adaptive multilevel splitting
framework which encompasses the AMS algorithm described here will be
provided in Section~\ref{sect:GAMS}.

\paragraph{Short description of the AMS algorithm}

%\comment{Labels?}

In addition to the choice of the reaction coordinate $\xi$, two other parameters of the algorithm need to be specified: $\nrep$, the number of replicas,
and $k \in \{1, \ldots, \nrep-1\}$ the (minimum) number of replicas
resampled at each step of the algorithm.  At the $q$-th iteration, the replicas
of the Markov chain $X$ are denoted by $X^{(n,q)}$ 
 where $n$ is the label of the replica. The algorithm defines a non-decreasing
 sequence of random levels $Z^{(q)}$.  At the beginning of iteration $q$, the level $Z^{(q)}$
 is the $k$-th order statistics of the maximum levels of
 the ``working'' replicas $X^{(n,q)}$ (the notion of ``working'' replicas is defined in the full description of the algorithm). Then, all replicas
 with maximum levels lower or equal to $Z^{(q)}$ are declared  ``retired", and  resampled in
 order to keep a fixed number $\nrep$ of replicas with maximum level
 strictly larger than $Z^{(q)}$.  As explained above, the resampling procedure consists in duplicating one of
 the replica such that its maximum level is larger than $Z^{(q)}$ up
 to the time $\tau_{Z^{(q)}}$, and then in using the resampling kernel, which amounts in
 completing the trajectory up to time $\tau_A$ with the Markov
 transition kernel $P$. The set of labels of the $\nrep$ replicas obtained at the end of the
 $q$-th iteration and which, by construction, all have a maximum
 level larger than  $Z^{(q)}$ is denoted by~$I_{\rm
   on}^{(q+1)}$. The subscript ``on'' indicates that these are the
 replicas which have to be retained to pursue the algorithm (the
 so-called ``working'' replicas in the terminology introduced below). Consistently, the subscript ``off'' refers to the ``retired'' replicas at a given iteration.
 The algorithm stops either if the level $z_{\rm max}$ is reached
 (namely $Z^{(q)}>z_{\rm max}$) or if all the replicas at the end of
 iteration $q$ have maximum levels lower than or equal to the $k$-th
 order statistics. When the algorithm stops, an unbiased estimator of~\eqref{eq:phi} is then defined as a weighted empirical average over the replicas.%The (random) number of iterations is used in the definition of the unbiased estimator of~\eqref{eq:phi}.

\paragraph{Full description  of the AMS algorithm}

The AMS algorithm generates iteratively a system of weighted
replicas in the state space $\calP^{\rm rep}$, using selection and
resampling steps.

In order to define an estimator of $\pi(\varphi)$ for any observable
$\varphi$, we will need to consider the set
$I_{\rm off}^{(q)}$ of all the labels of the replicas which have been declared retired before
iteration $q$ (namely those with a maximum level smaller or equal than $Z^{(q-1)}$). We will denote by $I^{(q)}= I_{\rm on}^{(q)} \sqcup I_{\rm
  off}^{(q)}$ the set of all the labels of the replicas generated by the
algorithm up to iteration $q$. We recall that $\sqcup$ denotes the disjoint
set union. Notice that the
 cardinal of $I^{(q)}$ is increasing, while $\card I_{\rm on}^{(q)} =
 \nrep$ for any $q$. At step $q$, the replicas with labels in $I_{\rm
   on}^{(q)}$ are referred to as the working replicas, and
 the replicas with labels in $I_{\rm off}^{(q)}$ as the retired replicas.
The unbiased estimator of~\eqref{eq:phi} associated with the 
 AMS algorithm will be defined in the next section.

We are now in position to introduce the AMS algorithm in full detail (see Figure~\ref{Fig:AMS} for a schematic representation of one iteration of the algorithm).

\paragraph{The initialization step ($q=0$)}
\begin{enumerate}[(i)]
%\item Set $(U^{(q)}_l,U^{(q)}_b,U^{(q)}_r)_{q \geq 1}$ an iid sequence that will be used as random seeds for level computation, branching and resampling respectively.
%\item Choose the initial number $\nrep$ of replicas and the minimum number of resampled replicas $k$. Set by convention $Z^{(0)}= - \infty $. 
\item Let $\p{ X^{(n,0)} }_{1 \leq n \leq \nrep }$ be i.i.d. replicas
  in $\calP $ distributed according to $\pi$, defined
  by~\eqref{eq:Pi_Path}. At this initial stage, all replicas are
  working replicas {\it i.e.} $I^{(0)}=I_{\rm on}^{(0)} =\{1,\ldots,\nrep\} $ and $I_{\rm off}^{(0)}=\emptyset$.
\item Initialize uniformly the weights: $G^{(n,0)} = 1/\nrep $ for $n
  \in \{1, \ldots , \nrep\}$.
\item Compute the order statistics of $\p{ \Xi( X^{(n,0)}) }_{n \in
    I_{\rm on}^{(0)}}$:  namely a permutation  $\Sigma^{(0)}$ of the set of labels $I_{\rm on}^{(0)}=\left\{1,\ldots,n_{\rm rep}\right\}$ such that
\[
\Xi( X^{(\Sigma^{(0)}(1),0)}) \leq \ldots \leq \Xi( X^{(\Sigma^{(0)}(n_{\rm rep}),0)}) 
\]
and set the initial level as the $k$-th order
statistics\footnote{Notice that $\Sigma^{(0)}$ is not necessarily
  unique since several replicas may have the same
 maximum  level. Nevertheless, the level $Z^{(0)}$ does not depend on the
  choice of $\Sigma^{(0)}$. The same remark applies to the definition
  of the level $Z^{(q+1)}$ at iteration $q \ge 0$, see Remark~\ref{rem:same_max}.}: 
\[
Z^{(0)} = \Xi( X^{(\Sigma^{(0)}(k),0)}).
\]
\item If $\card \left\{n \in
    I_{\rm on}^{(0)} :  \Xi( X^{(n,0)}) \leq Z^{(0)} \right\}=\nrep$, then set $Z^{(0)} = +\infty.$

\end{enumerate}

\noindent

\paragraph{Iterations} Iterate on $q\geq 0$, while the following stopping criterion is not satisfied.

%We set $q=1$, and iterate on $q$ the following steps.

\paragraph{The stopping criterion}

If $Z^{(q)}>z_{\rm max}$, then the algorithm stops. When it is the case, set $Q_{\rm iter}=q$.
Else perform the following four steps.

\paragraph{The splitting (branching) step}

\begin{enumerate}[(i)]
\item Consider the following partition of the working replicas' labels in $I_{\rm on}^{(q)}$:
\[
I_{\rm on}^{(q)} = I_{ {\rm on}, \leq Z^{(q)}}^{(q)} \sqcup I_{ {\rm on}, > Z^{(q)}}^{(q)} 
\]
where replicas with maximum level smaller or equal than $Z^{(q)}$ have labels in
\[ 
I_{ {\rm on}, \leq Z^{(q)}}^{(q)}  =\set{ n \in  I_{\rm on}^{(q)}, \,   \Xi( X^{(n,q)})  \leq Z^{(q)}  },
\]
while the set of replicas' labels with maximum level strictly larger than $Z^{(q)}$ is
\[ 
I_{ {\rm on}, > Z^{(q)}}^{(q)}  =\set{ n \in  I_{\rm on}^{(q)}, \,    \Xi( X^{(n,q)})  > Z^{(q)}  }.
\]
We set $K^{(q+1)}=\card I_{ {\rm on}, \leq Z^{(q)}}^{(q)}=\nrep-\card
I_{ {\rm on}, > Z^{(q)}}^{(q)}$. Notice that $K^{(q+1)}\geq k$. The set $I_{\rm on}^{(q)}$ denotes
the working replicas at the beginning of iteration $q$. Among them, the replicas with
labels in~$I_{ {\rm on}, \leq Z^{(q)}}^{(q)}$ will be declared retired and
replaced by
branching replicas with labels in~$I_{ {\rm on}, > Z^{(q)}}^{(q)}$
using the resampling kernel $\pi_{Z^{(q)}}$, as
explained below.
Notice that necessarily, $\card I_{ {\rm on}, >
  Z^{(q)}}^{(q)}  \geq 1$ (otherwise $Z^{(q)}=+\infty$ and the stopping criterion has been fulfilled before entering the splitting step of iteration~$q$).

\item Introduce a new set  $I^{(q+1)}_{\rm new} = \set{\card I^{(q)}
    +1, \ldots , \card I^{(q)} +K^{(q+1)}}  \in \N^\ast \setminus I^{(q)}$ of labels for the new replicas sampled at iteration $q$.

\item Define the children-parent mapping $P^{(q+1)}:
I_{\rm
    new}^{(q+1)}\rightarrow I_{{\rm on}, >Z^{(q)}}^{(q)}$
    as follows: the labels $\Bigl(P^{(q+1)} \left( \card I^{(q)} +\ell \right) \Bigr)_{1\leq
  \ell\leq K^{(q+1)}}$ are $K^{(q+1)}$ random labels independently and uniformly distributed in $I_{ {\rm on}, > Z^{(q)}}^{(q)}$.

This mapping associates to the label of a new replica the label of its parent. The parent replica (with label in $I_{{\rm on}, >Z^{(q)}}^{(q)}$) is used in the resampling procedure to create the new replica (with label in $I_{\rm new}^{(q+1)}$).

\item For any $n\in I_{ {\rm on}, > Z^{(q)}}^{(q)}$, the branching number
\begin{equation}\label{eq:BN_AMS}
B^{(n,q+1)}=1+\card\left\{ n'\in I_{\rm new}^{(q+1)} : \, 
  P^{(q+1)}(n')=n\right\}
\end{equation}
represents the number of offsprings of $X^{(n,q)}$.
The replica $X^{(n,q)}$ will be split into
$B^{(n,q+1)}$ replicas: the old one $X^{(n,q)}$ with label $n\in I_{{\rm on},
  >Z^{(q)}}^{(q)}$ and, if  $B^{(n,q+1)} > 1$, $B^{(n,q+1)}-1$ new ones with labels $n'\in I_{\rm
  new}^{(q+1)}$ such that $P^{(q+1)}(n')=n$ .

\item The sets of new labels are then updated as follows:
$$
I_{\rm on}^{(q+1)}=I_{ {\rm on}, > Z^{(q)}}^{(q)}\sqcup I_{\rm new}^{(q+1)}, \quad
I_{\rm off}^{(q+1)}=I_{\rm off}^{(q)}\sqcup I_{ {\rm on}, \leq Z^{(q)}}^{(q)},\quad
I^{(q+1)} = I_{\rm on}^{(q+1)} \sqcup I_{\rm off}^{(q+1)}.
$$
% \begin{cases}
% I_{\rm on}^{(q+1)}=I_{ {\rm on}, > Z^{(q)}}^{(q)}\sqcup I_{\rm new}^{(q+1)}, \quad
% I_{\rm off}^{(q+1)}=I_{\rm off}^{(q)}\sqcup I_{ {\rm on}, \leq Z^{(q)}}^{(q)},\\
% I^{(q+1)} = I_{\rm on}^{(q+1)} \sqcup I_{\rm off}^{(q+1)}.
% \end{cases}
% \]
%\eqdef I^{(q)} \sqcup I^{(q+1)}_{\rm new} 

%\comment{DESSIN?}

Notice that by construction $\card I_{\rm on}^{(q+1)} =\nrep$.

The weights are updated with the following rule:
\begin{equation}\label{eq:weight_AMS}
  \begin{cases}
    G^{(n,q+1)} \eqdef  G^{(n,q)}  & n \in I_{\rm off}^{(q+1)} \\
G^{(n,q+1)} \eqdef  \frac{ \nrep - K^{(q+1)} }{ \nrep }G^{(n,q)} & n \in   I_{ {\rm on}, > Z^{(q)}}^{(q)} \\
G^{(n,q+1)}  \eqdef G^{(P^{(q+1)}(n),q+1)}
%\eqdef \frac{ \nrep - K^{(q+1)} }{ \nrep }G^{(P^{(q+1)}(n),q)} 
& n\in I_{\rm new}^{(q+1)}.
  \end{cases}
  \end{equation}
Observe that for $n\in I_{ {\rm on}}^{(q+1)}$, \[G^{(n,q+1)}=\frac{\nrep-K^{(q+1)}}{\nrep}\frac{\nrep-K^{(q)}}{\nrep}\ldots \frac{\nrep-K^{(1)}}{\nrep}\frac{1}{\nrep}.\]
%while the weights for replicas with labels in  $I_{ {\rm off}}^{(q+1)}$ are the same as in the previous iteration.
Moreover, the weight of a replica remains constant as soon as it is retired
(namely from the first iteration $q$ such that its label is in $I_{\rm off}^{(q+1)}$).

\end{enumerate}

\paragraph{The resampling step}

\begin{enumerate}[(i)]
\item Replicas in $I^{(q)}$ are not
  resampled: for $n\in I^{(q)}$, $X^{(n,q+1)}=X^{(n,q)}$.
\item For $n'\in I_{\rm new}^{(q+1)}$, $X^{(n',q+1)}$ is sampled according
  to the resampling kernel (defined in Section~\ref{sec:resampling_kernel}) $\pi_{Z^{(q)}}(X^{(P^{(q+1)}(n'),q)},dx')$. The new replica
  $X^{(n',q+1)}$ is thus obtained by branching its parent replica $X^{(P^{(q+1)}(n'),q)}$.
\end{enumerate}

\paragraph{The level computation step}

Compute the order statistics of $\p{ \Xi( X^{(n,q+1)}) }_{n \in
    I_{\rm on}^{(q+1)}}$, namely a bijective mapping
  $\Sigma^{(q+1)}:\left\{1,\ldots,n_{\rm rep}\right\}\rightarrow
  I_{\rm on}^{(q+1)}$ (we recall that $\card I_{\rm on}^{(q+1)}=\nrep$)  such that
\[
\Xi( X^{(\Sigma^{(q+1)}(1),q+1)}) \leq \ldots \leq \Xi( X^{(\Sigma^{(q+1)}(n_{\rm rep}),q+1)}) 
\]
and set the new level as the $k$-th order statistics: 
\begin{equation}\label{eq:Zq_AMS}
Z^{(q+1)} = \Xi( X^{(\Sigma^{(q+1)}(k),q+1)}).
\end{equation}
If $\card \left\{n \in
    I_{\rm on}^{(q+1)} :  \Xi( X^{(n,q+1)}) \leq Z^{(q+1)} \right\}=\nrep$ then set  $Z^{(q+1)} = +\infty$.

\paragraph{Increment}
Increment $q\leftarrow
q+1$, and go back to the stopping criterion step.

\bigskip

Notice that $Q_{\rm
  iter}$ is such that 
$$Q_{\rm iter}= \inf \{q \ge 0 : Z^{(q)} >  z_{\rm max} \}.$$
The number of times the loop consisting of the three steps (splitting
/ resampling / level computation) is performed is exactly $Q_{\rm
  iter}$.

If $Z^{(Q_{\rm
  iter})}=+\infty$, none of the working replicas at the
iteration $Q_{\rm iter}-1$ is above the new level $\Xi( X^{(\Sigma^{(Q_{\rm
  iter})}(k),Q_{\rm
  iter})})$ and thus, all of them would
have been declared retired at the iteration $Q_{\rm
  iter}$: this situation is referred to as extinction.

\begin{figure}[h!]
\centering
\includegraphics[scale=0.6]{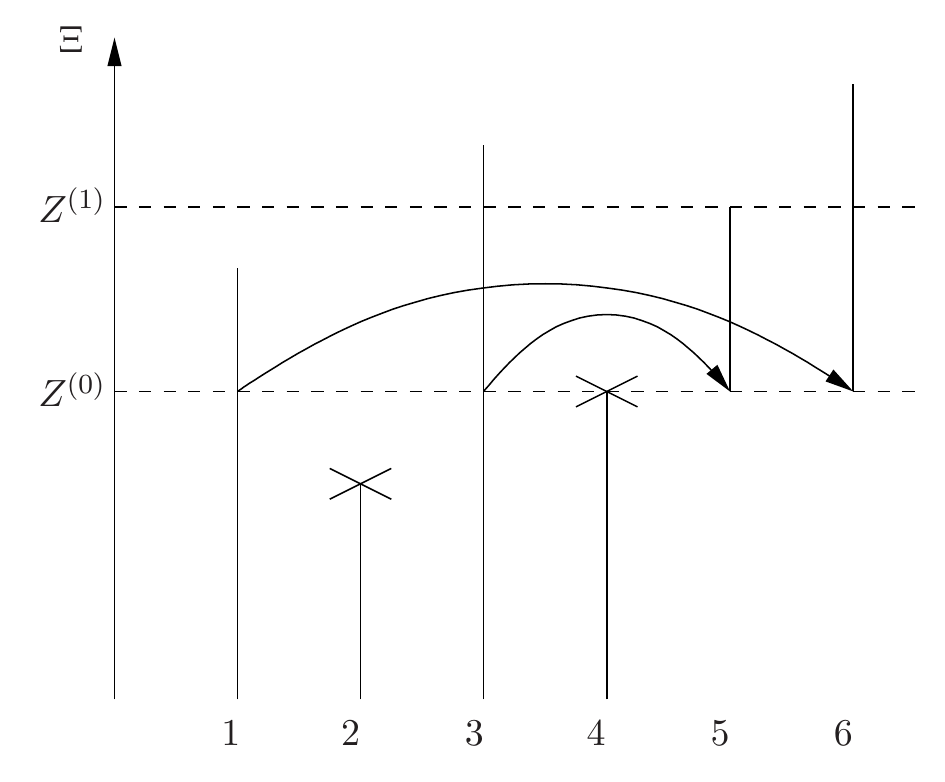}
\includegraphics[scale=0.6]{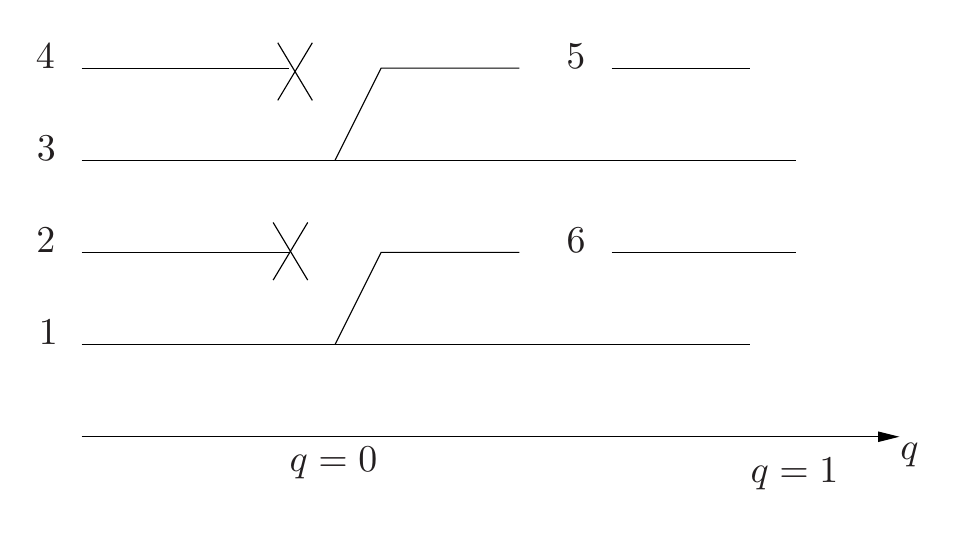}
\caption{Schematic representation of the first iteration of the AMS algorithm, with
  $\nrep=4$ and $k=2$. The replicas numbered 2 and 4 are declared retired at
  the first iteration, and are replaced by the replicas
  with label 5 and 6, which are respectively
  resampled from the replicas with labels 3 and 1.}
\label{Fig:AMS}
\end{figure}

\begin{Rem}[On the number of resampled replicas]\label{rem:same_max}
It is very important to notice that the number of resampled replicas
is at least $k$, but may be larger than $k$. In other words, at
iteration $q$, with the
above notation, $K^{(q+1)}$ is not necessarily equal to $k$. This
requires at least two replicas to have $Z^{(q)}$ as the maximum level at the
beginning of iteration~$q$. Actually, it may even happen that, in the
level computation step, all the
replicas in $I_{\rm on}^{(q+1)}$ have $Z^{(q+1)}$ as the maximum level, which  implies
extinction: $\card \left\{n \in
    I_{\rm on}^{(q+1)} :  \Xi( X^{n,q+1}) \leq Z^{(q+1)}
  \right\}=\nrep$, $Z^{(q+1)} = +\infty$ and the algorithm stops.

As an example, let us explain a three step procedure which leads two replicas to
have the same maximum level, which in addition is the minimum of the maximum levels over
all the working replicas, in the case $k=1$. 

Assume that in the splitting and resampling steps, the
three following events occur (see Figure \ref{Fig:Drifted_BM} for a
schematic representation):
\begin{enumerate}
\item One of the selected replica (referred to as $X$) has the
smallest maximum level among all the others in $I^{(q)}_{{\rm on},>Z^{(q)}}$.
\item The first time $\tT_{Z^{(q)}}(X)$ where this replica
goes beyond the current level $Z^{(q)}$ also corresponds to the
time at which this replica reaches its maximum level:
$$\xi\left(X_{\tT_{Z^{(q)}}(X)}\right)=\Xi(X).$$
\item By the
resampling procedure, the new replica (referred to as $Y$) which is generated (starting
from $X_{\tT_{Z^{(q)}}(X)}$ at time $\tT_{Z^{(q)}}(X)$, see the resampling kernel~\eqref{eq:piz_chain_1}--\eqref{eq:piz_chain}) is such that $\xi(Y_k) \leq
\xi(X_{\tT_{Z^{(q)}}(X)})=\Xi(X)$ for all $k > \tT_{Z^{(q)}}(X)$. Thus the new replica $Y$
has the same maximum level as the selected replica $X$: $\Xi(Y)=\Xi(X)$.
\end{enumerate}

\begin{figure}[h!]
\centering
%\begin{tabular}{cc}
%\includegraphics[width=7cm, height =5cm]{simus/fig1.pdf} & \includegraphics[width=7cm, height =5cm]{simus/fig2.pdf} \\
\includegraphics[scale=0.4]{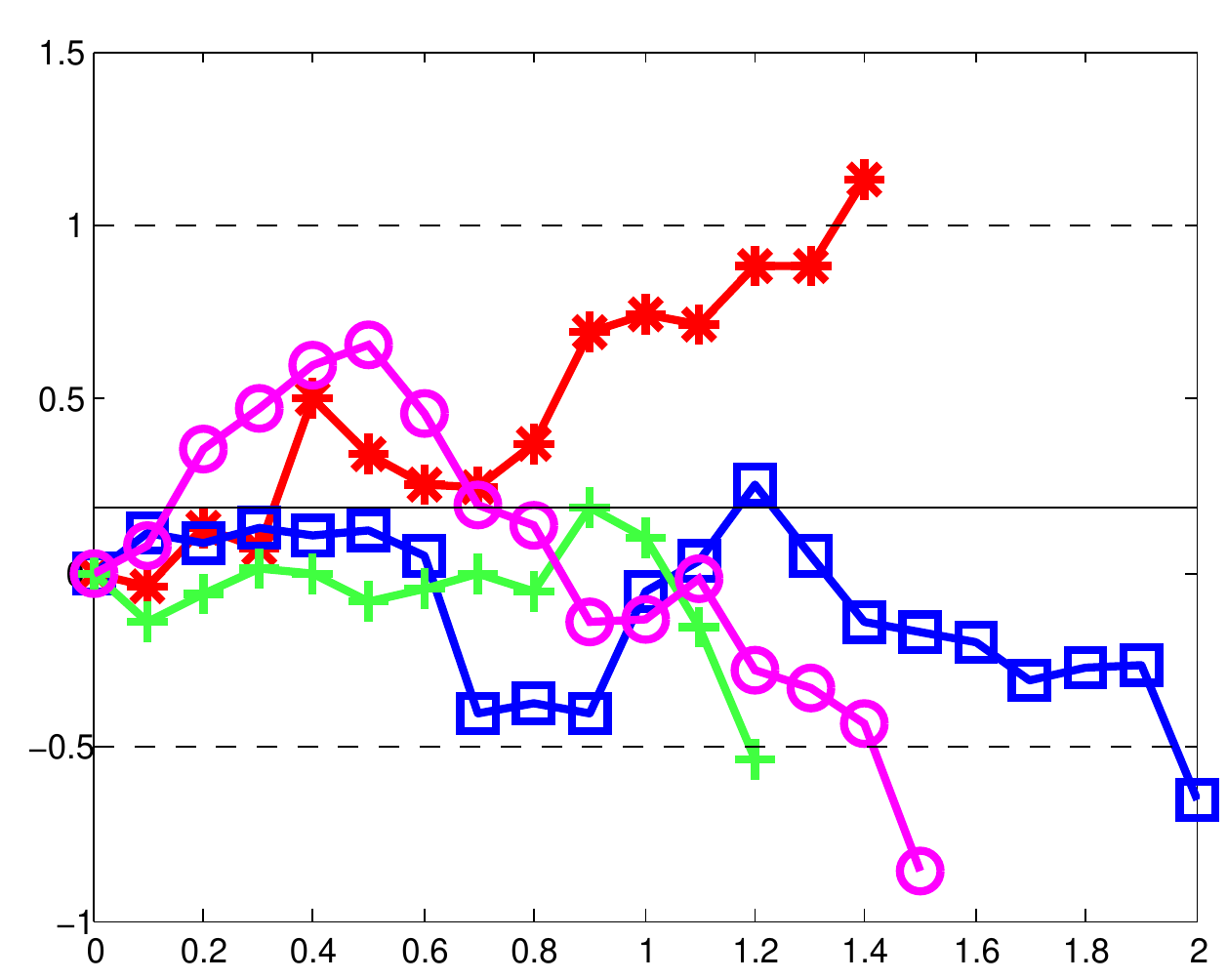}
\includegraphics[scale=0.4]{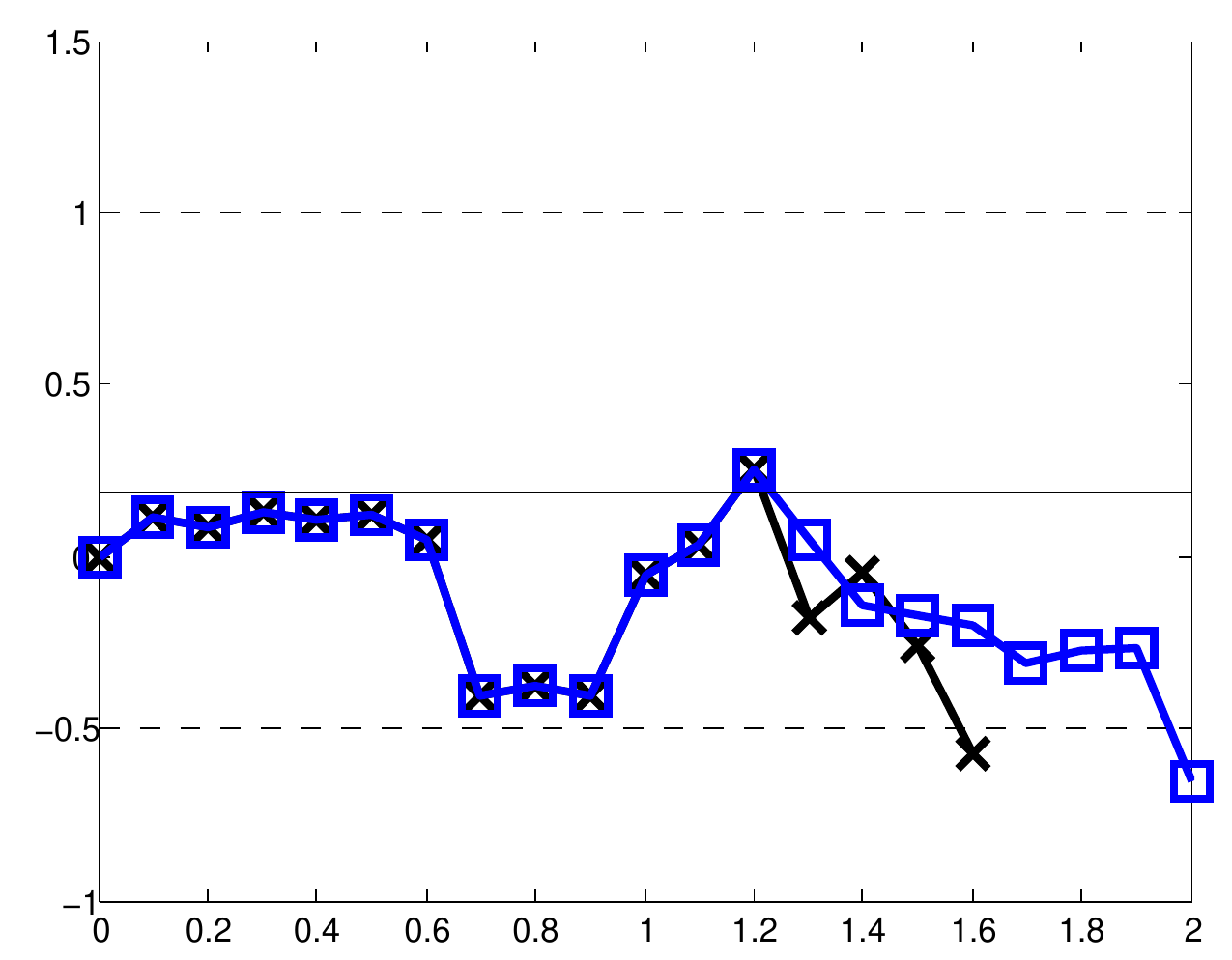}
%\end{tabular}
\caption{Schematic representation of a procedure leading to the equality of the maximum levels
  ($y$-axis) of two replicas. We represent the evolution of the level of
  each replica at discrete times ($x$-axis). Left: the current level
  $Z^{(q)}$ is the maximum level of the green-crosses replica. The
  blue-squares replica has been selected to be resampled. Right:
  action of the resampling, where the new black-crosses replica has
  the same maximum level as the selected replica. Here $A=\left\{x 
    : \xi(x)<-0.5\right\}$ and $B=\left\{x :\xi(x)>1\right\}$, and $z_{\rm max}=1$.%Application of the GAMS algorithm with $n=4, k=1$ and $a=0.5, b=1$. The dashed lines correspond to the two barriers. Upper left, one dynamics (blue crosses) of a drifted Brownian motion is drawn together with its maximum value (black line). Upper right, two more dynamics are plotted. One goes above $b=1$ while the other goes beyond $-0.5$. The ``blue'' dynamics has still the lower maximum. Lower left, a last particle runs (black crosses) and goes beyond $-0.5$. It has the lower maximum value among all the particles (black line). This one has to be re-sampled according to the GAMS algorithm. Lower right, a new particle (green crosses) is re-sampled from the ``blue'' and starts over beyond the level. As its trajectory is decaying along time, the two particles have now the same maximum. To avoid adding bias, the algorithm has to be modified to kill $2$ particles. Of course this situation is less likely to happen as the time step goes to zero.
}
\label{Fig:Drifted_BM}
\end{figure}

Then, at the next iteration, one obtains two replicas which have the
same maximum level, which is the minimum of the maximum levels over
all working replicas.
As a consequence, both replicas will be resampled at the next
iteration of the algorithm, even if $k=1$.

By iterating the procedure, one can thus obtain
many replicas having this same maximum level, or even all replicas
having the same maximum level (which leads to extinction).
%
%\comment{Other procedures can also lead to the equality of the maximum levels of two (or more) different replicas. For instance, when $k>1$, the first step in the procedure above can be removed: then the problem of the equalities of maximum levels will occur at some iteration, not the next one. If $k>1$, notice that it is enough to have $k+1$ replicas with the same maximum level, but it is not a necessary condition to imply that at least $k+1$ replicas have to be killed.}
%
%
Other similar procedures can lead to the equality of the maximum levels of two (or more) different replicas. For instance, even if the selected replica in the first step of the procedure above does not have the smallest maximum level among others (the first step is thus skipped), the two next steps will still create two replicas with the same maximum level. This implies that in some next iteration of the algorithm more than $k$ replicas will be declared retired and resampled.

The three events above have small probabilities especially if $\nrep$ is
large, or if the time-step size is small if one
thinks of the Markov Chain as the time discretization of a
continuous time diffusion process (such as~\eqref{eq:Lang}). But in
practice, over many iterations and many independent runs, such
situations are actually observed and must be taken into account
carefully in the definition of the algorithm. In Section~\ref{sect:BrownDrift}, we investigate in a simple test case
the phenomenon described in this remark, and we
illustrate the importance of a proper implementation of the splitting
and resampling steps in such situations, in order to obtain unbiased estimators.
\end{Rem}

\subsection{The AMS estimator}\label{sec:AMS_estimator}
For any bounded observable $\ph: \calP \to \R$, a realization of the above
algorithm gives an estimator of the average $\pi(\ph)$ (see~\eqref{eq:phi}) defined by:
\begin{equation}\label{eq:phi_hat}
\hat{\ph}\eqdef \sum_{n \in I^{(Q_{\rm iter})}} G^{(n,Q_{\rm iter})}
\ph(X^{(n,Q _{\rm iter} )} ).
\end{equation}
One of the main aim of this paper is to prove that this estimator is
unbiased (see Theorem~\ref{th:unbiased} below):
\[\E(\hat{\ph})=\pi(\ph).\]
In order to highlight the main features
which make the estimator unbiased, we will actually prove this result for
a larger class of models and algorithms introduced
in Section~\ref{sect:GAMS}.
%  The interest of this generalization is
% also to introduce possible variants in the different steps.

A particular choice of interest for some applications is
$\ph(x)=\mathds{1}_{\tT_B(x)<\tT_A(x)}$, in which case one obtains
an unbiased estimator of the probability $p=\PP(\tau_B<\tau_A)$. In this
case, due to the assumption~\eqref{eq:hyp_B} on $B$, only replicas
with labels in $I^{(Q_{\rm iter})}_{\rm on}$  contribute to the
  estimation, and thus, from one iteration to the other, only replicas
  with labels
  in $I^{(q)}_{\rm on}$ have to be retained, namely a system of
  $\nrep$ replicas. For this specific observable
  $\ph(x)=\mathds{1}_{\tT_B(x)<\tT_A(x)}$, the estimator of $p=\PP(\tau_B<\tau_A)$ is denoted
 by  $\hat{p}$ and is defined from~\eqref{eq:phi_hat} as:
\begin{align}
\hat{p}&=\sum_{n\in I^{(Q_{\rm iter})}_{\rm on}}G^{(n,Q_{\rm iter})}\mathds{1}_{\tT_B(X^{(n,Q_{\rm iter})})<\tT_A(X^{(n,Q_{\rm iter})})} \nonumber \\
&=\frac{\nrep-K^{(Q_{\rm iter})}}{\nrep}\ldots \frac{\nrep-K^{(1)}}{\nrep} P_{\rm corr} \label{eq:p_hat}
\end{align}
where the so-called ``corrector term'' is given by
\begin{equation}\label{eq:Pcorr}
P_{\rm corr} =\frac{1}{\nrep}\sum_{n\in I^{(Q_{\rm
        iter})}_{\rm on}}\mathds{1}_{\tT_B(X^{(n,Q_{\rm iter})})<\tT_A(X^{(n,Q_{\rm iter})})}
\end{equation}
namely the proportion of working replicas that have reached $B$
before $A$ at the final iteration. The properties of this estimator
will be numerically investigated in Section~\ref{sect:num}.

\section{Generalized Adaptative Multilevel Splitting}\label{sect:GAMS}

In this section, we introduce a general framework for adaptive multilevel
splitting algorithms, which contains in particular the AMS algorithm of
Section~\ref{sec:Markov}. We refer to this framework as the
Generalized Adaptive Multilevel Splitting (GAMS) framework. In particular, we prove  in Section~\ref{sec:justif_Markov} that
the AMS algorithm of Section~\ref{sec:Markov} fits in the GAMS
framework. The
interest of this abstract presentation is twofold. First, it highlights the essential
mathematical properties that are required to produce unbiased
estimators of quantities such as~\eqref{eq:phi}. 
 As will be
proven in Section~\ref{sec:unbiased}, any algorithm which enters into the GAMS
framework yields unbiased estimators of quantities such as~\eqref{eq:phi}. Second, it is very useful to
propose variants of the classical AMS algorithm which still
yield unbiased estimators: we propose some of them in Section~\ref{sec:variants}.

The section is organized as follows. In Section~\ref{sec:gen_set}, we
introduce in a general setting the quantities we are interested in
computing, and the main ingredients we need to state the GAMS framework. 
In Section~\ref{sec:GAMS_def}, the GAMS framework is presented.
In Section~\ref{sec:justif_Markov}, we prove that the AMS algorithm
introduced in Section~\ref{sec:Markov}  for the sampling of paths of
Markov chains enters into the GAMS framework. There is a strong
analogy between GAMS and Sequential Monte Carlo algorithms (the
branching step and the resampling step below correspond respectively
to the so-called selection and mutation steps): this is made precise
in Section~\ref{sec:SMC}.
Finally, we propose in Section~\ref{sec:variants} some variants of the
classical AMS algorithm to illustrate the flexibility of the GAMS
framework.

\subsection{The general setting}\label{sec:gen_set}

In this section, we introduce the ingredients and the main assumptions
we need in order to introduce the GAMS framework.
Throughout this section, the notations are consistent with those used in the context of the AMS algorithm. \comment{TONY: OK ?}

%The interest of this general point of view is that it
%leads to natural generalizations of the classical AMS algorithm (see
%Section~\ref{sec:Markov}) and it highlights the crucial properties needed
%to prove the unbiasedness of the estimators associated with the AMS
%algorithm.

Let $(\Omega,\calF, \P)$ be a probability space. Let us introduce the state space
$\calP$, which is assumed to be a Polish space and let us denote $\calB(\calP)$ its Borel $\sigma$-field.
For example, in Section~\ref{sec:Markov}, the state space is the
path space of Markov chains (see~\eqref{eq:P_AMS}).
Let $X$ be a random variable with 
values in $(\calP,\calB(\calP))$ and probability distribution
\[
\pi  = \P \circ X^{-1} \in \Proba(\calP).
\]
The aim of the algorithms we present is to estimate
\begin{equation}\label{eq:pi(phi)}
\pi(\varphi)=\int_{\calP} \varphi(x) \pi(dx)
\end{equation}
for a given bounded measurable observable $\varphi: \calP \to \R$.

%\added{J'ai ajoute ici l'hypothese que $(\filt^X_z)_{z \in \R}$ est continue a droite et complete.
%C'est une des hypotheses du theoreme d'arret de Doob et
%ca impliquera aussi que n'importe quelle martingale par rapport a cette filtration a une modification continue a droite 
%(theoreme de regularite de Doob). C'est tt ce dont on a besoin dans la preuve et du coup on pourrait enlever la seconde hypothese sur $\pi_z$ (que j'ai laisse pour le moment).}

The two main ingredients we need in addition to $(\calP,\calB(\calP),\pi)$ is a filtration on
$\p{\calP,\calB(\calP)}$ (from which we build filtrations on
$\p{\calP^{\rm rep},\calB(\calP^{\rm rep})}$ and on $(\Omega,\calF,
\P)$) and some probability kernels $\pi_z(x, \cdot)$ from $\R \times
\calP$ to $\calP$. Let us introduce them in the next two sections.

\subsubsection{The filtrations}

In order to define the GAMS framework, we need an additional structure
on $\p{\calP,\calB(\calP)}$, namely a filtration indexed by real
numbers that we call in the following ``levels''. Therefore, we assume
in the following that the space $\p{\calP,\calB(\calP)}$ is endowed with a filtration indexed by levels $z\in \R$
%To any $\calP$-valued random variable $X$, we assume than we can associate a filtration of the probability space
\begin{equation}\label{eq:filtration}
\p{\filt_z}_{z \in \R} \subset \calB(\calP),
\end{equation}
namely a non-decreasing family of $\sigma$-fields: for any $z < z'$, $\filt_z \subset \filt_{z'} \subset \calB(\calP)$.
%\begin{Ass}\label{ass:filtration}
%\end{Ass}
For example, in the context of Section~\ref{sec:Markov}, the filtration is defined
as follows: for any $z \in \R$, $\filt_z$ is  the smallest $\sigma$-field on
$\calP$ which makes the application $x \in \calP \mapsto (x_{t \wedge
  \tT_z(x)})_{t \ge 0} \in (\cal P,\calB(\calP))$ measurable.
%
%By convention, we set $\filt_{-\infty}=\left\{\emptyset,\calP\right\}$. \comment{utile?}
%

From the filtration given by~\eqref{eq:filtration}, we
construct a filtration $\p{\filt_z^{\rm rep}}_{z\in \R}$ on the space
of replicas $\p{\calP^{\rm rep},\calB(\calP^{\rm rep})}$ (defined
by~\eqref{eq:def_Erep}) by considering the disjoint union of the
filtration $\filt_z$
on~$\calP$:
$$\filt_z^{\rm rep}=\bigsqcup_{ I \in {\mathcal I} }
\bigl(\filt_z\bigr)^{\card I}$$
where, we recall, $\mathcal{I}$ denotes the ensemble of finite subsets of
$\N^\ast$.
%  for any $z\in \R$, 
% $$\filt_z^{\rm rep}=\bigsqcup_{ m \in \N } \bigl(\filt_z\bigr)^m.$$

% \comment{Tony: Definition de $\p{\filt_z^{\rm rep}}_{z\in
%     \R}$  a discuter}

%\comment{Le fait que la filtration soit standard ou pas: une hypoth\`ese plus bas.}

%
%\begin{Rem}
%In Section~\ref{sec:Markov}, we considered the process $X= (\Omega,\calF, X_{t\wedge T_A}, \P)$ where $X_{t\wedge T_A}$ is a stopped Markov chain taking values in $(\calP,\calB(\calP))$ and $\pi$ was its distribution determined by its dynamics. Moreover we considered $\filt_{z}^{X}=\sigma\left(X^{\tau_z}\right)$. 
%\end{Rem}
%

%\comment{J'ai enlev\'e une remarque qui reliait au cas Markov.}

%The $\sigma$-algebra $\filt_{z}^{X}$ may be considered as the pull-back by $X$ of a given explicit filtration $(\filt_z)_{z \in \R}$ of $\calB(\calP)$ {\it i.e.} $\filt_{z}^{X}=X^{-1}\left(\filt_z\right)$.

%For instance in the Markov chain case, $\filt_z$ may be defined for any $z \in \R$ as the smallest $\sigma$-field on $\calP$ which makes the application $x \in \calP \mapsto x_{t \wedge \tau_z(x)} \in (\cal P,\calB(\calP))$ measurable.

%\comment{J'ai enlev\'e l'exemple Markov. Par contre j'ai l'impression qu'on se ram\`ene en fait \`a ce cas \ a un endroit de la preuve pour calculer une esp\'erance conditionnelle.}

For any random variable $X:(\Omega,\calF)\rightarrow
\p{\calP,\calB(\calP)}$, we define a filtration
$\p{\filt_z^X}_{z\in\R}$ on the probability space by pulling-back the
filtration $\p{\filt_z}_{z\in\R}$:
\begin{equation}\label{eq:filtration_X}
\filt_z^X=X^{-1}\bigl(\filt_z\bigr).
\end{equation}

If $\calX = \p{X^{(n)}}_{n \in I} \in \calP^{\rm rep}$ denotes a
random system of replicas -- {\it i.e.} a random variable $\calX:(\Omega,\calF)\rightarrow \p{\calP^{\rm rep},\calB(\calP^{\rm rep})}$ -- we also define the filtration $\p{\filt_{z}^{\calX}}_{z\in \R}$ by the same pulling-back procedure:
$$\filt_z^\calX=\calX^{-1}\bigl(\filt^{\rm rep}_z\bigr).$$

% \comment{Tony: je prefere
% For any random replica system $\calX = \p{X^{(n)}}_{n \in I} \in \calP^{\rm rep}$ (see \eqref{eq:def_Erep}), for any level $z\in \R$ the $\sigma$-field
% \[\filt_z^{\calX} := \bigsqcup_{ m \in \N } \filt_z^{X^{(1)}} \otimes \ldots \otimes \filt_z^{X^{(m)}}.\] 
% is naturally defined as the disjoint union of product $\sigma$-fields generated by $\filt_z^{X^{(m)}}$. We obtain a filtration $\p{ \filt_z^{\calX}}_{z \in \R}$.
% }

%For any random replica system $\calX = \p{X^{(n)}}_{n \in I} \in \calP^{\rm rep}$ (see \eqref{eq:def_Erep}), for any level $z\in \R$ the $\sigma$-field
%\[\filt_z^{\calX} := \bigsqcup_{ m \in \N } \filt_z^{X^{(1)}} \otimes \ldots \otimes \filt_z^{X^{(m)}}.\] 
%is naturally defined as the disjoint union of product $\sigma$-fields generated by $\filt_z^{X^{(m)}}$. We obtain a filtration $\p{ \filt_z^{\calX}}_{z \in \R}$.

%\comment{Bizarre: $X^{(m)}$ n'a pas forc\'ement de sens pour tout $m$. Possible solution: ok dans le cadre de la remarque pr\'ec\'edente.}

%\footnote{Here again, $\filt_z^{\calX}$ can be defined by pulling-back the $\sigma$-field $\filt_z$ on $\calP$ using the replica system $\calX$.} 
% i.e.

% on n'a pas besoin de continuité sur les filtrations. on peut donc
% mettre des conventions tant que c'est consistant.

By convention, we set
$$\filt^{\chi}_{-\infty}=\sigma(I) \quad \text{and} \quad \filt^{\chi}_{+\infty}=\calF,$$
where $\sigma(I)$ is the $\sigma$-field generated by the random set of labels~$I$. As a consequence  for any $z\in \R$ we
have $\filt_{-\infty}^{\calX} \subset\filt_{z}^{\calX} \subset \filt_{+\infty}^{\calX}$.

% \comment{TONY: discuter cette convention. Je serais d'avis (pour des
%   raisons de continuité des martingales définies ensuite) de poser
%   $\filt_{-\infty}=\cap_{z \in \R} \filt_z$ et
%   $\filt_{+\infty}=\cup_{z \in \R} \filt_z$ (si c'est bien défini ?)
%   Idem pour les $\filt^{\calX}_{-\infty}=\cap_{z \in \R} \filt^{\calX}_z$ et
%   $\filt^{\calX}_{+\infty}=\cup_{z \in \R} \filt^{\calX}_z$.
%   Et de signaler ensuite que $\sigma(I) \subset \filt_{-\infty}^{\calX}$ ?}

We finally introduce the notion of stopping level, which is simply a
reformulation of the notion of stopping time in our context where the
filtrations are indexed by levels instead of times.
  \begin{Def}[Stopping level, Stopped $\sigma$-field]\label{def:stopping_level}
    Let $\p{ \calF_z}_{z \in \R}$ be a filtration on $(\Omega,\mathcal{F},\P)$.
A stopping level $Z$ with respect to $\p{
      \calF_z}_{z \in \R}$ is a random variable with values in $\R$ such that $\set{ Z
      \leq z} \in \calF_z $ for any $z \in \R\cup \{-\infty,+\infty\}$.
 The stopped
    $\sigma$-field, denoted by $\calF_Z$, is characterized as follows:    
    $$A \in \calF_Z \quad  \text{ if and only if } \quad  \forall z \in \R, \, A \cap \set{ Z \leq z}   \in \calF_z .$$
    In particular, $Z$ is a $\calF_Z$-measurable random variable.
  \end{Def}

\begin{Rem}[On the definition of the filtrations]
In many cases of practical interest, for any~$z \in \R$, $\filt_z$ is defined as the smallest filtration which
makes an application $F_z: \calP \to (\calE,
\calB(\calE))$ measurable, for some Polish space $\calE$. Then, $\filt^{\rm rep}_z$ is the smallest filtration which
makes the application $G_z: \calP^{\rm rep} \to \calE^{\rm
  rep}$ measurable with $G_z( (X^{(n)})_{n \in I}) = 
(F_z(X^{(n)}))_{n \in I}$. For
example, in the setting of Section~\ref{sec:Markov}, $F_z: \calP \to \calP$ and $F_z(x)=(x_{t \wedge \tT_z(x)})_{t
  \ge 0}$.
\end{Rem}
% \comment{Tony:
% Alternative: depuis le début, on défini $\filt_z$ comme la plus petite
% filtration qui rend une application $F_z: \calP \to (\calE,
% \calB(\calE))$ mesurable, pour $\calE$ un polonais. Par
% exemple, sur l'espace des chemins cas Markov, $F_z: \calP \to \calP$ et $F_z(x)=(x_{t \wedge \tau_z})_{t
%   \ge 0}$. Ensuite, on défini  $\filt^{\rm rep}_z$ comme la plus petite
% filtration qui rend l'application $G_z: \calP^{\rm rep} \to \calE^{\rm
%   rep}$ mesurable avec $G_z( (X^{(n)})_{n \in I}) = (
% (F_z(X^{(n)}))_{n \in I})$. Cela pose la question: est-ce que ce serai
% équivalent à ce qu'on a écrit ? }

%\comment{Une confusion de notation sur $\calP^{\rm rep}$.}

%Note that for any $z \in \R$, $\filt_z^{\calX} $ always contains $\sigma(I)$ the $\sigma$-field generated by the random set of labels.

\subsubsection{The resampling kernels $\pi_z(x, \cdot)$}\label{sec:resampling}

The second ingredient we need in addition to the filtrations
introduced above is a transition probability kernel from $\R \times
\calP$ to $\calP$ ($\R \times \calP$ being endowed with the Borel
$\sigma$-field $\calB(\R \times \calP)$):  $(z,x) \in \R \times
\calP
   \mapsto  \pi_z(x, \cdot) \in \Proba(\calP) $. 
By convention, for any $x\in \calP$ , we set
$\pi_{-\infty}(x,\cdot)=\pi$ (which is consistent with
Assumption~\ref{ass:key_2} below) and $\pi_{+\infty}(x,\cdot)=\delta_{x}$ . For an explicit example of a resampling kernel in the Markov chain example of Section~\ref{sec:Markov}, we refer to Section~\ref{sec:resampling_kernel}.

This kernel is  used in the resampling step as a family of transition
probabilities from $\calP$ to $\calP$, indexed by the level $z$. For a given level $z \in \R$ and a given state $x \in \cal P$,
$\pi_z(x, d x')$ is the probability distribution of the resampling of
the state $x$ from level $z$. In the following,
we will refer to this transition probability kernel as a resampling
kernel, since it is used in the resampling step.

%  or more explicitly as a measurable map
% \[
% %\boxed{
% \carray{
% [0,1] \times S & \to & \Proba(S) \\
% (z,x) & \mapsto & \pi_z\p{dx',x } .
% }
% %}
% \]
% where $\Proba(S)$ is Polish with usual weak convergence topology. 
% **** Ceci sont des notations/definitions d'interet general. Je ne pense pas que c'est le bon endroit. JE SUIS CHAUD POUR UNE SECTION QUELQUE PART AU DEBUT QUI RECAPITULE CE GENRE DE NOTATIONS}

%

 % all other random variables and $f: S \times [0,1] \times [0,1] \to S$ is a measurable map that explicitly define $\pi_z$ (i.e. $f(x,z,U)\sim \pi_z(dx',x)$ for any $x\in S$ and $z\in[0,1]$). In particular in the above, $U$ is independent of $(X,Z) \in S \times [0,1]$.
% We also assume that 
% \[
% \pi_0 = \pi
% \]
%  and - merely by convention - that 
% \[
% \pi_1 = \Id
% \]

\subsubsection{Assumptions on $(\filt^X_z)_{z \in \R}$ and $(\pi_z)_{z \in \R}$.}

We will need two assumptions on $(\filt^X_z)_{z \in \R}$ and
$(\pi_z)_{z \in \R}$.  The first assumption states a right continuity property of the mapping $\pi_z(\phi)(x)$ with respect to $z$ and is required to apply the Doob's optional stopping theorem in the proof of Lemma~\ref{lem:Doob}.%The first one is needed in the proof of a key auxiliary result below (Lemma~\ref{lem:Doob}), where we use Doob's optional stopping theorem. The required right-continuity property is ensured by the following assumption.% \added{on peut se contenter de celle sur la filtration}
\begin{Ass}\label{ass:key_2}
  For any $x \in \calP$, and any continuous bounded test function
  $\ph:\calP \to \R$,
\[
\left\{
\begin{aligned}
&\R \to \R\\
&z \mapsto \pi_z(\ph)(x)
\end{aligned}
\right.
\]
is right-continuous. Moreover, $\lim_{z \to -\infty}
\pi_z(\ph)(x)=\pi_{-\infty}(\ph)(x)=\pi(\ph)$.
\end{Ass}
Recall the notation introduced in~\eqref{eq:pi_phi}: $\forall z \in
\R$, $\forall x \in
\calP$, $\pi_{z}(\ph)(x) =\int_{ y \in \calP} \varphi(y) \pi_z(x,dy)$.

%\comment{Finalement: cad-lag?}

% NB: The assumption (i)-(ii) above can be considered a stronger version of the fact (implied by this assumption) that $\pi_z$ is a realization of expectations under $\pi$ conditionally on $\filt_z$:
% \[
% \E_\pi \p{ \ph(X) \st \filt_z^X } = \pi_z(\ph, X ) \as.
% \] 

Second, we require a consistency relation between the filtration $(\filt^X_z)_{z \in \R}$ and the transition probability kernel $(\pi_z)_{z \in \R}$. 
%
%\begin{Def}[Regular conditional distributions and probabilities]
%Let $(\Omega,\calF,\P)$ be a probability space, $\cal{G} \subset \cal{F}$ and $X$ be a process from  $(\Omega,\calF,\P)$ into $(\calP,\calB(\calP))$ as defined above. A family of probability distributions
%on $(\calP,\calB(\calP))$, denoted by $(\pi(\omega,\cdot))_{\omega \in \Omega}$, is called a regular conditional distribution of $X$ given
%$\calG$ if for each $A \in \calB(\calP)$, $\pi(\omega,A) = \P(X\in A | \calG) = \E[1_A (X)|\calG]\as$.
%\end{Def}
%
%
%\comment{REECRIRE L'HYPOTHESE SUIVANTE.}
%
\begin{Ass}\label{ass:key_1}
Let us consider a random variable $X$, $(\filt_{z}^X)_{z\in\R}$ and $(\pi_z)_{z \in \R}$
as introduced above. We assume the following consistency relation:
 if $X$ is distributed according to 
$\pi_z(x,\cdot)$ for some $(z,x) \in \R \times \calP$, then for any $z' \ge z$ and for any bounded measurable test function $\ph: \calP \to \R$,
$$\E \p{ \ph(X) \st \filt_{z'}^X } = \pi_{z'}(\ph)(X)  \as$$
\end{Ass}
%
% is a regular conditional distribution of $X$ given $\filt_{z}^X$ i.e. 
%  if for each $A \in \calB(\calP)$, $\pi_{z}(x,A) = \P(X\in A | \filt_{z}^X) = \E[1_A (X)|\filt_{z}^X]\as$
%
%
% In particular, by choosing $z=-\infty$ is the previous assumption,
% $\pi_{z}$ can be understood as a conditional distribution of
% $\pi$ conditionally on $\filt_{z}^X$ in the following sense: for $X$
% distributed according to $\pi$,
% $$\pi_z(\phi)(X)=\E \p{ \ph(X) \st \filt_{z'}^X }$$
%
As a consequence (by letting $z \to -\infty$ in the previous assumption), if $X$ distributed according to $\pi$, then for any
$z' \in \R$ $\pi_{z'}(X,\cdot)$
is a version of the law of $X$ conditional on $\filt_{z'}^X$. Therefore, the $\sigma$-field
$\filt_{z'}^X$ can be interpreted as containing all the information on a
replica $X$ necessary to perform the resampling with $\pi_{z'}(X,\cdot)$ from $X$ at a given level $z' \in \R$.

Let us finally mention that in addition to these two assumptions and
from a more practical point of view, it is also implicitly assumed that it is
possible to sample
according to the probability measure $\pi$ (step (ii) of the
initialization step below) and according to the probability distribution
$\pi_z(x,\cdot)$, for any $x\in \calP$ and $z\in\R$ (step (ii) of the resampling
step below).

We will check in Section~\ref{sec:justif_Markov} that the Markov chain
example of Section~\ref{sec:Markov} enters into the general setting
introduced in this section.

We are now in position to introduce the GAMS framework in the following section.

\subsection{The Generalized Adaptive Multilevel Splitting framework}\label{sec:GAMS_def}

The aim of this section is to introduce a general framework  for splitting algorithms (which we
refer to as the Generalized Adaptive Multilevel Splitting (GAMS)
framework in the sequel).
%
%  Many algorithms enter this general framework
% and in particular the AMS algorithm for Markov chains presented in section~\ref{sec:Markov}. 
%
% Indeed, we do not precise the way some of the random variables are sampled and some other quantities are computed: to obtain a fully-implementable algorithm, one needs the precision of extra rules.
%
The structure of the GAMS framework described in this section is quite similar to the one for the AMS algorithm of Section~\ref{sec:Markov}. One important difference is the introduction of a family of filtrations in the general setting. 
It iterates over three successive steps: (1) the
branching or splitting step, (2) the resampling step and (3) the level computation step. 
These steps are performed until a suitable stopping criterion is
satisfied.

We denote by $Q_{\rm iter}$ the number of iterations, which
in general is a random variable. At each iteration
step $q \geq 0$ of the algorithm the distribution
$\pi $ is approximated by an empirical distribution over a system
of weighted replicas $\calX^{(q)} := (X^{(n,q)},
  G^{(n,q)} )_{ n \in I^{(q)}} \in \calP^{\rm rep}$, where $I^{(q)}
  \subset \N^\ast$
  is the (random) finite set of labels at step $q$ of the algorithm
  and $G^{(n,q)} \in \R_+$ is the (random) weight attached to the replica $X^{(n,q)}$.

As it will become clear, in order to obtain a fully implementable algorithm from the GAMS
framework, three procedures need to be made precise (i) the stopping
criterion, (ii) the computation rule of the branching numbers and
(iii) the computation of the stopping levels.  These procedures
require to define three sets of random variables: $(S^{(q)})_{q \ge 0}$,
$(B^{(n,q+1)})_{q \ge 0,n \in
    I^{(q)}})$ and $(Z^{(q)})_{q \ge 0}$, that are used in the GAMS
  framework presented in the next section~\ref{sec:GAMS_precise_def}. The precise assumptions on these random variables
  will be stated in Section~\ref{sec:GAMS_to_algo} (see Assumption~\ref{ass:algo} below).
As already mentioned
above, the AMS algorithm of Section~\ref{sec:Markov} corresponds to 
specific choices of these three items, but the GAMS framework allows for
many variants (see Section~\ref{sec:variants}). The estimator associated with the GAMS framework is finally
defined in Section~\ref{sec:GAMS_estimator}.

\subsubsection{Precise definition of the GAMS framework}\label{sec:GAMS_precise_def}

We now introduce the Generalized Adaptive Multilevel Splitting (GAMS)
framework, which is an iterative procedure on an integer index $q \ge 0$.
\paragraph{The initialization step ($q=0$)}
\begin{enumerate}[(i)]
\item Define the initial set of labels $I^{(0)}=\{1,
  \ldots, \card I^{(0)} \} \subset \N^\ast$, where $\card I^{(0)}$ is assumed to be positive and finite.
\item Let $(X^{(n,0)})_{n \in I^{(0)}}$ be a sequence of $\calP$-valued i.i.d. random variables, and distributed according to the probability measure $\pi$. %\comment{On doit donc savoir simuler selon $\pi$}
\item Initialize uniformly the weights: for any $n \in  I^{(0)}$ set
  $G^{(n,0)} \eqdef 1/ \card I^{(0)}$.
%\item Set the sampling rules $\mu_U$, $\mu_B$, $\mu_Z$ and $\mu_S$ such that 
%\begin{enumerate}
%\item the random variables $U_l^{(0)}\sim \mu_U$, %\added{Je ne comprends plus l'utilite de cette va. A part dans la filtration elle n'apparait nulle part ailleurs.}%, and branching step respectively.
%\item the i.i.d sequence of integer valued branching numbers $(B^{(n,1)})_{n\in I^{(0)}}$ is distributed according to $\mu_B$,
%\item the real valued random level $Z^{(0)} \sim \mu_Z$,
%\item the stopping criterion random variable $S^{(0)} \in \left\{0,1\right\}$ is distributed according to $\mu_S$. 
%\end{enumerate}
%
%\comment{Pas clair du tout. Parler de ces r\`egles \`a l'ext\'erieur de l'algo?}

% \item Initialize the filtration:
% \[
% \calF^{(0)} \eqdef \sigma \p{I^{(0)}}.
% \]
\item Define the system of weighted replicas $\calX^{(0)} = (G^{(n,0)}, X^{(n,0)})_{n \in I^{(0)}}$ and for any $z\in\R$, define the $\sigma$-field of events %on $\Omega$
$
\calF^{(0)}_z \eqdef  \filt_z^{\calX^{(0)}}. %\vee \sigma(U^{(0)}_{l}).
$
\item Sample the initial level $Z^{(0)}$ (it is assumed to be a $(\calF^{(0)}_z )_{z \in \R }$ -- stopping level).
\item Define the $\sigma$-field of events 
$
\pcalF{0} = \calF^{(0)}_{Z^{(0)}}.
$
\end{enumerate}
\paragraph{Iteration}
Iterate on $q\geq 0$, while the stopping criterion is not satisfied.

\paragraph{The stopping criterion}
Sample the random variable $S^{(q)}\in\left\{0,1\right\}$ (which
is assumed to be $\pcalF{q}$-measurable). If $S^{(q)}=0$ then the algorithm stops and we set
$
Q_{\rm iter} = q.
$
%In other words, the algorithm is stopped at a random iteration $Q_{\rm iter} \geq 0$, such that $Q_{\rm iter}$ is a stopping time (or iteration) for the filtration $\p{\pcalF{q}}_{q \geq 0}$.
Otherwise, if $S^{(q)}=1$, the three following steps are performed.

\paragraph{The splitting (branching) step}%, stage $q \geq 1$}
\begin{enumerate}[(i)]

%\item Introduce a new set  $I^{(q+1)}_{\rm new} = \set{\card(I^{(q)})+1, \ldots , \card(I^{(q)})+K^{(q+1)}}  \in \N^\ast \setminus I^{(q)}$ of labels for the new replicas sampled at iteration $q$.

%
%
%\item Define the mapping $P^{(q+1)}:
%I_{\rm
    %new}^{(q+1)}\rightarrow I_{{\rm on}, >Z^{(q)}}^{(q)}$
    %as follows: 
%$\Bigl(P^{(q+1)} \left( \card(I^{(q)})+\ell \right) \Bigr)_{1\leq
  %\ell\leq K^{(q+1)}}$ are $K^{(q+1)}$ random labels independently and uniformly distributed in $I_{ {\rm on}, > Z^{(q)}}^{(q)}$.
%
%This mapping associates to the label of a new replica the label of its parent. The parent replica (with label in $I_{{\rm on}, >Z^{(q)}}^{(q)}$) is used in the resampling procedure to create the new replica (with label in $I_{\rm new}^{(q+1)}$).

\item Conditionally on $\calF^{(q)}$, sample the $\N$-valued random branching numbers $\p{B^{(n,q+1)}}_{n\in I^{(q)}}$ which are assumed to satisfy: for any $n\in I^{(q)}$
\[\E \p{B^{(n,q+1)} \st \pcalF{q}} > 0 \as \] 
The random variable $B^{(n,q+1)}$ represents the number of offsprings of the replica $X^{(n,q)}$.
If $B^{(n,q+1)}\geq 1$, the replica $X^{(n,q)}$ will be split into
$B^{(n,q+1)}$ replicas: the old one (parent) $X^{(n,q)}$ with label $n\in
I^{(q)}$ and, if $B^{(n,q+1)}> 1$, 
$B^{(n,q+1)}-1$ new ones (children) that are defined in the resampling step below.
If $B^{(n,q+1)}=0$, the replica is removed from the system. Let us
thus introduce the set of labels of such replicas: $I^{(q+1)}_{\rm killed}=\left\{n\in I^{(q)} : B^{(n,q+1)}=0\right\}$.

\item Compute the total number of new replicas $K^{(q+1)} = \sum_{n \in I^{(q)}} \max\{B^{(n,q+1)}-1,0\} $.

\item Introduce the set $I^{(q+1)}_{\rm new} = \set{\max I^{(q)} +1,
    \ldots , \max I^{(q)}+K^{(q+1)}}  \subset \N^\ast \setminus I^{(q)}$ for new labels and update the total set of labels 
\begin{equation}\label{set_of_labels}
I^{(q+1)} \eqdef \Bigl(I^{(q)} \setminus I^{(q+1)}_{\rm killed}\Bigr)  \sqcup I^{(q+1)}_{\rm new}\Bigr..
\end{equation}

\item
Set a children-parent map $P^{(q+1)}:I_{\rm new}^{(q+1)}\rightarrow I^{(q)}\setminus I^{(q+1)}_{\rm killed}$ such that for any $n\in I^{(q)}\setminus I^{(q+1)}_{\rm killed}$ we have
\[
\card \left\{n'\in I_{\rm new}^{(q+1)} : P^{(q+1)}(n')=n \right\}=B^{(n,q+1)}-1.
\]
This map associates to the label of a new replica the label of its
parent. The parent replica (with label $n \in I^{(q)}\setminus
I^{(q+1)}_{\rm killed}$) is used in the resampling procedure to create
the new replica with label $n' \in I_{\rm new}^{(q+1)}$, where $n$ and
$n'$ are related through the children-parent map by $P^{(q+1)}(n')=n$.
 Notice that this map is determined up to a permutation of $I_{\rm new}^{(q+1)}$.
For notational convenience, we extend the map to $I^{(q+1)}$ as follows: $P^{(q+1)}(n)=n$ for any $n\in I^{(q)}\setminus I^{(q+1)}_{\rm killed}$.

%\comment{Pas un probl\`eme si $B^{(n,q+1)}=0$?}

%\begin{equation}\label{PC_mapping}
%P^{(q+1)}(n') = \begin{cases}
%U \sim {\rm Uniform }(I^{(q)}) & {\rm if \ } n' \in I_{\rm new}^{(q+1)} \\
%n' & {\rm if \ } n' \in I^{(q)} 
%\end{cases}
%\end{equation}
%%I_{\rm
%    %new}^{(q+1)}\rightarrow I^{(q)}$
%    %as follows: 
%%$\Bigl(P^{(q+1)} \left( \card(I^{(q)})+\ell \right) \Bigr)_{1\leq
%  %\ell\leq K^{(q+1)}}$ are $K^{(q+1)}$ random labels independently and uniformly distributed in $I^{(q)}$.

% \item Sample $I^{(q)} \subset [1,N^{(q)}]$ the indices of the parent replicas which won't be resampled, requiring $m \neq m' \in I^{(q)} \Rightarrow  P^{(q)}(m) \neq P^{(q)}(m')$. $I^{(q)}$ is also measurable wrt $\calF^{(q)}_{Z^{(q)}} \vee \sigma( U_b^{(q)} )$.
\item Update the weights as follows: for all $n' \in  I^{(q+1)}$ and $ n
  \in I^{(q)}\setminus I^{(q+1)}_{\rm killed}$ such that $P^{(q+1)}(n')=n$,
  \begin{equation}\label{weights}
  G^{(n',q+1)} \eqdef \frac{G^{(n,q)}}{\E \p{B^{(n,q+1)} \st \calF^{(q)}}}.
  \end{equation}

\end{enumerate}

\paragraph{The resampling step}%, stage $q \geq 1$}

\begin{enumerate}[(i)]
\item Replicas in $I^{(q)}\setminus I^{(q+1)}_{\rm killed}$ are not
  resampled {\it i.e.} for any $n\in I^{(q)}\setminus I^{(q+1)}_{\rm killed}$, $X^{(n,q+1)}= X^{(n,q)}$.
\item For $n'\in I_{\rm new}^{(q+1)}$, $X^{(n',q+1)}$ is sampled by branching its parent replica $X^{(P^{(q+1)}(n'),q)}$, {\it i.e.}
  according to
   the resampling kernel $\pi_{Z^{(q)}}(X^{(P^{(q+1)}(n'),q)},dx)$.
\end{enumerate}

Then set $\calX^{(q+1)} = (X^{(n,q+1)},G^{(n,q+1)})_{n \in I^{(q+1)}}$.

\paragraph{The level computation step}%, stage $q \geq 1$ }
\begin{enumerate}[(i)]
\item For any $z\in\R$, define the $\sigma$-field of events
\begin{equation}\label{def_filt_calF}
\calF^{(q+1)}_z \eqdef \calF^{(q)} \vee \sigma( P^{(q+1)} ) \vee \filt_z^{\calX^{(q+1)}}.% \vee \sigma(U^{(q)}_l),
\end{equation}
%where $U^{(q)}_l$ is a random variable independent of $\calF^{(q)}
%\vee \sigma( P^{(q+1)} ) \vee \filt_z^{\calX^{(q+1)}}$ for any $z\in
%\R$.
The $\sigma$-field  generated by $P^{(q+1)}$ contains in particular the
$\sigma$-field generated by $(B^{(n,q+1)})_{n
  \in I^{(q)}}$.
\item Sample the next level $Z^{(q+1)} \in \R $, which is assumed
  to satisfy:
\begin{itemize}
\item $ Z^{(q+1)} \geq Z^{(q)} \as $
\item $Z^{(q+1)}$ is a stopping level with respect to $\p{ \calF^{(q+1)}_z }_{z \in \R }$.
\end{itemize}
\item Define the $\sigma$-field of events $\calF^{(q+1)} \eqdef \calF^{(q+1)}_{Z^{(q+1)}}$.
\end{enumerate}

\paragraph{Increment}
Increment $q\leftarrow q+1$ and go back to the stopping criterion.

\medskip

For theoretical purposes, we need in the following to define the
system of weighted replicas $(\calX^{(q)})_{q \ge 0}$ and the associated
filtration $\p{\calF^{(q)}}_{q\ge 0}$ for all $q \ge 0$ (and not only up to
the iteration~$Q_{\rm iter}$). This is simply done by considering the
iterative procedure above with $S^{(q)}=0$ for all $q \ge 0$.

\begin{Rem}[On the labeling]
The way the replicas are labeled is purely conventional.
\end{Rem}

\subsubsection{From the GAMS framework to a practical
  algorithm}\label{sec:GAMS_to_algo}

In the GAMS framework, we have defined (see~\eqref{def_filt_calF}) a family of $\sigma$-fields which is indexed both by the
level $z \in \R$ and by the iteration index $q\geq 0$ and which is
denoted by $\bigl( \calF^{(q)}_z\bigr)_{q\geq 0, z \in \R}$.
By construction, this family of $\sigma$-fields satisfies
$\calF^{(q)}_z \subset \calF^{(q')}_{z'}$ if $q < q'$ or  $\left\{q=q'
  \text{ and } z \leq z'\right\}$: in other words, the family $\p{
  \calF^{(q)}_z }_{q\geq 0, z \in \R}$ is a filtration if 
$\N\times \R$ is endowed with the lexicographic ordering. 
At the end of the $q$-th iteration of the algorithm ($q\geq 0$), one can think
of the $\sigma$-field $\calF^{(q+1)}=\calF^{(q+1)}_{Z^{(q+1)}}$ as containing all the
necessary information required to perform the next step of the
algorithm. % random level $Z^{(q)}$ and to perform resampling (in the
           % next iteration $q+1$) from parent replicas
           % $\p{X^{n,q}}_{n\in I^{q-1}}$.

To make a practical splitting algorithm which enters into the GAMS
framework, three sets of random variables need to be
defined: $(S^{(q)})_{q \ge 0}$,
$(B^{(n,q+1)})_{q \ge 0,n \in
    I^{(q)}})$ and $(Z^{(q)})_{q \ge 0}$. As already stated above, we
  assume the following on these random variables.
\begin{Ass}\label{ass:algo}
The random variables $(S^{(q)})_{q \ge 0}$,
$(B^{(n,q+1)})_{q \ge 0,n \in
    I^{(q)}}$, and $(Z^{(q)})_{q \ge 0})$ satisfy the
following properties:
\begin{itemize}
\item the sequence of random variables $(S^{(q)})_{q \ge 0}$ needed for defining the stopping
  criterion, are such that $S^{(q)}$ is with values in $\{0,1\}$ and
  is $\pcalF{q}$-measurable;
\item the sequence of branching numbers $(B^{(n,q+1)})_{q \ge 0,n \in
    I^{(q)}}$ are with values in $\N$, are assumed to be sampled conditionally on
  $\pcalF{q}$ (see Section~\ref{sec:notation} for a precise
  definition) and such that $\E \p{B^{(n,q+1)} \st \pcalF{q}} > 0 \as$;
\item the
  sequence $(Z^{(q)})_{q \ge 0}$ of stopping levels are with
  values in $\R$, satisfy
  $Z^{(q+1)} \ge Z^{(q)}$ and are such that $Z^{(q)}$ is a stopping level with
  respect to $\p{ \calF^{(q)}_z }_{z \in \R }$  (see Definition~\ref{def:stopping_level}).
\end{itemize}
\end{Ass}

As explained above, once these three sets of random variables have
been defined, the GAMS framework becomes a practical splitting
algorithm which yields an unbiased estimator of~\eqref{eq:pi(phi)}
(this is the claim of Theorem~\ref{th:unbiased} proved in Section~\ref{sec:unbiased}).

%\comment{Insister sur le calcul de $Z^{(q)}$: SMC adaptatif.}

% \comment{ ??? One of the fundamental requirements to sample the GAMS algorithm is the computation of a random level $Z^{(q)}$ at each step $q$ -- in order to use the resampling kernels $\pi_{z}(x,\cdot)$ with $z=Z^{(q)}$ and $x$ belonging to a carefully chosen subset of replicas in $\calX^{(q)}$: this is the key point where the algorithm can be seen as an adaptive SMC algorithm.}

Let us emphasize that the requirement that $Z^{(q)}$ is a
$(\calF_{z}^{(q)} )_{z\in\R}$-stopping level is fundamental to obtain
unbiased estimators. It will be instrumental to apply Doob's optimal
stopping Theorem for appropriate martingales in the proof of unbiasedness.

As a consequence of the measurability property of $(S^{(q)})_{q \ge
  0}$, one easily gets the following property on $Q_{\rm iter}$:
\begin{Pro}\label{propo_Qiter_stopping}
The random variable $Q_{\rm iter}$ is a stopping time with respect to the filtration $\p{\calF^{(q)}}_{q\ge 0}$.
\end{Pro}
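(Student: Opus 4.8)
The plan is to verify directly the defining property of a stopping time, namely $\{Q_{\rm iter}=q\}\in\calF^{(q)}$ for every $q\ge 0$, after first checking that $\p{\calF^{(q)}}_{q\ge 0}$ is indeed a (discrete-index) filtration. Recall that by construction $Q_{\rm iter}=\inf\set{q\ge 0 : S^{(q)}=0}$, and that under Assumption~\ref{ass:algo} each $S^{(q)}$ takes values in $\{0,1\}$ and is $\calF^{(q)}$-measurable. Everything will then reduce to expressing $\{Q_{\rm iter}=q\}$ through the $S^{(j)}$ with $j\le q$ and locating each of the corresponding events in the appropriate $\sigma$-field.

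First I would establish the monotonicity $\calF^{(q)}\subset\calF^{(q+1)}$. The point is that, by the very definition~\eqref{def_filt_calF}, the $\sigma$-field $\calF^{(q)}$ appears literally as one of the joined $\sigma$-fields in $\calF^{(q+1)}_z=\calF^{(q)}\vee\sigma(P^{(q+1)})\vee\filt_z^{\calX^{(q+1)}}$, so that $\calF^{(q)}\subset\calF^{(q+1)}_z$ for every $z\in\R$. To pass from these non-stopped $\sigma$-fields to the stopped one $\calF^{(q+1)}=\calF^{(q+1)}_{Z^{(q+1)}}$, I would fix $A\in\calF^{(q)}$ and an arbitrary $z\in\R$, and observe that $A\cap\set{Z^{(q+1)}\le z}\in\calF^{(q+1)}_z$, since $A\in\calF^{(q+1)}_z$ and $\set{Z^{(q+1)}\le z}\in\calF^{(q+1)}_z$ — the latter because $Z^{(q+1)}$ is a stopping level with respect to $\p{\calF^{(q+1)}_z}_{z\in\R}$ by Assumption~\ref{ass:algo}. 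By the characterization of the stopped $\sigma$-field in Definition~\ref{def:stopping_level}, this yields $A\in\calF^{(q+1)}$, hence $\calF^{(q)}\subset\calF^{(q+1)}$ and $\p{\calF^{(q)}}_{q\ge 0}$ is a filtration.

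With this in hand, the conclusion is bookkeeping. For $q\ge 0$ I would write $\set{Q_{\rm iter}=q}=\bigl(\bigcap_{j=0}^{q-1}\set{S^{(j)}=1}\bigr)\cap\set{S^{(q)}=0}$. Each factor $\set{S^{(j)}=1}$ (and $\set{S^{(q)}=0}$) belongs to $\calF^{(j)}$ by the measurability part of Assumption~\ref{ass:algo}, and $\calF^{(j)}\subset\calF^{(q)}$ for $j\le q$ by the filtration property just proved; hence the whole finite intersection lies in $\calF^{(q)}$. Taking the finite union over $j\le q$ then gives $\set{Q_{\rm iter}\le q}=\bigcup_{j=0}^{q}\set{Q_{\rm iter}=j}\in\calF^{(q)}$, which is exactly the assertion that $Q_{\rm iter}$ is a stopping time for $\p{\calF^{(q)}}_{q\ge 0}$.

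I expect no serious obstacle here: the argument is essentially elementary once the filtration structure is in place. The only slightly delicate step is the passage to the stopped $\sigma$-fields in the proof of $\calF^{(q)}\subset\calF^{(q+1)}$, where one must invoke the stopping-level property of $Z^{(q+1)}$ rather than merely the inclusion of the non-stopped filtrations; this is the one place where Definition~\ref{def:stopping_level} and Assumption~\ref{ass:algo} are genuinely used.
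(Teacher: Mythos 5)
Your proof is correct and follows the same route the paper has in mind: the paper states the result as an immediate consequence of the $\calF^{(q)}$-measurability of $S^{(q)}$ together with the (asserted ``by construction'') monotonicity of $\p{\calF^{(q)}}_{q\ge 0}$, which is exactly your decomposition $\set{Q_{\rm iter}=q}=\bigl(\bigcap_{j=0}^{q-1}\set{S^{(j)}=1}\bigr)\cap\set{S^{(q)}=0}$. The only added value of your write-up is that you actually verify the inclusion $\calF^{(q)}\subset\calF^{(q+1)}=\calF^{(q+1)}_{Z^{(q+1)}}$ via the stopping-level property of $Z^{(q+1)}$, a detail the paper leaves implicit.
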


\begin{Rem}[On the measurability of the system of replicas with
  respect to $(\calF^{(q)}_z)_{z \in \R}$]
Let us emphasize that for any $q \ge 0$, the system of replicas
$(X^{(n,q)})_{n \in I^{(q)}}$ is $\calF$-measurable but it is not measurable with respect to
$\calF^{(q)}=\calF^{(q)}_{Z^{(q)}}$ (which indeed stores the
information only up to the stopping level $Z^{(q)}$). 
% This non-anticipating property is one of
% the key ingredient used to prove the
% unbiasedness property.
%
% Attention: $(X^{(n,q)})_{n \in I^{(q)}}$ n'est pas $\cup_{z \in \R}
% \calF^{(q)}_z$-measurable. Penser à un processus avec deux
% composantes indépedantes et \xi qui ne voit qu'une des deux.
%
\end{Rem}

\subsubsection{The estimator}\label{sec:GAMS_estimator}
 
For any integer $q \geq 0$ and any bounded test function $\ph: \calP \to \R$, we define the estimator
\begin{equation}\label{GAMS_estimator}
 \dps \hat{\pi}^{(q)}(\ph)  = \sum_{n \in I^{(q)}} G^{(n,q)} \ph(X^{(n,q)})
\end{equation}
of $\pi(\ph)$.
As it will be proven in Section~\ref{sec:unbiased}, any algorithm which
enters into the GAMS framework is such that $\hat{\pi}^{(q)}(\ph)$ is an unbiased estimator of $\pi(\ph)$: for any $q\geq 0$, $\E\p{\hat{\pi}^{(q)}(\ph)}=\pi(\ph)$.
Moreover, under appropriate assumptions (see Theorem~\ref{th:unbiased}), this statement can be generalized when $q$ is replaced by the random number of iterations $Q_{\rm iter}$ of the algorithm:
\[
\E\p{ \sum_{n \in I^{(Q_{\rm iter})}} G^{(n,Q_{\rm iter})} \ph(X^{(n,Q_{\rm iter})}) } = \pi(\ph).
\]
The proof of this result is given in Sections~\ref{sec:proof_th} and~\ref{sec:proofs} and is
based on martingale arguments.

%\comment{Differences with AMS: $P^{(q+1)}$ took values in the subset $I_{\rm on}^{(q)}$ of $I^{(q)}$. Why it works: the "Dirac" property for resamplers. A r\'ediger, ou dans la section suivante. Plus absence de coordonn\'ee de r\'eaction.}

%
%\begin{Rem}
%In the level computation step (i), we have included a random variable $U_l^{(q)}$ in the definition of the $\sigma$-field: it represents some additional randomness for the sampling of $Z^{(q+1)}$. See Section~\ref{sec:variants} for examples of rules for which this procedure plays a role.
%\end{Rem}

\subsection{The AMS algorithm enters into the GAMS framework}\label{sec:justif_Markov}

In this section, we explain how the GAMS framework encompasses the AMS algorithm of Section~\ref{sec:Markov}. We thus go back to the setting described there and prove that the modelling and algorithmic assumptions of sections~\ref{sec:gen_set} and~\ref{sec:GAMS_def} are satisfied in this case. % stated for the Markov chain example of Section.% The unbiasedness property of the AMS estimator will thus immediately follow from the general case.% In what follows, we consider again the classical AMS algorithm $AMS(n_{\rm rep},k)$.
% Theorem \ref{th:unbiased} which is proved for the GAMS algorithm can be applied to the AMS algorithm.

\subsubsection{Modelling assumptions}

Let us first check that the so-called dynamical setting (namely the
sampling of paths of Markov chains) that we considered in
Section~\ref{sec:Markov} for the AMS algorithm enters into the general setting of Section~\ref{sec:gen_set}.

In Section~\ref{sec:Markov}, $(\calP,\calB(\calP),\pi)$ is the path space for Markov
chains, endowed with the standard topology, as explained in
Section~\ref{sec:Markov_path}.  The filtration $(\filt_z)_{z \in \R}$ on $(\calP,\calB(\calP))$ is
defined by: for all $z \in \R$, $\filt_z$ is the smallest $\sigma$-field which makes the application $x \in \calP \mapsto x_{t \wedge (\tT_z(x))} \in \cal P$ measurable:
\begin{equation}\label{eq:filtz_Markov}
\filt_z=\sigma\big(x \mapsto (x_{t \wedge (\tT_z(x))})_{t \ge 0}\big).
\end{equation}
Finally, for any $z\in \R$ and $x\in \calP$, the resampling kernel $\pi_z(x,\cdot)$ is defined by~\eqref{eq:piz_chain_1}--\eqref{eq:piz_chain}.

Let us now check that Assumptions~\ref{ass:key_2} and~\ref{ass:key_1} are satisfied.
The conditions of Assumption~\ref{ass:key_1} are direct consequences of the strong Markov property applied to the chain $t \mapsto X_t \in \calS$ defined by \eqref{eq:X_Path} at the stopping time $\tau_z$  (the strong Markov property always holds true for discrete-time Markov processes).

The right-continuity property of Assumption~\ref{ass:key_2} crucially relies on the definition~\eqref{eq:Tz} of $\tT_z(x)$ as the entrance time of the path $t\mapsto x_t$ in the level set $\xi^{-1}\bigl]z,+\infty\bigr[$: the fact that $]z,\infty[$ is an open set implies $z\mapsto \tT_z(x)$ is right continuous. More precisely, we have the following Lemma.
%It also holds because we handle discrete-in-time processes.

%If the filtration $\p{\filt_{z}^{X}}_{z\in\R}$ is right-continuous, it is possible to show that any martingale
%with respect to this filtration has a cad-lag version.

\begin{Lem} Assumption~\ref{ass:key_2} is satisfied for the resampling
  kernel defined by~\eqref{eq:piz_chain_1}-\eqref{eq:piz_chain}. More precisely, for any $x \in \calP$, the resampling kernel $z \in \R \mapsto \pi_z(x, \, . \, ) \in \Proba(\calP)$ is piecewise constant and right continuous.%, where $\Proba(\calP)$ is endowed with standard convergence in distribution.
\end{Lem}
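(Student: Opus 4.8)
The plan is to reduce the whole statement to an elementary analysis of a single integer-valued quantity, namely the branching time $\tT_z(x)$, since the resampling kernel $\pi_z(x,\cdot)$ depends on the level $z$ \emph{only} through it. First I would fix $x \in \calP$ and record the functional form of the kernel. For $\tau \in \N \cup \{+\infty\}$, let $\Psi_\tau(x) \in \Proba(\calP)$ denote the law of the path that coincides with $x$ up to time $\tau$ and is then completed by the Markov transition $P$ and stopped at $A$, with $\Psi_{+\infty}(x)$ the Dirac mass at the stopped path $(x_{t \wedge \tT_A(x)})_{t \ge 0}$. By the very definition~\eqref{eq:piz_chain_1}--\eqref{eq:piz_chain}, the level enters only via the branching time and position $x_{\tT_z(x)}$, so that $\pi_z(x,\cdot) = \Psi_{\tT_z(x)}(x)$, where the map $\tau \mapsto \Psi_\tau(x)$ does not depend on $z$.

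Next I would analyse $z \mapsto \tT_z(x) = \inf\{t \le \tT_A(x) : \xi(x_t) > z\}$. Since the sequence $(\xi(x_t))_t$ is fixed, this map is non-decreasing and takes values in $\N \cup \{+\infty\}$. The crucial point, and the single place where the strict inequality in~\eqref{eq:Tz} is used, is its right-continuity: if $\tT_z(x) = t_0 < +\infty$ then $\xi(x_{t_0}) > z$, so for every $z' \in [z, \xi(x_{t_0}))$ one still has $\xi(x_{t_0}) > z'$ while $\xi(x_s) \le z < z'$ for $s < t_0$, whence $\tT_{z'}(x) = t_0$. Thus $z \mapsto \tT_z(x)$ is constant on a half-open interval to the right of each point, i.e. a right-continuous step function whose jumps occur at the successive running maxima of $(\xi(x_t))_t$; and for $z \ge \Xi(x)$ one has $\tT_z(x) = +\infty$ with $\pi_z(x,\cdot)$ the Dirac mass, again constant to the right. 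Since $\pi_z(x,\cdot) = \Psi_{\tT_z(x)}(x)$ and $\Psi$ is independent of $z$, the map $z \mapsto \pi_z(x,\cdot)$ inherits this structure: it is piecewise constant and right-continuous in $\Proba(\calP)$. In particular, for any bounded measurable (a fortiori continuous bounded) $\ph$, the map $z \mapsto \pi_z(\ph)(x) = \int_\calP \ph \, d\pi_z(x,\cdot)$ is piecewise constant and right-continuous, which is the first assertion of Assumption~\ref{ass:key_2}.

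Finally I would check the limit at $-\infty$. Since $\tX_0 = x_0$ is deterministic, every path $x$ under consideration starts at $x_0$; for $z < \xi(x_0)$ one has $\xi(x_0) > z$, hence $\tT_z(x) = 0$ and $\pi_z(x,\cdot) = \Psi_0(x)$, which is exactly the law of the Markov chain started at $x_0$ and stopped at $\tau_A$, namely $\pi$. Therefore $\pi_z(\ph)(x) = \pi(\ph)$ for all $z < \xi(x_0)$, so that $\lim_{z \to -\infty} \pi_z(\ph)(x) = \pi(\ph)$, consistently with the convention $\pi_{-\infty}(x,\cdot) = \pi$.

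The only genuinely delicate point is the right-continuity of $z \mapsto \tT_z(x)$, and I expect its reliance on the strict inequality $\xi(x_t) > z$ (equivalently, on $]z,+\infty[$ being open) to be the crux of the argument: had one used a non-strict inequality, the map would be \emph{left}-continuous and the conclusion would fail. A secondary point requiring only minor care is that a path may cross infinitely many records, so the partition into constancy intervals is merely locally finite below $\Xi(x)$, with breakpoints possibly accumulating at $\Xi(x)$; this does not affect right-continuity, a pointwise property, which also holds at $\Xi(x)$ itself since the kernel is the constant Dirac mass for all $z \ge \Xi(x)$.
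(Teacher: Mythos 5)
Your proof is correct and takes essentially the same route as the paper's: both reduce the statement to the right-continuity of $z \mapsto \tT_z(x)$, treating the cases $\tT_z(x)=+\infty$ and $\tT_z(x)<+\infty$ separately and exploiting the strict inequality in~\eqref{eq:Tz} to get an interval $[z, \xi(x_{\tT_z(x)}))$ of constancy. The only difference is that you also verify the limit as $z\to-\infty$, which the paper's proof leaves implicit.
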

\begin{proof}
First, assume that $\tT_z(x)=+\infty$, which means that $\Xi(x)\leq
z$. Then, for any $\eps \geq 0$ we still have
$\tT_{z+\eps}(x)=+\infty$. In that case $\pi_z(x, \, . \,)$ is a Dirac mass:
$\pi_z(x, \, . \,) = \pi_{z + \eps}(x, \, . \, )=\delta_{(x_{t\wedge \tT_A(x)})_{t\ge 0}}$.

Now, assume that $\tT_z(x)<+\infty$. Then, for $\eps\in ]0,\xi(x_{\tT_z(x)})-z[$, $\tT_z(x)=\tT_{z+\eps}(x)$, and by the definition of the resampling kernel,
$\pi_z(x, \, . \,) = \pi_{z + \eps}(x, \, . \, )$.
\end{proof}

%\comment{Si on met l'hypoth\`ese de continuit\'e \`a droite sur la filtration, il faut un analogue de ce lemme.}

\subsubsection{Algorithmic assumptions}

As explained in Section~\ref{sec:GAMS_def}, to obtain a practical
splitting algorithm which enters into the GAMS framework, three procedures
need to be made precise: the stopping criterion, the computation rule
of the branching numbers and the computation of the stopping
levels. These procedures should satisfy the measurability requirements
of Assumption~\ref{ass:algo}.

%{\color{blue} J'EN SUIS L\`A}

\paragraph{The stopping criterion}

In the AMS algorithm, we set $S^{(q)}=\mathds{1}_{Z^{(q)}>z_{\rm max}}$
which is indeed a $\calF^{(q)}$-measurable random variable, since
$Z^{(q)}$ is a $(\calF^{(q)}_z)_{z \in \R}$-stopping level, see
Lemma~\ref{lem:stop} below.

\paragraph{The computation rule
of the branching numbers}

The branching numbers $B^{(n,q+1)}$ are defined in the splitting step (iv) of the
AMS algorithm by \eqref{eq:BN_AMS}, for $n\in I_{{\rm on},>
  Z^{(q)}}^{(q)}$. We extend the definition for $n\in I^{(q)}\setminus
I_{{\rm on},> Z^{(q)}}^{(q)}$ by simply setting $B^{(n,q+1)}=1$. It is then
easy to check that they satisfy the requirements of
Assumption~\ref{ass:algo}.  Notice that in the AMS algorithm, the total number of new replicas
$K^{(q+1)}=\sum_{n \in I^{(q)}} \max\{B^{(n,q+1)}-1,0\}$ is given by
$K^{(q+1)}=\card  I_{{\rm on}, \le Z^{(q)}}^{(q)}$.
Moreover, all branching numbers are positive, so that
$I^{(q+1)}_{\rm killed}=\emptyset$. Another particular feature of the AMS algorithm is that the map $P^{(q+1)}$ takes
values in the strict subset $I_{{\rm on}, >Z^{(q)}}^{(q)}$ of 
$I^{(q)}$.

%Even if the structures of the AMS algorithm and of the GAMS framework seem different, in fact they are equivalent. In the AMS algorithm, we first computed the $\calF^{(q)}$-measurable random variable $K^{(q+1)}$; we then computed the children-parent mapping $P^{(q+1)}$ -- which contains all the randomness at this stage -- and deduced the value of the branching numbers.

% It is also easy to check that 
% \begin{align*}
% \sum_{n \in I^{(q)}} \max(B^{(n,q+1)}-1,0)&=\sum_{n' \in I_{{\rm on},> Z^{(q)}}^{(q)}} \bigl(B^{(n',q+1)}-1\bigr)\\
% &=\sum_{n' \in I_{{\rm on},> Z^{(q)}}^{(q)}}\card\left\{ n\in I_{\rm new}^{(q+1)}, \,   P^{(q+1)}(n)=n'\right\}\\
% &=\card\bigl( I_{\rm new}^{(q+1)}\bigr)=K^{(q+1)}.
% \end{align*}

%\paragraph{Weight computation}
Let us check that the computation rule~\eqref{eq:weight_AMS} for the
weights in the AMS algorithm is indeed consistent with the
formula~\eqref{weights} given in the GAMS framework. First, for $n \in
I^{(q+1)}_{\rm off}=I_{{\rm on}, \leq Z^{(q)}}^{(q)}\sqcup I_{\rm
  off}^{(q)}$, $B^{(n,q+1)}=1$, $P^{(q+1)}(n)=n$ and, consistently, $G^{(n,q+1)}=G^{(n,q)}$.

Second, for $n\in I_{{\rm on}, >Z^{(q)}}^{(q)}$, it is clear that $\E
\p{B^{(n,q+1)} \st \calF^{(q)} }$ does not depend on $n$
(since the random variables are exchangeable in $n \in I_{{\rm on},
  >Z^{(q)}}^{(q)}$). In addition, by construction, $\sum_{n' \in
  I_{{\rm on},> Z^{(q)}}^{(q)}} B^{(n',q+1)}=\nrep$. Thus, we
have by a simple counting argument:  for any $n\in I_{{\rm on},> Z^{(q)}}^{(q)}$,
\begin{align*}
\E\bigl(B^{(n,q+1)}\st \calF^{(q)}\bigr)&=\frac{1}{\card I_{{\rm on},> Z^{(q)}}^{(q)}}\sum_{n' \in I_{{\rm on},> Z^{(q)}}^{(q)}} \E\bigl(B^{(n',q+1)}\st \calF^{(q)}\bigr)\\
&=\frac{ \E\left( \sum_{n' \in I_{{\rm on},> Z^{(q)}}^{(q)}}
  B^{(n',q+1)}\st \calF^{(q)}\right)}{\card I_{{\rm on},> Z^{(q)}}^{(q)}}=\frac{\nrep}{ \nrep - K^{(q+1)}}.
\end{align*}
Thus for $n \in   I_{ {\rm on}, > Z^{(q)}}^{(q)}$  (and since $P^{(q+1)}(n)=n$) the formula
$G^{(n,q+1)} \eqdef  \frac{ \nrep - K^{(q+1)} }{ \nrep }G^{(n,q)}$
in~\eqref{eq:weight_AMS} for the AMS algorithm is indeed consistent with the
updating formula~\eqref{weights} for the weights in the GAMS framework.

Third, for $n\in I_{\rm new}^{(q+1)}$, $G^{(n,q+1)} = G^{(P^{(q+1)}(n),q+1)}= \frac{ \nrep - K^{(q+1)} }{ \nrep }G^{(P^{(q+1)}(n),q)}$ which is again consistent with the
updating formula~\eqref{weights} for the weights in the GAMS
framework since $\frac{ \nrep - K^{(q+1)} }{ \nrep }=1 / \E\bigl(B^{(P^{(q+1)}(n),q+1)}\st \calF^{(q)}\bigr)$.

%and by the definition of $K^{(q+1)}=\nrep-\card(I^{(q)}_{{\rm on}, > Z^{(q)}} )=\card(I^{(q)}_{{\rm on}, \leq Z^{(q)}} )$ we get
%\[
%\E \p{B^{(n',q)} \st \calF^{(q)}_{Z^{(q)}} } =  \frac{\nrep}{\card(I^{(q)}_{{\rm on}, >Z^{(q)}} ) } =  \frac{\nrep}{ \nrep - K^{(q+1)}} .
%\]

%The formulas for the weights on the classical AMS algorithm $AMS(\nrep,k)$ thus fit in the more general framework of the GAMS algorithm.

\paragraph{Computation of the stopping levels}

% In the AMS algorithm, the level $Z^{(q)}$ is defined as the maximum
% level of $X^{(\Sigma^{(q)}(k),q)}$ over the $\nrep$ replicas with labels
% in $I_{\rm on}^{(q)}$ (see Equation~\eqref{eq:Zq_AMS}).
Let us now
check that the requirements on $Z^{(q)}$ in Assumption~\ref{ass:algo}
are satisfied. By
definition of $Z^{(q+1)}$ (see the level computation step of the AMS
algorithm), it is clear that $Z^{(q+1)} \ge
Z^{(q)}$ (actually, the strict inequality $Z^{(q+1)} >
Z^{(q)}$ holds). It remains to prove that $Z^{(q)}$ is a stopping level for the filtration $(\calF^{(q)}_z)_{z \in \R}$.

We start with an elementary result, which again highlights the
importance of the strict inequality $>z$ in the
definitions~\eqref{eq:Tz} and~\eqref{eq:tauz} of $\tT_z(x)$ and $\tau_z$.

\begin{Lem}\label{lem:stop} Let $X: \Omega \to \calP$ be a Markov
  chain over the state space $\calS$ (see
  Equation~\eqref{eq:P_AMS}). Then the random variable $\Xi(X)$
  (where, we recall, the maximum level mapping $\Xi$ is defined by~\eqref{eq:max}) is a $(\filt^X_z)_{z \in \R }$-stopping level: for any $z\in \R$,
$\left\{ \Xi(X) \leq z\right\}\in \filt^X_z.$
\end{Lem}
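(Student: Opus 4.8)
The plan is to establish the statement at the level of the path space and then pull it back: it suffices to show $\{x \in \calP : \Xi(x) \leq z\} \in \filt_z$, since then $\{\Xi(X) \leq z\} = X^{-1}\bigl(\{\Xi \leq z\}\bigr) \in X^{-1}(\filt_z) = \filt^X_z$. Recall from~\eqref{eq:filtz_Markov} that $\filt_z = \sigma(F_z)$, where $F_z(x) = (x_{s \wedge \tT_z(x)})_{s \ge 0}$; thus a function on $\calP$ is $\filt_z$-measurable as soon as it factors through $F_z$ via a Borel map.

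First I would record the elementary but crucial identity
$$\{x \in \calP : \Xi(x) \leq z\} = \{x \in \calP : \tT_z(x) = +\infty\},$$
which is precisely where the strict inequality $\xi(x_t) > z$ in the definition~\eqref{eq:Tz} of $\tT_z$ enters: indeed $\Xi(x) \le z$ means $\xi(x_{t \wedge \tT_A(x)}) \le z$ for all $t$, i.e.\ no time $t \in \{0,\ldots,\tT_A(x)\}$ satisfies $\xi(x_t) > z$, which is exactly $\tT_z(x) = +\infty$.

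The heart of the proof is then the identity
$$\one_{\{\Xi(x) \le z\}} = \one_{\{\Xi(F_z(x)) \le z\}} \qquad \text{for all } x \in \calP,$$
which exhibits $\one_{\{\Xi \le z\}}$ as the composition of the Borel map $\one_{\{\Xi \le z\}}$ (note $\Xi$ is Borel measurable as a countable supremum of the measurable maps $x \mapsto \xi(x_{t \wedge \tT_A(x)})$) with $F_z$, hence as a $\filt_z$-measurable function; this yields the claim. I would prove the identity by distinguishing two cases. If $\tT_z(x) = +\infty$, then $F_z(x) = x$ and both sides coincide trivially. If $\tT_z(x) < +\infty$, then the left-hand side vanishes by the first step, and I must check $\Xi(F_z(x)) > z$: the stopped path $F_z(x)$ takes the value $x_{\tT_z(x)}$ with $\xi(x_{\tT_z(x)}) > z$ at time $\tT_z(x)$, and one verifies that this time does not exceed $\tT_A(F_z(x))$, so that this value enters the supremum defining $\Xi(F_z(x))$ and forces $\Xi(F_z(x)) > z$ as well.

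The main obstacle is exactly this last verification in the case $\tT_z(x) < +\infty$: one must confirm that the overshoot value $x_{\tT_z(x)}$, at which $\xi$ strictly exceeds $z$, is not discarded by the internal stopping at $\tT_A(F_z(x))$ when computing $\Xi(F_z(x))$. I would handle this by the subcases $\tT_z(x) < \tT_A(x)$ — where $F_z(x)$ stays at $x_{\tT_z(x)} \notin A$ forever, so $\tT_A(F_z(x)) = +\infty$ and $\xi(x_{\tT_z(x)})$ contributes to the supremum — and $\tT_z(x) = \tT_A(x)$ — where $F_z(x)$ enters $A$ exactly at $\tT_z(x)$, so $x_{\tT_z(x)}$ is the stopped endpoint and still enters the supremum. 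In both subcases $\Xi(F_z(x)) \ge \xi(x_{\tT_z(x)}) > z$, which completes the argument.
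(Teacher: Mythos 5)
Your proof is correct and follows essentially the same route as the paper's: both rest on the identity $\{\Xi(x)\leq z\}=\{\tT_z(x)=+\infty\}$ (where the strict inequality in~\eqref{eq:Tz} is the key point) together with the $\filt_z$-measurability of the relevant object. The paper simply asserts that $\tT_z(X)$ is $\filt_z^X$-measurable and concludes, whereas you supply the justification by factoring the indicator through the stopping map $F_z$ and checking the two cases $\tT_z(x)<\tT_A(x)$ and $\tT_z(x)=\tT_A(x)$ --- a more complete write-up of the same argument.
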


\begin{proof}
On the one hand, we clearly have the equality of subsets of $\calP$:
$$\left\{ x\in \calP : \Xi(x)\leq z\right\}=\left\{x\in\calP : \tT_z(x)=+\infty\right\}.$$
On the other hand, $\tau_z=\tT_z(X)$ is a $\filt_{z}^{X}$-measurable
random variable. The result is then a consequence of these two facts.
\end{proof}

We are now in position to prove the last results which is needed for
Assumption~\ref{ass:algo} to hold.

\begin{Lem}\label{lem:Z^q_stop_level}
For any $q\geq 0$, $Z^{(q)}$ is a stopping level with respect to the
filtration $(\calF^{(q)}_z)_{z \in \R}$: for any $z \in \R$, $\{Z^{(q)} \le
z\} \in \calF^{(q)}_z$.
\end{Lem}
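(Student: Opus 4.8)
The plan is to argue by induction on $q$. The reason induction is forced on us is that the filtration $(\calF^{(q)}_z)_{z\in\R}$ is built from $\calF^{(q)}=\calF^{(q)}_{Z^{(q)}}$ at the previous iteration (see~\eqref{def_filt_calF}), and the very definition of this stopped $\sigma$-field presupposes that $Z^{(q)}$ is already a stopping level. I would therefore take as inductive hypothesis that $Z^{(q')}$ is a $(\calF^{(q')}_z)_z$--stopping level for every $q'<q$, so that $\calF^{(q')}$ is well defined, the base case $q=0$ being treated in the same way with the simplification that $I_{\rm on}^{(0)}=\{1,\ldots,\nrep\}$ is deterministic and $\calF^{(0)}_z=\filt_z^{\calX^{(0)}}$. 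The cases $z=\pm\infty$ are trivial, since $\set{Z^{(q)}\le+\infty}=\Omega$ and, the levels being real valued before the possible reset, $\set{Z^{(q)}\le-\infty}=\emptyset$. The heart of the matter is thus to rewrite $\set{Z^{(q)}\le z}$, for $z\in\R$, in terms of the counting variable
$$ N_z^{(q)} \eqdef \card\set{ n \in I_{\rm on}^{(q)} : \Xi(X^{(n,q)}) \le z }, $$
and to check that every ingredient is $\calF^{(q)}_z$--measurable.

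First I would settle the measurability of the building blocks. By Lemma~\ref{lem:stop} applied to each replica, $\set{\Xi(X^{(n,q)})\le z}\in\filt_z^{X^{(n,q)}}\subset\filt_z^{\calX^{(q)}}\subset\calF^{(q)}_z$; moreover the restriction of $\Xi(X^{(n,q)})$ to $\set{\Xi(X^{(n,q)})\le z}$ is $\filt_z^{\calX^{(q)}}$--measurable, since $\set{\Xi(X^{(n,q)})\le z'}\in\filt_{z'}^{\calX^{(q)}}\subset\filt_z^{\calX^{(q)}}$ for every $z'\le z$. The same induction shows that the set of working labels $I_{\rm on}^{(q)}$ is $\calF^{(q)}_z$--measurable for all $z$: indeed $I_{\rm on}^{(q)}=I_{{\rm on},>Z^{(q-1)}}^{(q-1)}\sqcup I_{\rm new}^{(q)}$, the label set $I_{\rm new}^{(q)}$ is read off from $\sigma(P^{(q)})\subset\calF^{(q)}_z$, and the partition of the previous working replicas according to whether their maximum level exceeds $Z^{(q-1)}$ is $\calF^{(q-1)}$--measurable. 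This last point uses the inductive hypothesis that $Z^{(q-1)}$ is a stopping level, Lemma~\ref{lem:stop} (which also makes $\Xi(X^{(n,q-1)})$ a $(\calF^{(q-1)}_z)_z$--stopping level), and the standard comparison of stopping levels giving $\set{\Xi(X^{(n,q-1)})\le Z^{(q-1)}}\in\calF^{(q-1)}_{Z^{(q-1)}}=\calF^{(q-1)}$. Combining these facts, $N_z^{(q)}$ is $\calF^{(q)}_z$--measurable.

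The main obstacle is the extinction reset $Z^{(q)}=+\infty$, which destroys the naive identity $\set{Z^{(q)}\le z}=\set{N_z^{(q)}\ge k}$. Writing $M^{(q)}=\Xi(X^{(\Sigma^{(q)}(k),q)})$ for the $k$-th order statistic (the value of $Z^{(q)}$ before the possible reset), the reset occurs exactly on $\set{N_{M^{(q)}}^{(q)}=\nrep}$, so for $z\in\R$,
$$\set{Z^{(q)} \le z} = \set{N_z^{(q)} \ge k} \cap \set{N_{M^{(q)}}^{(q)} < \nrep}.$$
I would then split according to the value of $N_z^{(q)}$. On $\set{k\le N_z^{(q)}<\nrep}$ some working replica has maximum level strictly above $z\ge M^{(q)}$, so by monotonicity $N_{M^{(q)}}^{(q)}\le N_z^{(q)}<\nrep$ and no reset occurs, whence this whole event lies in $\set{Z^{(q)}\le z}$. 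On $\set{N_z^{(q)}=\nrep}$ all working replicas have maximum level $\le z$, so by the previous paragraph every $\Xi(X^{(n,q)})$ is known to $\calF^{(q)}_z$, and therefore both $M^{(q)}$ and $N_{M^{(q)}}^{(q)}$ are $\calF^{(q)}_z$--measurable, so the occurrence of the reset is decided inside $\calF^{(q)}_z$. Consequently
$$\set{Z^{(q)} \le z} = \set{k \le N_z^{(q)} < \nrep} \sqcup \Bigl( \set{N_z^{(q)} = \nrep} \cap \set{N_{M^{(q)}}^{(q)} < \nrep} \Bigr),$$
and both pieces belong to $\calF^{(q)}_z$. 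This shows $\set{Z^{(q)}\le z}\in\calF^{(q)}_z$ for every $z\in\R\cup\set{-\infty,+\infty}$, closing the induction and verifying the last requirement of Assumption~\ref{ass:algo}.
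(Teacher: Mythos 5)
Your proof is correct and follows essentially the same route as the paper's: the paper writes $Z^{(q)}$ as the $k$-th order statistic $L^{(q)}$ reset to $+\infty$ on the extinction event $\{L^{(q)}=M^{(q)}\}$ (with $M^{(q)}$ the maximum), and decomposes $\{Z^{(q)}\le z\}=\{L^{(q)}\le z<M^{(q)}\}\sqcup\{L^{(q)}<M^{(q)}\le z\}$, which is exactly your splitting into $\{k\le N_z^{(q)}<\nrep\}$ and $\{N_z^{(q)}=\nrep\}\cap\{\text{no reset}\}$, both arguments resting on Lemma~\ref{lem:stop}. You merely spell out two points the paper treats tersely — the inductive measurability of $I_{\rm on}^{(q)}$ and the fact that the order statistics become measurable on the event where all maximum levels lie below $z$ — which is sound but not a different proof.
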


\begin{proof}
\comment{TONY: Vérifier cette preuve...}
Set by convention $Z^{(-1)}=-\infty$ and let us consider $q \ge 0$. Let us introduce the $k$-th
order statistics over the maximum levels at iteration $q$:
$L^{(q+1)}=\Xi(X^{(\Sigma^{(q+1)}(k),q+1)})$. Let us also introduce
$M^{(q+1)}=\max \{\Xi(X^{(n,q+1)}) : n \in I^{(q+1)}_{\rm on}\}$. By
definition of $Z^{(q+1)}$ (see the level computation step of the AMS
algorithm), $$Z^{(q+1)}=L^{(q+1)} \mathds{1}_{\{L^{(q+1)} < M^{(q+1)}\}}+
(+\infty) \mathds{1}_{\{L^{(q+1)} = M^{(q+1)}\}}.$$
Therefore, for any $z \in \R$, (using the 
partition $\Omega=\{M^{(q+1)} \le z\}\sqcup  \{M^{(q+1)} > z\}$)
\begin{align*}
\{Z^{(q+1)}\leq z\}&=\{L^{(q+1)} \le z\} \cap \{L^{(q+1)} <
                     M^{(q+1)}\}\\
&=\{L^{(q+1)} < M^{(q+1)} \le z\} \sqcup \{L^{(q+1)} \le z <
                     M^{(q+1)}\}.
\end{align*}
These events are all in the $\sigma$-field $\sigma\p{\set{\Xi(X^{(n,q+1)}) \leq z},
  \set{\Xi(X^{(n,q+1)}) \leq Z^{(q)}}, n \in I^{(q+1)}}$ (in
particular, the set of labels $I^{(q+1)}_{\rm on}$ is measurable
with respect to $\set{\Xi(X^{(n,q+1)}) \leq Z^{(q)}}$). To conclude, note that by construction (level computation step, $(i)$) and thanks to Lemma~\ref{lem:stop}: for any $z\in\R$,
$$\sigma\p{\set{\Xi(X^{(n,q+1)}) \leq z}, \set{\Xi(X^{(n,q+1)}) \leq Z^{(q)}}, n \in I^{(q+1)}}\subset \filt^{\calX^{(q+1)}}_{ z \vee Z^{(q)}}\subset \calF^{(q+1)}_z.$$
\end{proof}
% Pour connaître $\{K^{(q+1)} < M^{(q+1)} \le z\}$, il suffit de
% connaître les $\Xi(X^{(n,q+1)})$, $n\in I^{(q+1)}_{\rm on}$
% jusqu'au temps où ils atteignent le niveau $z$.
%
%Moreover, the mass conservation property~\eqref{eq:mass_cons} is satisfied.
%
%
%\paragraph{Stopping criterion}
%Recall that $Q_{\rm iter}=\inf\left\{q \st Z^{(q)}>z_{\rm max}\right\}$. Since by Lemma \ref{lem:Z^q_stop_level} $Z^{(q)}$ and Definition \ref{def:stopping_level} $Z^{(q)}$ is a $\calF_{Z^{(q)}}^{(q)} = \calF^{(q)}$-measurable random variable, the number of iterations $Q_{\rm iter}$ is a stopping time with respect to the filtration $\p{\calF^{(q)}}_{q\in\N}$.

% \comment{TONY: on pourrait faire une remarque disant que évidemment,
%   le cas d'un processus de Markov à temps continu rentre aussi dans le
%   cadre ? A moins qu'il y ait des difficultés, je ne me souviens plus...}

\subsubsection{Almost sure mass conservation}

The classical AMS algorithm satisfies an additional
nice property, namely it conserves almost surely the mass in the following sense:
\begin{Def}
A splitting algorithm which enters into the GAMS framework satisfies
the almost sure mass conservation property if 
\begin{equation}
  \label{eq:mass_cons}
\forall q \geq 0, \,   \sum_{n \in I^{(q)}}  G^{(n,q)} = 1 \as
\end{equation}
\end{Def}

Indeed, using the definition~\eqref{eq:weight_AMS} of the weights and
in particular the fact that all the weights $(G^{(n,q)})_{n \in
  I^{(q)}_{{\rm on},Z^{(q)}}}$ are the same: for
any $q \ge 0$,
\begin{align*}
 \sum_{n' \in I^{(q+1)}}  G^{(n',q+1)}&=\sum_{n' \in I_{\rm off}^{(q+1)}}G^{(n',q+1)}+\sum_{n' \in I_{{\rm on}}^{(q+1)}}G^{(n',q+1)}\\
&= \sum_{n' \in I_{{\rm on}, \leq Z^{(q)}}^{(q)}\sqcup I_{\rm off}^{(q)}}G^{(n',q)}+\sum_{n \in I_{{\rm on}, > Z^{(q)}}^{(q)}\sqcup I_{\rm new}^{(q+1)}}\frac{ \nrep - K^{(q+1)} }{ \nrep }G^{(P^{(q+1)}(n),q)}\\
&= \sum_{n' \in I_{{\rm on}, \leq Z^{(q)}}^{(q)}\sqcup I_{\rm off}^{(q)}}G^{(n',q)}+\sum_{n \in I_{{\rm on}, > Z^{(q)}}^{(q)}}G^{(n,q)}=\sum_{n' \in I^{(q)}}  G^{(n',q)}.
\end{align*}
Thus, since $\sum_{n' \in I^{(q)}}  G^{(n',0)}=1$, by induction on
$q$, \eqref{eq:mass_cons} is satisfied. This property will be useful in 
Theorem~\ref{th:unbiased} below: it is one of the two sufficient conditions to prove the
unbiasedness of the estimator $\hat{\ph}=\hat{\pi}^{(Q_{\rm iter})}(\ph)$ ($\hat{\pi}^{(q)}(\ph)$
being defined, we recall, by~\eqref{GAMS_estimator}).

Notice that this property
is not generally satisfied for any algorithm which enters into the
GAMS framework. Actually, it is only true in general on average: by taking
$\ph(x)=1$ and $Q_{\rm iter}=q$ in Theorem \ref{th:unbiased} below,
one indeed obtains that $\forall q \geq 0$,   $\E \left(\sum_{n' \in I^{(q)}}  G^{(n',q)} \right)= 1$.

\subsection{Reformulation of the AMS algorithm as a Sequential Monte-Carlo method}\label{sec:SMC}

The aim of this section is to make more explicit the link between the
 AMS algorithm and a Sequential Monte Carlo (SMC) sampler, for
readers who are familiar with SMC methods. For those who are not, this
section can be easily skipped.

For a reaction coordinate with discrete values, the AMS algorithm presented in Section~\ref{sec:Markov} can be
understood as a sequential importance sampling algorithm, where
weights are assigned to replicas, and replicas are then duplicated and
killed to compensate for these weights and obtain unbiased
estimators (see for example~\cite{DoucetDe-FreitasGordon2001} for a
nice introduction to SMC methods
and~\cite{CerouDel-MoralFuronGuyader2012} for a discussion of the
relationship between SMC algorithms and
multilevel splitting algorithms).

To highlight the similarity between the AMS algorithm and a SMC sampler, let us assume that the reaction coordinate takes values in the finite set $\{1,2, \ldots, z_{\rm max}\}$
\[
{\xi: \calS \to \{1,2, \ldots,z_{\rm max}\}}.
\]
Let us now introduce a new way to label the successive iterations of
the algorithm, by using the
levels $z \in \{1,2, \ldots, z_{\rm max}\}$ rather than the iteration
index $q \geq 0$. Notice indeed that for each $z$, there exists a unique
iteration index $q \ge 0$ such that $z \in [Z^{(q-1)}, Z^{(q)})$. Let
us then set: for all $z \in \{1,2, \ldots, z_{\rm max}\}$ and $q$ such that $z \in [Z^{(q-1)}, Z^{(q)})$,
\begin{equation*}
  \begin{cases}
  J^{(z)} = I^{(q)},\\
    Y^{(n,z)} = (X^{(n,q)}_{t \wedge \tT_z(X^{(n,q)})}, t \in \N),
    \quad \forall n \in I^{(q)}, \\
    H^{(n,z)} = G^{(n,q)}, \quad \forall n \in I^{(q)}.  
  \end{cases}
\end{equation*}
The random variables $J^{(z)}$,  $Y^{(n,z)}$ and $H^{(n,z)}$ are thus
respectively the new set of labels, the new system of replicas and
the new system of weights, indexed by the levels $z$ rather than the
iteration index $q$.
One can then check that the sequence of weighted replicas
$(Y^{(n,z)},H^{(n,z)})_{n \in J^{(z)}}$ is obtained by applying a standard
sequential Monte Carlo algorithm which iterates the following two steps:
\begin{enumerate}
\item A splitting step, equivalent to the splitting step of Section~\ref{sec:AMS_Markov}: replicas that have reached the $z$-level set are split and weighted according to the splitting rule which conserves the total number of replicas above $z$. The weights of replicas that have not reached the $z$-level set are not modified.
\item A mutation step, where all replicas are resampled independently
  according to $\pi_z$, but with paths stopped at the stopping time
  $\tT_{z+1}$ (defined by~\eqref{eq:Tz}).
\end{enumerate}

In the SMC algorithm presented above, all the replicas are resampled
which is not the case for the classical AMS algorithm. The following
lemma is then crucial to reformulate the AMS algorithm as a SMC
sampler. We recall that the resampling kernel $\pi_z(x,dx')$ has been defined in
Section~\ref{sec:resampling_kernel}. Moreover, the children-parent
mapping has been extended to $I^{(q+1)}_{\rm on}$
by setting $P^{(q+1)}(n)=n$ for $n \in I^{(q+1)}_{\rm on} \setminus
I^{(q+1)}_{\rm new}$.
\begin{Lem}
Consider the algorithm AMS introduced in
Section~\ref{sec:AMS_Markov}. Assume that in the resampling step,
\emph{all} replicas are resampled. More precisely, replace the two
items $(i)$ and $(ii)$ in the resampling step by a single one:
\begin{enumerate}[(i)]
\item For all $n\in I^{(q+1)}_{\rm on}$, $X^{(n,q+1)}$ is sampled with the resampling kernel $\pi_{Z^{(q)}}(X^{(P^{(q+1)}(n),q)}, d x')$.
\end{enumerate}

Then, the probability distribution of the algorithm is unchanged:
the random variables $(X^{(n,Q_{\rm iter})},G^{(n,Q_{\rm iter})})_{n
  \in I^{(Q_{\rm iter})}}$ have the same law for the modified
algorithm as for the original one.
\end{Lem}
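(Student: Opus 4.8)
The plan is to argue by induction on the iteration index $q$. Let $H(q)$ be the conjunction of two statements: (a) the law of the weighted system $(X^{(n,q)},G^{(n,q)})_{n\in I^{(q)}}$, together with the level $Z^{(q)}$ and the label set $I^{(q)}$, is the same under the original and the modified resampling rule; and (b) for each of the two algorithms, conditionally on $\calF^{(q)}=\calF^{(q)}_{Z^{(q)}}$ the working replicas $(X^{(n,q)})_{n\in I^{(q)}_{\rm on}}$ are mutually independent, the one with label $n$ being distributed according to $\pi_{Z^{(q)}}(X^{(n,q)},\cdot)$. The estimator, the stopping time $Q_{\rm iter}$ and the returned system $(X^{(n,Q_{\rm iter})},G^{(n,Q_{\rm iter})})_{n\in I^{(Q_{\rm iter})}}$ are measurable functions of the sequence $(\calX^{(q)},Z^{(q)})_q$, so establishing $H(q)$ for every $q$ proves the lemma. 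The initialization step is verbatim the same in both algorithms, which gives (a) of $H(0)$; part (b) of $H(0)$ is the consistency relation of Assumption~\ref{ass:key_1} read at the initial stopping level $Z^{(0)}$, the replicas being i.i.d.\ and hence conditionally independent.

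For the induction step, write $\calG^{(q)}=\calF^{(q)}\vee\sigma(P^{(q+1)})$ for the information available just before the resampling step; it stores the truncated paths $(x_{t\wedge\tT_{Z^{(q)}}(x)})_{t}$ of the working replicas (but \emph{not} their continuations above level $Z^{(q)}$) together with the children-parent map. The key claim is that, under either rule, conditionally on $\calG^{(q)}$ the freshly defined system $(X^{(n,q+1)})_{n\in I^{(q+1)}_{\rm on}}$ is made of mutually independent paths, the one with label $n$ being distributed according to $\pi_{Z^{(q)}}(X^{(P^{(q+1)}(n),q)},\cdot)$. For a child $n\in I^{(q+1)}_{\rm new}$ this is exactly the definition of the resampling step, the random numbers used there being independent of everything drawn before. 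For a parent kept unchanged in the original algorithm, its conditional law given $\calG^{(q)}$ is already $\pi_{Z^{(q)}}(X^{(n,q)},\cdot)$: this is part (b) of $H(q)$, the enrichment of $\calF^{(q)}$ by $\sigma(P^{(q+1)})$ being harmless since $P^{(q+1)}$ is drawn with independent randomness. In the dynamic setting of Section~\ref{sec:Markov} this is simply the strong Markov property of the chain at the stopping time $\tau_{Z^{(q)}}$, the survival event $\{\Xi(X^{(n,q)})>Z^{(q)}\}=\{\tT_{Z^{(q)}}(X^{(n,q)})<+\infty\}$ being $\filt^{X^{(n,q)}}_{Z^{(q)}}$-measurable by Lemma~\ref{lem:stop} and thus revealing nothing about the continuation.

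Granting the claim, both algorithms produce the same conditional law for $(X^{(n,q+1)})_{n\in I^{(q+1)}_{\rm on}}$ given $\calG^{(q)}$: in the modified algorithm every working replica is freshly resampled and so trivially has this law, while in the original one the children contribute by construction and the kept parents contribute through the conditional-law identity, all pieces being conditionally independent because children use independent random numbers. The label set $I^{(q+1)}$, the weights~\eqref{eq:weight_AMS}, and then the next level $Z^{(q+1)}$ and indicator $S^{(q+1)}$ are deterministic functions of $\calX^{(q+1)}$ and $P^{(q+1)}$ that are identical in the two algorithms; since by (a) of $H(q)$ the law of $(\calF^{(q)},P^{(q+1)})$ is already common, integrating the matching conditional laws yields (a) of $H(q+1)$. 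Part (b) of $H(q+1)$ follows from the claim by passing from the truncation at level $Z^{(q)}$ to the finer truncation at $Z^{(q+1)}\ge Z^{(q)}$ via Assumption~\ref{ass:key_1}, conditional independence being preserved.

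The delicate point, on which both the propagation of (b) and the treatment of kept parents rest, is the use of Assumption~\ref{ass:key_1} at the \emph{random} stopping level $Z^{(q)}$ rather than at a deterministic level. Justifying this is precisely the optional-stopping argument underlying the unbiasedness proof of Section~\ref{sec:unbiased}, which relies on the right-continuity of $z\mapsto\pi_z(\ph)(x)$ (Assumption~\ref{ass:key_2}); equivalently, in the Markov case it is the legitimacy of the strong Markov property at $\tau_{Z^{(q)}}$ when $Z^{(q)}$ is built from the whole ensemble, the decisive enabling facts being that $Z^{(q)}$ is a $(\calF^{(q)}_z)_{z}$-stopping level and that the selection of survivors is $\filt_{Z^{(q)}}$-measurable.
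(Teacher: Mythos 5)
Your argument is correct and is essentially the paper's own proof spelled out: the paper disposes of this lemma in one line by invoking Proposition~\ref{pro:1}, $(ii)_q$ (conditional independence of the post-resampling system given $\calF^{(q)}\vee\sigma(P^{(q+1)})$ with the product law $\prod_n\pi_{Z^{(q)}}(X^{(P^{(q+1)}(n),q)},\cdot)$, together with the identity~\eqref{eq:resampling_identity}) and an induction on $q$, and your ``key claim'' is exactly that statement, with your part (b) playing the role of $(i)_q$ and your final paragraph deferring, as the paper does, to the optional-stopping propagation of Lemma~\ref{lem:Doob}. No gap.
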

This lemma is easily checked using Proposition~\ref{pro:1}, $(ii)_q$ and an induction argument on $q \geq 0$.
%This lemma is easily checked following the same induction argument as in the proof of Proposition~\ref{pro:1}.

% A possible ''intuitive'' picture may be summarized by the following identity
% \begin{align*}
%   \text{AMS with discrete levels} =
%  \text{non-adaptive multi-level splitting} = \text{sequential sampling}.
% \end{align*}
% \comment{C-E: avec des mots}

The discussion above thus shows that the AMS algorithm can be recast in the
framework of sequential sampling. The interpretation of multilevel splitting methods as a sequential
sampling method is not new (see {\it
  e.g.}~\cite{JohansenDel-MoralDoucet2005}). We refer to the classical
monographs~\cite{DoucetDe-FreitasGordon2001,Del-Moral2004} for
respectively applications of  Sequential Monte-Carlo methods in
Bayesian statistics, and a comprehensive associated
mathematical analysis. In particular, from the point view
of~\cite{Del-Moral2004}, the Adaptive Multilevel Splitting method for
Markov chains (namely the dynamical setting) considered here can be interpreted as
a time-dependent Feynman-Kac particle model with hard obstacles
where: (i) the time index is given by the discrete levels $z$, (ii)
the particles are paths of the Markov chain stopped at
$\tT_{z}$, and (iii) the hard obstacle at level $z$ corresponds with
reaching $A$ before the $z$-level set. Note however that strictly
speaking, the version presented in the present section slightly differs
from the classical presentation of Feynman-Kac particle
models in~\cite{Del-Moral2004} since  all the replicas are used in
the estimators, including those who have reached $A$. But this does
not change the global picture.

To conclude, let us recall that the construction of unbiased
estimators for averages of the form~\eqref{eq:phi} is standard for SMC
algorithms. In the SMC language, averages such as~\eqref{eq:phi} are
called non-normalized averages, normalized averages being 
conditional expectations, namely ratios of two such averages.
From this point of view, the 
unbiasedness result of the present work (see
Theorem~\ref{th:unbiased}) is therefore not a surprise. Actually, another
strategy of proof of Theorem~\ref{th:unbiased} would be to rely on
general unbiasedness results for SMC samplers, using the equivalence
between AMS and SMC described above for discrete reaction coordinates,
and then to extend the result to continuous reaction coordinates by
considering a continuous limit of discrete levels. 

% This latter idea is nonetheless a possible alternative strategy, left for potential future work.

% \comment{J'ai perdu du temps a trouver dans la litterature, les papiers sur multi-level spliting dous forme SMC/Delmoral. J'ai rien trouver de bien convaincant car c'est le cas statique qui est mise en valeur etc... Bref j'ai mis unereference mais boulot un peu torche.... Solution de repli citer les papiers SMC inter Rare Event sans faire plus de commentaires ...}
% \comment{C-E: les travaux de Cerou-Guyader-Del Moral et al ?}

\subsection{Examples of algorithmic variants}\label{sec:variants}

In this section, we consider the setting and the AMS
algorithm of Section~\ref{sec:Markov}, and we propose variants which
fit into the Generalized Adaptive Multilevel Splitting framework and may improve the efficiency
of the algorithm in several directions (see
Sections~\ref{sec:remove},~\ref{sec:random} and~\ref{sec:select}). In particular,
Theorem~\ref{th:unbiased} applies to the three examples detailed
below. Moreover, we also illustrate the interest of the general
setting we have introduced by providing in Section~\ref{sec:bridge}  an example 
which does not enter into the standard dynamical setting (sampling of paths of
Markov chains) and for which the AMS algorithm could be used.

\subsubsection{Removing extinction}\label{sec:remove}

We first introduce a variant of the AMS algorithm in
the Markov chain setting (Section~\ref{sec:Markov}), which is designed
in order to remove the possibility of equality of levels for two
different replicas -- this phenomenon is explained in
Remark~\ref{rem:same_max} for the AMS algorithm. This variant enters
into the GAMS framework and thus leads to unbiased estimators. With
this variant, exactly $k$ replicas are resampled at each iterations. In particular, extinction of the
system of replicas cannot occur. However, the algorithm requires the
use of a rejection procedure for each resampling, which may slow down
the simulation. Let us now describe this variant in detail.

Let $z\in\R$ be a level and $x\in\calP$. The definition
(see~\eqref{eq:Tz}) of $\tT_z(x)$ remains the same, but the resampling
kernel $\pi_z$ defined by~\eqref{eq:piz_chain_1}--\eqref{eq:piz_chain}
is modified as follows. Given $x\in\calP$, the probability law $\pi_z(x,dx')$ is the distribution of a random variable $Y \in \calP$ sampled as follows:
\begin{itemize}
\item For $t\leq \tT_z(x)-1$, $Y_{t}=x_{t}$.
\item When $\tT_z(x)<+\infty$, $Y_{\tT_z(x)}$ is sampled using the
  transition kernel $P(x_{\tT_z(x)-1} , \, . \, )$ of the Markov
  chain, conditionally on $\xi(Y_{\tT_z(x)})>z$. This can be done for example using a rejection procedure: a sequence
$(\mathcal{Y}_\ell)_{\ell\in\N^*}$ of i.i.d. random variables
distributed according to $P(x_{\tT_z(x)-1}, \, . \, )$ is sampled, and
one considers $Y_{\tT_z(x)}=\mathcal{Y}_{L}$ where
$L=\inf\left\{\ell\in\N^* : \xi(\mathcal{Y}_{\ell})>z\right\}$. %\comment{j'ai enlev\'e la discussion dessous.}

%\change{This scheme works if and only if $L<+\infty$ almost surely, which is equivalent to
%$$\int_{y\in\calS}1_{\xi(y)>z}P(x_{\tT_z(x)-1},dy)>0.$$If the latter condition does not hold, we just sample according to the transition kernel: $Y_{\tT_z(x)}=\mathcal{Y}_{1}$. Notice that when $x=X$ is a trajectory of the Markov chain such that $\tau_z<+\infty$, the condition is satisfied with probability $1$.}{} \comment{Je suis pour enlever la subtilite de ce point. On peut en discuter ou le mettre dans une remarque. En effet la condition est necessairement verifiee au moins p.s.}

\item For $t > \tT_z(x)$, the Markov transition kernel $P$ is used to
  sample the end of the trajectory, up to the stopping time $\tT_A(Y)$
  where the path is stopped: $$\Law( Y_t \st   (Y_s)_{0 \leq s \leq t-1} ) = P( Y_{t-1}, \, . \, ).$$
\end{itemize}

The definition of the filtration $(\filt_z)_{z\in\R}$ needs to be adapted in order to check Assumption~\ref{ass:key_1}.
%%ince the information used to resample a new replica from the path $x\in\calP$ is the trajectory of $x$ until $\tT_z(x)-1$; the value of $x_{\tT_z(x)}$ is not taken into account. Precisely, 
The filtration $\filt_z$ is the smallest $\sigma$-field which makes the application
$x \in \calP \mapsto (\tT_z(x), (x_{t \wedge (\tT_z(x)-1)})_{t \ge 0})
\in  \N \times \cal P$ measurable:
\begin{equation}\label{eq:filtz_Markov_bis}
\filt_z=\sigma(x \mapsto (\tT_z(x),(x_{t \wedge (\tT_z(x)-1)})_{t \ge 0})).
\end{equation}
Notice that we need $\tT_z(x)$ in addition to $(x_{t \wedge
  (\tT_z(x)-1)})_{t \ge 0}$ since we need to know the time at which the
chain reaches the level $z$.
%
% %Assumption \ref{ass:key_2} is clearly satisfied; Assumption \ref{ass:key_3} is proved with the same arguments as in the classical AMS setting. The only point we need to check is Assumption \ref{ass:key_1}.
%\begin{Rem}[Proof of Assumption~\ref{ass:key_1}]
% 
%
%
% In order to check Assumption~\ref{ass:key_1}, let us introduce the auxiliary Markov chain with values in $\calS\times\left\{0,1\right\}$:
%  $\tilde{X}_t=(X_{t},I_t)$ where $I_t=\mathds{1}_{\xi(Z_t)>z}$, $Z_t$
%  being generated independently with law $Z_t\sim P(X_t, \, . \, )$. Then Assumption~\ref{ass:key_1} follows from the strong Markov property applied to the family of stopping times indexed by $z$ defined by $\tilde{\tau}_z=\inf\left\{t \geq 0; \, I_t=1\right\}$.
\comment{Tony: A checker...}
In order to check Assumption~\ref{ass:key_1}, let us introduce the auxiliary Markov chain with values in $\calS\times\left\{0,1\right\}$:
 $\tilde{X}_t=(X_{t},\mathds{1}_{\xi(X_{t+1})>z})$. Then
 Assumption~\ref{ass:key_1} follows from the strong Markov property
 applied to $\tilde{X}_t$ and the family of stopping times indexed by $z$ defined by
 $\tilde{\tau}_z=\inf\left\{t \geq 0 :  \mathds{1}_{\xi(X_{t+1})>z}
   =1\right\}$. Indeed, $\calF_{\tilde{\tau}_z}=\filt^X_z$, and $\E(\varphi(X)|\calF_{\tilde{\tau}_z})=\pi_z(\varphi)(X)$.

  %$ (independently of the rest for each iteration $n\in\N$). 

% The random variable $\tilde{\tau}_z=\inf\left\{t \geq 0; \, I_t=1\right\}$ is a stopping time for the natural filtration of $\tilde{X}_t, \, t \geq 0$. Assumption~\ref{ass:key_1} is then a simple consequence of the Strong Markov property, applied at time $\tilde{\tau}_{z'}$.% for test functions $\phi$ depending only on the trajectory of the first component of $\tilde{X}$.
%\end{Rem}

With this modification of the classical AMS algorithm of
Section~\ref{sec:Markov}, it is easy to check that the event that two
replicas have the same maximum level is of probability zero, at least if
the natural additional property is satisfied: if $Y_1$ and $Y_2$ are
generated according to $P(x_{\tT_z(x)-1}, \, . \, )$, where
$x\in\calP$ is such that $\tT_z(x)<+\infty$, then
$\P(\xi(Y_1)=\xi(Y_2))=0$. This additional condition is satisfied in
many practical cases, for example if the Markov Chain is defined as
the Euler-Maruyama discretization of a Langevin dynamics,
see~\eqref{eq:Lang}.

\comment{TONY: J'ai viré la modification supplémentaire qui ne me
  semble pas nécessaire: ou bien le lecteur sait faire, ou bien il n'a
pas compris et ce n'est pas grave.}
 % Moreover, the following construction gives a possible to overcome this issue in general. We enlarge the state space $\calS$ of the Markov chain into $\calS\times\R$, and we define $\overline{X}_t=(X_t,U_t)$, where $(U_s)_{s\in\N}$ is a sequence of independent and identically distributed random variables distributed uniformly on $(0,1)$. Given $\delta>0$, we define a modified reaction coordinate $\xi^{\delta}:(y,u)\in\calS\times\R\mapsto \xi(y)+\delta u$. One then performs the algorithm, for the Markov chain $\overline{X}=(X_t,U_t)_{t\in\N}$ with the new reaction coordinate $\xi^{\delta}$.

%we recover the quantities of interest for $X$ with observables depending only on the first component of $\overline{X}$.

\subsubsection{Randomized level computation}\label{sec:random}
%Compute the level $Z^{(q)}$ with a \emph{sub-sample} of replicas. Explain it may simplify the sorting problem and help parallel computing.

To run the AMS algorithm of Section~\ref{sec:Markov}, a sorting
procedure of the replicas according to their maximum levels is
required. More precisely,  at the initialization step, all replicas
must be sorted according to their maximum levels; at further
iterations, the procedure is faster, since only the new replicas that
have been resampled need to be sorted.% -- even if the size of this set of replicas may be much greater than $k$.% As a consequence, it is necessary to sort again a family of levels of size that may be comparable with $\nrep$.

It is possible to propose algorithms within the GAMS framework which
never require the sorting of the entire system of replicas. The idea
is to sample at iteration $q$ a (small) random subset
$\mathcal{I}^{(q+1)}\subset I^{(q+1)}_{\rm on}$. The level 
$Z^{(q+1)}$ is then defined as the $k$-th order statistics of maximum levels computed
only on the replicas with labels in $\mathcal{I}^{(q+1)}$.
%
%Indeed, in the level computation step we only require a stopping level criterion, with respect to some possibly enriched filtration (with the additional random variables $U_{l}^{(q)}$ in the definition of $\calF_{z}^{(q)}$).% It is thus possible to compute an order statistic on a (small) subsample of replicas $I_{l}^{(q-1)}\subset I^{(q-1)}$.% \change{in subsets of a given size $L^{(q)}$ (which can also be chosen at random, conditionally on the past of the algorithm).}{} \comment{Je ne comprends pas cette phrase}.
%
Notice that such algorithms introduce some flexibility in the implementation of the level computation, which may be useful to design efficient parallelization strategies to speed up the computation.% \change{ for instance we could split the system of replicas in sub-systems where independent resampling computations are performed in parallel. At each iteration, only the level computation step must involve all replicas.}{} \comment{}

Notice that Assumption~\ref{ass:algo} on the stopping-levels
$(Z^{(q)})_{q \ge 0}$ is then satisfied by slightly modifying the definition of
the $\sigma$-fields indexed by $z$ in the level computation step as follows:
$$
\calF^{(q+1)}_z \eqdef \calF^{(q)} \vee \sigma( P^{(q+1)} ) \vee \filt_z^{\calX^{(q+1)}} \vee \sigma(\mathcal{I}^{(q+1)}).
$$
%where $U^{(q)}_l$ is a random variable independent of $\calF^{(q)}
%\vee \sigma( P^{(q+1)} ) \vee \filt_z^{\calX^{(q+1)}}$ for any $z\in
%\R$.

\subsubsection{Additional selection}\label{sec:select}
%\comment{Complete}

It is also possible to modify the branching rules so that larger
branching numbers are affected to replicas which are in areas which
have been identified as important to get an accurate estimate of
$\pi(\ph)$ (in the spirit of a sequential importance sampling algorithm). For instance, in the bi-channel case of
Section~\ref{sec:bi-channel}, it is possible to enforce a higher
probability of branching for replicas which visit the channel which is
not sampled sufficiently well. The only requirements to implement
these strategies is that the branching numbers are defined in such a
way that Assumption~\ref{ass:algo} is satisfied.

 %Give an explicit example of branching rules: $3$ types of replicas retires, working in higher channel, working in lower channel. 

%\comment{Expliquer ici aussi en quoi le cadre general permet d'appliquer l'algo e autre chose que des chaines de Markov ..... }
%\comment{Mathias: Je suis pas convaincu de devoir rajouter en qqs lignes la dessus.}

\subsubsection{Application to the sampling of a Gaussian bridge}\label{sec:bridge}

We presented above variants of the AMS algorithm. The GAMS framework also allows for
different general setting: splitting algorithms can be
used to sample other random variables than paths of Markov chains with levels defined as
$\sup\{\xi(X_{t \wedge \tau_A})_{t \ge 0}\}$ for some stopping time $\tau_A$ and
some reaction coordinate function $\xi$. Actually, under appropriate assumptions,
the following cases also enter into the setting of the GAMS framework:
path-dependent reaction coordinates (duration of the path, integral over
the path), sampling of continuous time
stochastic processes (diffusions, jump processes, branching processes), sampling of non-homogeneous stochastic processes,
 etc... \comment{TONY: OK ?}
% Pour les path dependent importance functions, il suffit d'étendre le
% processus markovien: prendre le temps dans l'espace d'état pour
% duration of the path, prendre l'intégrale pour integral over the path
% 
Let us discuss in this section as an example the sampling of
a Gaussian bridge.

 Let $\kappa \in \N^\ast$ be given, and consider the following Gaussian bridge distribution in $\calP=\R^\kappa$:
\[
\pi( d x_1 \ldots d x_\kappa) = \frac{1}{\mathcal{Z}_\kappa} {\rm e}^{ -\frac{1}{2}\p{x_1^2+(x_1-x_2)^2+ \ldots +(x_{\kappa-1}-x_\kappa)^2 +x_\kappa^2 }} d x_1 \ldots d x_\kappa 
\]
where $\mathcal{Z}_\kappa > 0$ is the appropriate normalization constant. This distribution is
a discrete version of a Brownian Bridge, and can be interpreted as a
Gaussian random walk $(X_1,X_2, \ldots, X_{\kappa+1})$ starting from
$X_0=0$ and conditioned on $\{X_{\kappa+1}=0\}$.

The definition of the maximum level is
$\Xi(x)= \max \{ x_i: i \in \{1, \ldots, \kappa\}\}$
and we wish to implement the AMS algorithm to compute small probabilities of the form $\P(\Xi(X) > z) $ for some $z >0$. 

For this purpose, let us define
$
\tT_z(x) = \inf\{i \in \{ 1, \ldots, \kappa\} : x_i > z \}\in\left\{1,\ldots,\kappa,+\infty\right\},
$
and consider the filtration
$
\filt_z = \sigma(x_1, \ldots , x_{\tT_z(x)}). 
$

Let us now define the resampling kernels $\pi_z(x,dx')$. For a given
$x \in \R^\kappa$ assuming that $\tT_z(x) < + \infty$, let us introduce a
random variable $X' \in \R^\kappa$ such that
$X'_i = x_i$ for $i\in \{1, \ldots ,\tT_z(x)\}$ 
and
$
(X'_{\tT_z(x) +1}, \ldots, X'_\kappa) \sim \calB_{\kappa-\tT_z(x)}(x_{\tT_z(x)},0) 
$
where  for each $m \geq 1$, and $x_0,x_{m+1} \in \R$, $\calB_m(x_0,x_{m+1})$ denotes the Gaussian bridge distribution
\[
\calB_m(x_0,x_{m+1}) = \frac{1}{\mathcal{Z}_m} {\rm e}^{ -\frac{1}{2}\p{(x_0-x_1)^2+(x_1-x_2)^2+ \ldots +(x_{m-1}-x_m)^2 +(x_m-x_{m+1})^2 }} d x_1 \ldots d x_m.
\]
% $
% \frac{1}{Z_{\tT_z(x)}} {\rm e}^{ -\frac{1}{2}\p{(x_{\tT_z(x) }-x_{\tT_z(x)+1})^2+ \ldots +(x_{k-1}-x_k)^2 +x_k^2 }} d x_{\tT_z(x)+1} \ldots d x_k.
% $
We then define
\begin{equation*}
  \pi_z(x,dx') = \Law(X').
\end{equation*}

This general setting enters into the GAMS framework, and satisfies in
particular Assumptions~\ref{ass:key_2} and~\ref{ass:key_1}
above. Assumption~\ref{ass:key_1} is a consequence of the following
Lemma, applied to $K=\tT_z(X) \wedge (\kappa-1)$ (using the fact that $\sigma(X_1, \ldots,
X_{\tT_z(X)})=\filt^X_z$). \comment{TONY: OK ?}
\begin{Lem}
Let $(X_1, \ldots, X_\kappa) \sim \calB_\kappa(x_0,x_{\kappa+1})$ and let $K$ be a
stopping time with respect to the natural filtration of $(X_1, \ldots,
X_\kappa)$ ({\it i.e.} for any $l \in \{1, \ldots, \kappa\}$, $\{K \le l\}
\subset \sigma(X_1, \ldots,
X_l)$) such that $K<\kappa$. Then, 
\[
\Law((X_{K+1}, \ldots, X_\kappa) \st (X_1, \ldots, X_{K})) = \calB_{\kappa-K}(X_{K},x_{\kappa+1}). 
\]
\end{Lem}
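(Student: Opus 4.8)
The plan is to reduce the statement to the ordinary Markov property at deterministic indices and then to propagate it to the random index $K$ by freezing $K$ on its level sets. The structural fact that makes everything work is that $\calB_\kappa(x_0,x_{\kappa+1})$ is the law of a (non-homogeneous) Markov chain: its density factorizes into a product of nearest-neighbour factors $\prod_i \psi(x_i,x_{i+1})$ (the boundary factors involving only the fixed parameters $x_0$ and $x_{\kappa+1}$), and such a pairwise factorization over a path graph is exactly the characterization of the Markov property. Consequently the strong Markov property is available—as recalled earlier in the paper, it holds automatically for discrete-time Markov processes—and the lemma is essentially its restatement in the bridge language.

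First I would treat a deterministic index $l \in \{1,\ldots,\kappa-1\}$. Writing the joint density of $\calB_\kappa(x_0,x_{\kappa+1})$ and viewing it as a function of $(x_{l+1},\ldots,x_\kappa)$ with $(x_1,\ldots,x_l)$ fixed, all the factors $(x_{i-1}-x_i)^2$ with $i \le l$ depend only on the conditioning variables and are absorbed into the normalizing constant. What remains is proportional to
\[
\exp\!\p{-\tfrac12\bigl[(x_l-x_{l+1})^2+\textstyle\sum_{i=l+1}^{\kappa-1}(x_i-x_{i+1})^2+(x_\kappa-x_{\kappa+1})^2\bigr]},
\]
which, after the relabelling $x_{l+j}\leftrightarrow y_j$, is precisely the density of $\calB_{\kappa-l}(x_l,x_{\kappa+1})$. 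In particular the conditional law depends on $(x_1,\ldots,x_l)$ only through $x_l$, so
\[
\Law\p{(X_{l+1},\ldots,X_\kappa)\st (X_1,\ldots,X_l)}=\calB_{\kappa-l}(X_l,x_{\kappa+1}) \as
\]

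Next I would extend this to the stopping time $K$. Since $K<\kappa$ almost surely and $K$ is a stopping time for the natural filtration, $K$ takes values in $\{1,\ldots,\kappa-1\}$ and $\{K=l\}\in\sigma(X_1,\ldots,X_l)$ for each such $l$. The precise meaning of the asserted identity is then checked sector by sector: for every $l$ and all bounded measurable $g:\R^l\to\R$ and $h:\R^{\kappa-l}\to\R$ one wants
\[
\E\bigl[\mathds{1}_{\{K=l\}}\,g(X_1,\ldots,X_l)\,h(X_{l+1},\ldots,X_\kappa)\bigr]=\E\bigl[\mathds{1}_{\{K=l\}}\,g(X_1,\ldots,X_l)\,\calB_{\kappa-l}(X_l,x_{\kappa+1})(h)\bigr].
\]
Because $\mathds{1}_{\{K=l\}}\,g(X_1,\ldots,X_l)$ is $\sigma(X_1,\ldots,X_l)$-measurable, this follows immediately from the deterministic-index identity of the previous step by conditioning on $\sigma(X_1,\ldots,X_l)$; summing over $l\in\{1,\ldots,\kappa-1\}$ then recovers the full statement.

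The only genuine subtlety is bookkeeping rather than analysis: both the number of conditioning coordinates and the number of conditioned coordinates are random, so the conditional law lives on a state space of random dimension $\R^{\kappa-K}$. This is exactly what the decomposition over the level sets $\{K=l\}$ resolves, since on each $\{K=l\}$ the dimension is frozen and the deterministic computation applies verbatim. Everything else—the density factorization and the identification with $\calB_{\kappa-l}$—is a routine Gaussian calculation, and I do not expect any real obstacle there.
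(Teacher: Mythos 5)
Your proposal is correct and follows essentially the same route as the paper's (very terse) proof: first establish the identity for a deterministic index via the factorization of the conditional density, then transfer it to the random index $K$ by decomposing over the level sets $\{K=l\}$ and using that these events are $\sigma(X_1,\ldots,X_l)$-measurable. The only difference is that you spell out the details the paper leaves implicit.
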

\begin{proof}
  First, the lemma is easily checked for $K$ a deterministic integer, using 
  the formula for conditional densities. Then the result is proven by conditioning on each value of $K$ and using the fact that $K$ is a stopping time.  
\end{proof}
This example can be generalized in various ways. First, it is possible
to build resampling kernels $\pi_z(x,dx')$ such that only the components
 $x_i$ such that $x_i > z$ are resampled. Second, the same kind of
 algorithms can be applied to
discrete Gaussian Markov random fields.

% \begin{Rem}[]
% We have chosen to present in detail one example. There are also many
% other cases which enter the setting of the GAMS framework, under
% appropriate assumptions: continuous time
% stochastic processes, non-homogeneous stochastic processes,
% path-dependent importance functions (duration of the path, integral over
% the path), etc... \comment{TONY: OK ?}
% % Pour les path dependent importance functions, il suffit d'étendre le
% % processus markovien: prendre le temps dans l'espace d'état pour
% % duration of the path, prendre l'intégrale pour integral over the path
% \end{Rem}

\section{The unbiasedness theorem}\label{sec:unbiased}
In the present section, the unbiasedness of the empirical distribution over weighted replicas is proven. This is the content of Theorem~\ref{th:unbiased}.
We first provide in Section~\ref{sec:summary} a summary of the notation used in the GAMS framework
of Section~\ref{sect:GAMS}. The latter will be helpful to follow the
statements and proofs of the present section. The main result is
stated in Section~\ref{sec:statement} and the last two sections~\ref{sec:proof_th} and~\ref{sec:proofs}
are devoted to the proof of this result.

\subsection{Summary of GAMS notation}\label{sec:summary}
We follow the algorithmic order of the GAMS framework of
Section~\ref{sect:GAMS}. In the following, $\ph: \calP \to \R$ denotes
a  bounded test function. We also introduce below a new notation for
an intermediate empirical distribution (see~\eqref{eq:pi_qhalf}).

\paragraph{The initialization step ($q=0$)} The system of weighted replicas is
denoted by $\calX^{(0)} = (X^{(n,0)},G^{(n,0)})_{n \in I^{(0)}}$ with
  uniform weights $G^{(n,0)}=1/ \card I^{(0)}$
for $n \in
  I^{(0)}$. The first level is $Z^{(0)}$ with the associated $\sigma$-field $\pcalF{0}=\filt_{Z^{(0)}}^{\calX^{(0)}}$.

\paragraph{Iterations} Iterate on $q \ge 0$ the following steps.

\paragraph{The stopping criterion}  If the stopping criterion is
satisfied, the algorithm stops at this stage, and we set $q=Q_{\rm
  iter}$. At the beginning of iteration $q$, the weighted empirical
distribution estimator (defined by~\eqref{GAMS_estimator}) is: 
\[
 \dps \hat{\pi}^{(q)}(\ph)  = \sum_{n \in I^{(q)}} G^{(n,q)} \ph(X^{(n,q)}).
\]

\paragraph{The splitting (branching) step}
The random branching numbers are denoted by $(B^{(n,q+1)})_{n \in
  I^{(q)}}$, the updated set of labels $I^{(q+1)}=(I^{(q)} \setminus
I^{(q+1)}_{\rm killed})\sqcup I^{(q+1)}_{\rm new} $, the associated
children-parent map $P^{(q+1)}:I^{(q+1)} \to I^{(q)} \setminus I^{(q+1)}_{\rm killed}$, and the associated new weights $(G^{(n',q+1)})_{n' \in I^{(q+1)}}$. All the latter variables are sampled conditionally on $\calF^{(q)}$, and are $\calF^{(q)} \vee \sigma(P^{(q+1)})$ measurable.
%The $\sigma$-field  generated by $P^{(q+1)}$ contains the $\sigma$-field generated by $I^{(q)}$, $I^{(q+1)}$, $(B^{(n,q+1)})_{n \in I^{(q)}}$ and the new weights $(G^{(n',q+1)})_{n' \in I^{(q+1)}}$.
The weighted empirical distribution at this stage is denoted by
\begin{equation}
  \label{eq:pi_qhalf}
  \dps \hat{\pi}^{(q+1/2)}(\ph)  = \sum_{n' \in I^{(q+1)}} G^{(n',q+1)} \ph(X^{(P^{(q+1)}(n'),q)}).
\end{equation}

\paragraph{The resampling step} The system of weighted replicas after
resampling is denoted by $\calX^{(q+1)} = (X^{(n',q+1)},G^{(n',q+1)})_{n' \in I^{(q+1)}}$.

\paragraph{The level computation step} The new level is denoted by $Z^{(q+1)}$, the associated $\sigma$-field is defined as
$\calF^{(q+1)}\eqdef \calF^{(q)} \vee \sigma( P^{(q+1)} ) \vee
\filt_{Z^{(q+1)}}^{\calX^{(q+1)}}.$

\medskip

As already explained at the end of
Section~\ref{sec:GAMS_precise_def}, we will use use in the
following the whole
sequence $(\calX^{(q)})_{q \ge 0}$ of weighted replicas as well as the
whole sequence of related filtrations, which are simply obtained by
considering the algorithm without stopping criterion.

\subsection{Statement of the main result}\label{sec:statement}
The main theoretical result of this paper is the following.
\begin{The}\label{th:unbiased}
Let $(\calX^{(q)})_{0 \le q \le Q_{\rm iter}}$ be the sequence of
random systems of weighted replicas generated by an algorithm which
enters into the
GAMS framework of Section~\ref{sect:GAMS}. In particular, the
Assumptions~\ref{ass:key_2} and~\ref{ass:key_1}   on the general
setting (see Section~\ref{sec:gen_set}) as
well as the Assumption~\ref{ass:algo} 
on the stopping criterion, branching numbers and level computations
(see Section~\ref{sec:GAMS_to_algo}) are supposed to hold.

Assume moreover that the number of iterations $Q_{\rm iter}$ is almost
surely finite (this condition writes $\P(Q_{\rm iter} < + \infty)=1$) and that one of
the following conditions is satisfied:
\begin{itemize}
\item $Q_{\rm iter}$ is bounded from above by a deterministic constant,
  % $q_{\rm max}$.
\item or the almost sure mass conservation~\eqref{eq:mass_cons} is satisfied.
\end{itemize}
%Then the estimator is unbiased at any iteration $q\leq Q_{\rm iter}$ of the algorithm i.e.
%\[
%%\E(\hat{\pi}^{(q)})=\pi(\ph).
%\E\p{ \hat{\pi}^{(q)}(\ph) } = \pi(\ph) (\equiv \E\p{ \ph }),
%%\sum_{n \in I}^{{(Q_{\rm iter})}} G^{(n,Q_{\rm iter})} \delta_{X^{(n,Q_{\rm iter})} } } = \pi(\ph).
%\]
Then, for any bounded measurable test function $\ph:\calP \to \R$, $$\E\p{ \hat{\pi}^{(Q_{\rm iter})}(\ph) } = \pi(\ph).$$ 
\end{The}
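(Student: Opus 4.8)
The plan is to exhibit an auxiliary $\p{\calF^{(q)}}_{q \ge 0}$-martingale whose terminal value has the same expectation as the estimator $\hat{\pi}^{(Q_{\rm iter})}(\ph)$, and then to invoke Doob's optional stopping theorem at the $\p{\calF^{(q)}}$-stopping time $Q_{\rm iter}$ (a stopping time by Proposition~\ref{propo_Qiter_stopping}). The natural candidate is
\[
\calM^{(q)} \eqdef \sum_{n \in I^{(q)}} G^{(n,q)} \, \pi_{Z^{(q)}}(\ph)(X^{(n,q)}),
\]
which, unlike $\hat{\pi}^{(q)}(\ph)$ itself, is $\calF^{(q)}$-measurable: the weights and the level $Z^{(q)}$ are $\calF^{(q)}$-measurable, and $\pi_{Z^{(q)}}(\ph)(X^{(n,q)})$ depends on $X^{(n,q)}$ only through its stopped version at level $Z^{(q)}$. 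First I would record a conditioning lemma (Lemma~\ref{lem:Doob}): if a replica $X$ has law $\pi_z(x,\cdot)$ and $Z \ge z$ is a stopping level for $\p{\filt_w^X}_{w}$, then $\E\p{\ph(X) \st \filt_Z^X} = \pi_Z(\ph)(X)$. This holds because $w \mapsto \pi_w(\ph)(X) = \E\p{\ph(X)\st\filt_w^X}$ is the Doob martingale of $\ph(X)$ along the level-indexed filtration (Assumption~\ref{ass:key_1}), which is right-continuous thanks to Assumption~\ref{ass:key_2}; optional stopping for this continuous-index right-continuous martingale yields the claim. In parallel I would carry, by induction on $q$, the invariant that, conditionally on $\calF^{(q)}$, the replicas $\p{X^{(n,q)}}_{n \in I^{(q)}}$ are independent with $X^{(n,q)} \sim \pi_{Z^{(q)}}(X^{(n,q)},\cdot)$; this is exactly what makes $\calM^{(q)} = \E\p{\hat{\pi}^{(q)}(\ph)\st\calF^{(q)}}$.

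The martingale identity $\E\p{\calM^{(q+1)}\st\calF^{(q)}} = \calM^{(q)}$ I would establish in two moves, one per algorithmic substep. For the resampling, conditioning on $\calF^{(q)}\vee\sigma(P^{(q+1)})$ and using that a child satisfies $X^{(n',q+1)}\sim\pi_{Z^{(q)}}(X^{(P^{(q+1)}(n'),q)},\cdot)$ while an unresampled replica is handled by Lemma~\ref{lem:Doob}, every term collapses to $\pi_{Z^{(q)}}(\ph)(X^{(P^{(q+1)}(n'),q)})$; hence $\E\p{\hat\pi^{(q+1)}(\ph)\st\calF^{(q)}\vee\sigma(P^{(q+1)})} = \hat\pi^{(q+1/2)}(\psi)$ with $\psi \eqdef \pi_{Z^{(q)}}(\ph)$. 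For the branching, the weight rule~\eqref{weights}, grouping children by parent and using $\card\set{n' : P^{(q+1)}(n')=n} = B^{(n,q+1)}$, gives the telescoping identity $\E\p{\hat\pi^{(q+1/2)}(\psi)\st\calF^{(q)}} = \sum_{n\in I^{(q)}} G^{(n,q)}\psi(X^{(n,q)}) = \calM^{(q)}$, the factor $\E\p{B^{(n,q+1)}\st\calF^{(q)}}$ cancelling exactly against the denominator in~\eqref{weights}. Composing the two and using the tower property (together with $\calF^{(q)}\subset\calF^{(q+1)}$ and $\calM^{(q+1)} = \E\p{\hat\pi^{(q+1)}(\ph)\st\calF^{(q+1)}}$ from the invariant) then yields the martingale property.

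It remains to read off the two endpoints and discharge optional stopping. Since the $X^{(n,0)}$ are i.i.d.\ of law $\pi$ with uniform weights, $\E\calM^{(0)} = \E\hat\pi^{(0)}(\ph) = \pi(\ph)$; and since $\set{Q_{\rm iter}=q}\in\calF^{(q)}$ and $\calM^{(q)} = \E\p{\hat\pi^{(q)}(\ph)\st\calF^{(q)}}$ for every $q$, summing over $q$ gives $\E\calM^{(Q_{\rm iter})} = \E\hat\pi^{(Q_{\rm iter})}(\ph)$. Optional stopping at the a.s.\ finite $Q_{\rm iter}$ is legitimate under either hypothesis: if $Q_{\rm iter}$ is deterministically bounded this is the elementary case, while under mass conservation~\eqref{eq:mass_cons} one has $\abs{\calM^{(q)}}\le \norm{\ph}_\infty \sum_{n\in I^{(q)}} G^{(n,q)} = \norm{\ph}_\infty$, so the martingale is uniformly bounded, hence uniformly integrable. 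Either way $\E\hat\pi^{(Q_{\rm iter})}(\ph) = \E\calM^{(Q_{\rm iter})} = \E\calM^{(0)} = \pi(\ph)$, which is the assertion.

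The main obstacle I anticipate is Lemma~\ref{lem:Doob} together with the inductive conditional-law invariant. The filtration is indexed by a \emph{continuous} level parameter, so the optional stopping step genuinely needs the right-continuity of $z\mapsto\pi_z(\ph)(X)$ from Assumption~\ref{ass:key_2}; and the preservation of the invariant across the resampling step is precisely where the consistency relation of Assumption~\ref{ass:key_1} (that $\pi_{z'}(X,\cdot)$ is a version of the law of $X$ conditional on $\filt_{z'}^X$) must be applied at the random stopping level $Z^{(q)}$ rather than at a fixed level.
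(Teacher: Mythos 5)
Your proposal is correct and follows essentially the same route as the paper: the quantity $\calM^{(q)}=\sum_{n\in I^{(q)}}G^{(n,q)}\pi_{Z^{(q)}}(\ph)(X^{(n,q)})$ is exactly the paper's martingale $M^{(q)}(\ph)=\E\p{\hat{\pi}^{(q)}(\ph)\st\calF^{(q)}}$ once the conditional-independence invariant (the paper's Proposition~\ref{pro:1}, proved via the propagation Lemma~\ref{lem:Doob} with right-continuity from Assumption~\ref{ass:key_2}) is in place, and the two-substep telescoping through $\hat{\pi}^{(q+1/2)}$ plus Doob's optional stopping at $Q_{\rm iter}$ under either hypothesis is precisely the paper's argument. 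The only cosmetic difference is that you write the martingale in closed form and compose the resampling and branching identities in the reverse order of exposition.
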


Notice that a deterministic number of iterations $Q_{\rm iter}=q_{0}
\in \N$ satisfy the assumptions\footnote{To obtain
  $Q_{\rm iter}=q_{0}$, one simply has to choose
  $S^{(q)}=\begin{cases} 0 \quad \text{if}~q<q_{0}\\ 1 \quad
    \text{if}~q\geq q_{0}\end{cases}$.} of Theorem 4.1 so that in the
above setting (namely under Assumptions~\ref{ass:key_2}-\ref{ass:key_1}-\ref{ass:algo}): $$ \forall q_0 \ge 0,\, \E\p{ \hat{\pi}^{(q_0)}(\ph) } = \pi(\ph).$$

% For instance, let $q_{\rm iter}\in \N$ and set $S^{(q)}=\begin{cases} 0 \quad \text{if}~q<q_{\rm iter}\\ 1 \quad \text{if}~q\geq q_{\rm iter}\end{cases}$. In such a case, the number of iterations is constant, $Q_{\rm iter}=q_{\rm iter}$. Thus from Theorem~\ref{th:unbiased}, we see that

As a corollary of Theorem~\ref{th:unbiased} and thanks to the
discussion in Section~\ref{sec:justif_Markov} which shows that the
AMS algorithm of Section~\ref{sec:AMS_Markov} enters into the GAMS framework, we also obtain that the
AMS estimator~$\hat{\ph}=\hat{\pi}^{(Q_{\rm iter})}(\ph)$ defined by~\eqref{eq:phi_hat} in Section~\ref{sec:AMS_estimator} is an
unbiased estimator of $\pi(\varphi)$.

%\comment{Un corollaire pour AMS classique? (PLutot NON. Si nombre d'it\'erations fix\'e) Let $q_{\rm iter}\in \N$ be a given deterministic number of iterations. Then it is easy to propose a stopping criterion rule which gives $Q_{\rm iter}=q_{\rm iter}$ almost surely. As a consequence of Theorem~\ref{th:unbiased}, we see that
%$\E\p{ \hat{\pi}^{(q_{\rm iter})}(\ph) } = \pi(\ph)$. (ENLEVER LA PHRASE PRECEDENTE ?)}

The strategy to prove this theorem is to introduce the
sequence of random variables
\begin{equation}\label{eq:def_martingale}
  \dps M^{(q)}(\ph) = \E\p{\hat{\pi}^{(q)}(\ph) \st \calF^{(q)}}
\end{equation}
for a fixed bounded measurable test function $\ph:\calP \to \R$
and to show that the process $\p{M^{(q)}(\ph) }_{q\in \N}$ indexed by
$q$ is a martingale with respect to the filtration
$\p{\calF^{(q)}}_{q\in \N}$. Since, by Proposition~\ref{propo_Qiter_stopping}, $Q_{\rm iter}$ is a
stopping time for this filtration, Doob's stopping theorem for
discrete-time martingales can then be applied to obtain
Theorem~\ref{th:unbiased}. The next two sections are devoted to the
proof of Theorem~\ref{th:unbiased}.

\subsection{Proof of Theorem~\ref{th:unbiased}}\label{sec:proof_th}

\comment{TONY: Le théorème n'utilise pas cette definition ni la
  proposition qui suit. On pourrait mettre ça dans la section
  suivante. Ceci dit, la proposition 4.3 permet de comprendre ce qui
  se passe. A discuter...}

The following definition of conditionally independent replicas will be
useful in the proof.
\begin{Def}\label{def:cond}
Let $Z$ be a random level, $I \subset \N^\ast$ a finite random set of indices and $\calG$ a $\sigma$-field of events. We
assume that $\sigma(I) \vee  \sigma(Z)\subset \calG$. We say that the
random system of replicas $(X^{(n)})_{n \in I}$ is independently
distributed with distribution $(\pi_{Z}(X^{(n)},\, . \,))_{ n \in I}$
conditionally on $\calG$, if for any sequence of bounded measurable
functions $(\ph_n)_{n\in I}$ from $\calP$ to $\R$, we have
\[
\E \p{ \prod_{n\in I} \ph_n(X^{(n)}) \st \calG } = \prod_{n\in I}  \pi_Z(\ph_n)(X^{(n)}). %  . %(= \int_{0}^1 \ph(F(X,Z,u)) du ) 
\]
\end{Def}

Let us now state two intermediate propositions before proving Theorem~\ref{th:unbiased}.
%
% The first step of the proof consists in proving
% Lemma~\ref{lem:Doob}. This lemma, of rather independent interest,
% enables to propagate conditional independence of
% replicas. Lemma~\ref{lem:Doob} and then enables, in a second step,
% to prove Proposition~\ref{pro:1} below.
%
The first proposition states that, in the sense of
Definition~\ref{def:cond}, the set of replicas with indices in
$I^{(q)}$ (resp. $I^{(q+1)}$) are $\calF^{(q)}$-conditionally
independent (resp. $\calF^{(q)} \vee \sigma(P^{(q+1)})$-conditionally independent) with explicit distributions.
\begin{Pro}\label{pro:1}
Let us consider the setting of Theorem~\ref{th:unbiased}.  For any integer $q \geq 0$,
  \begin{enumerate}
\item[$(i)_{q}$] The replicas $( X^{(n,q)} )_{n \in I^{(q)}}$ are independent with distribution $\p{\pi_{Z^{(q)} } (X^{(n,q)}, \, . \, )}_{n \in I^{(q)}}$ conditionally on $\calF^{(q)}$.
  \item[$(ii)_{q}$]  The replicas $(X^{(n',q+1)} )_{n' \in I^{(q+1)}}$ are independent conditionally on $\calF^{(q)} \vee \sigma(P^{(q+1)})$, with distribution $\p{\pi_{Z^{(q)} } (X^{(n',q+1)} , \, . \,  )}_{n' \in I^{(q+1)}}$.
\end{enumerate}
\end{Pro}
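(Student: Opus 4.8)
The plan is to prove both statements simultaneously by induction on $q \geq 0$, exploiting the recursive structure of the GAMS framework and the key consistency relation of Assumption~\ref{ass:key_1}. The two statements are naturally interleaved: I expect $(i)_q$ to feed into $(ii)_q$ within a single iteration, and $(ii)_q$ to feed into $(i)_{q+1}$ across iterations.

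For the base case, I would first establish $(i)_0$. At initialization, the replicas $(X^{(n,0)})_{n \in I^{(0)}}$ are i.i.d.\ with law $\pi = \pi_{-\infty}(x,\cdot)$, and $\calF^{(0)} = \filt_{Z^{(0)}}^{\calX^{(0)}}$ is the stopped $\sigma$-field at the stopping level $Z^{(0)}$. The claim that they are conditionally independent with laws $\pi_{Z^{(0)}}(X^{(n,0)},\cdot)$ given $\calF^{(0)}$ should follow from Assumption~\ref{ass:key_1} applied at each replica (which tells us $\E(\ph(X^{(n,0)}) \st \filt_{Z^{(0)}}^{X^{(n,0)}}) = \pi_{Z^{(0)}}(\ph)(X^{(n,0)})$), combined with the product structure of the filtration $\filt_z^{\rm rep} = \bigsqcup_I (\filt_z)^{\card I}$ which encodes the independence across distinct labels. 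Some care is needed because $Z^{(0)}$ is random: I would reduce to the stopping-level version of Assumption~\ref{ass:key_1} by partitioning on level values or by a direct optional-stopping-type argument, using that $Z^{(0)}$ is a stopping level and each factor is adapted.

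For the induction step, I would assume $(i)_q$ and derive $(ii)_q$, then derive $(i)_{q+1}$. To pass from $(i)_q$ to $(ii)_q$: the replicas indexed by $I^{(q)} \setminus I^{(q+1)}_{\rm killed}$ are unchanged ($X^{(n,q+1)} = X^{(n,q)}$), while each new replica $X^{(n',q+1)}$ for $n' \in I^{(q+1)}_{\rm new}$ is sampled via $\pi_{Z^{(q)}}(X^{(P^{(q+1)}(n'),q)}, dx')$ using fresh independent randomness conditionally on $\calF^{(q)} \vee \sigma(P^{(q+1)})$. The conditional independence given $\calF^{(q)} \vee \sigma(P^{(q+1)})$ should follow by combining the inductive conditional independence of the parents with the independence of the resampling draws; the fact that the resulting law of each $X^{(n',q+1)}$ is again $\pi_{Z^{(q)}}(X^{(n',q+1)},\cdot)$ is precisely the content of Assumption~\ref{ass:key_1} applied with $z = z' = Z^{(q)}$, which gives the fixed-point property $\E(\ph(X) \st \filt_{Z^{(q)}}^X) = \pi_{Z^{(q)}}(\ph)(X)$ when $X \sim \pi_{Z^{(q)}}(x,\cdot)$. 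To then pass from $(ii)_q$ to $(i)_{q+1}$, I would use that $\calF^{(q+1)} = \calF^{(q)} \vee \sigma(P^{(q+1)}) \vee \filt_{Z^{(q+1)}}^{\calX^{(q+1)}}$ and that $Z^{(q+1)} \geq Z^{(q)}$; applying Assumption~\ref{ass:key_1} again with $z = Z^{(q)}$ and $z' = Z^{(q+1)}$ upgrades each conditional law from level $Z^{(q)}$ to level $Z^{(q+1)}$, yielding $\pi_{Z^{(q+1)}}(X^{(n,q+1)},\cdot)$.

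The main obstacle I anticipate is the careful handling of the \emph{random} stopping levels $Z^{(q)}$ and $Z^{(q+1)}$ inside the conditioning: Assumption~\ref{ass:key_1} is stated for deterministic $z, z'$, so I must justify its use at stopping levels via the stopped-$\sigma$-field machinery of Definition~\ref{def:stopping_level}, checking that the events $\{Z^{(q)} \leq z\}$ interact correctly with the product filtration on $\calP^{\rm rep}$. The product/disjoint-union structure of $\calF^{(q+1)}_z$, which mixes the $\sigma$-field $\sigma(P^{(q+1)})$ (determining the label set and parentage) with the level-indexed filtration, is where the bookkeeping is delicate, and I would isolate this as a lemma reducing the stopping-level statement to the deterministic-level statement of Assumption~\ref{ass:key_1}.
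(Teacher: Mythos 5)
Your induction structure is exactly the paper's: base case $(i)_0$, then $(i)_q \Rightarrow (ii)_q$ via the resampling identity $\pi_{Z^{(q)}}(X^{(n',q+1)},\cdot)=\pi_{Z^{(q)}}(X^{(P^{(q+1)}(n'),q)},\cdot)$ from Assumption~\ref{ass:key_1}, then $(ii)_q \Rightarrow (i)_{q+1}$ by upgrading the conditioning level from $Z^{(q)}$ to $Z^{(q+1)}$. You also correctly identify that the whole difficulty sits in one place — passing Assumption~\ref{ass:key_1} from deterministic levels to random stopping levels — and that this should be isolated as a lemma. That lemma is precisely the paper's propagation Lemma~\ref{lem:Doob}, which is applied three times (for $(i)_0$ with $Z=-\infty$, $Z'=Z^{(0)}$; and for $(ii)_q\Rightarrow(i)_{q+1}$ with $Z=Z^{(q)}$, $Z'=Z^{(q+1)}$, $\calG=\calF^{(q)}\vee\sigma(P^{(q+1)})$).

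The gap is in how you propose to prove that lemma. Your first suggested route, ``partitioning on level values,'' does not work in general: the stopping levels $Z^{(q)}$ are order statistics of maximum levels of resampled paths and are typically continuously distributed, so there is no countable partition $\{Z^{(q)}=z\}$ to sum over, and approximating $Z^{(q)}$ from above by discrete levels already requires a continuity-in-$z$ property of $z\mapsto\pi_z(\ph)(x)$ to pass to the limit. Your second route, a ``direct optional-stopping-type argument,'' is the right one, but as stated it is incomplete: Doob's optional stopping theorem for continuum-indexed martingales requires right-continuous paths, and nothing in your proposal supplies this. This is exactly where Assumption~\ref{ass:key_2} (right-continuity of $z\mapsto\pi_z(\ph)(x)$, never invoked in your proposal) enters. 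The paper's Lemma~\ref{lem:Doob} first shows, for deterministic $z$ and via a monotone class argument plus the tower-type identity $\pi_{z'}(\ph_n\psi_n)=\pi_{z'}(\psi_n\,\pi_{z'\vee z}(\ph_n))$ from Assumption~\ref{ass:key_1}, that
$\E\p{\prod_{n\in I}\ph_n(X^{(n)})\st\calG\vee\filt_z^{\calX}}=\prod_{n\in I}\pi_{Z\vee z}(\ph_n)(X^{(n)})$;
it then reparametrizes the level axis by a continuous increasing bijection ${\mathcal Z}:[0,1]\to\R\cup\{\pm\infty\}$ so that $N_t=\prod_{n\in I}\pi_{Z\vee{\mathcal Z}(t)}(\ph_n)(X^{(n)})$ is a \emph{bounded right-continuous} martingale (right-continuity coming from Assumption~\ref{ass:key_2} and continuity of the $\ph_n$), and only then applies optional stopping at $T'={\mathcal Z}^{-1}(Z')$, extending from continuous to measurable $\ph_n$ at the very end. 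Without this right-continuity step your argument cannot be closed, so you should make Assumption~\ref{ass:key_2} an explicit ingredient of your isolated lemma and drop the partitioning alternative.
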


The second proposition states intermediate equalities between
conditional averages of the empirical distributions, required to
obtain the desired martingale property of $\p{M^{(q)}(\ph)}_{q \geq
  0}$, and which are easily obtained from Proposition~\ref{pro:1}.

% The third step in the proof of our main result is given by Proposition~\ref{pro:2} below, which is a consequence of Proposition~\ref{pro:1}. It can be interpreted as intermediate equalities between conditional averages of the empirical distributions, required to obtain the desired martingale property of $\p{M^{(q)}(\ph)}_{q \geq 0}$.
%\comment{ENELEVER LA pHRASE SUIVANTE: We state two equalities on some conditional expectations. The first one involves the empirical distribution $  \dps \hat{\pi}^{(q+1/2)}(\ph)$ after the splitting step, while the second involves the empirical distribution $\dps \hat{\pi}^{(q+1)}(\ph)$ after the resampling step. Pas clair les "after". A reformuler.}
\begin{Pro}\label{pro:2}
Let us consider the setting of Theorem~\ref{th:unbiased}.  For any integer $q \geq 0$  and for any bounded measurable test function $\ph:\calP \to \R$,
\begin{enumerate}
  \item[$(iii)_{q}$] $
\E\p{ \hat{\pi}^{(q+1/2)}(\ph) 
%\sum_{n' \in I^{(q+1)} }  G^{(n',q+1)} \ph(X^{(P^{(q+1)}(n'),q)} ) 
\st \calF^{(q)}} = \E\p{  %\sum_{n \in I^{(q)} } G^{(n,q)} \ph(X^{(n,q)} ) 
\hat{\pi}^{(q)}(\ph)  \st \calF^{(q)}}
$.
\item[$(iv)_{q}$]
$
\E\p{  \hat{\pi}^{(q+1)}(\ph) %\sum_{n' \in I^{(q+1)} } G^{(n',q+1)} \ph(X^{(n',q+1)} ) 
\st \calF^{(q)} \vee \sigma(P^{(q+1)})} = \E\p{  \hat{\pi}^{(q+1/2)}(\ph) %\sum_{n' \in I^{(q+1)}}   G^{(n',q+1)} \ph(X^{(P^{(q+1)}(n'),q)} ) 
\st \calF^{(q)}\vee \sigma(P^{(q+1)})}
$.
  \end{enumerate}
\end{Pro}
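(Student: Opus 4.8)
The plan is to prove both identities by computing each conditional expectation term by term, exploiting that the weights and the sets of labels are measurable with respect to the conditioning $\sigma$-field, while the replicas themselves are not --- only their ``past'' up to the current level is. The two new inputs are the weight update formula~\eqref{weights} and Proposition~\ref{pro:1}; everything else is bookkeeping on which randomness is frozen by the conditioning.

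For $(iii)_q$, I would first rewrite $\hat\pi^{(q+1/2)}(\ph)$ by grouping the sum over $I^{(q+1)}$ according to parents. Since $P^{(q+1)}(n')=n$ forces $\ph(X^{(P^{(q+1)}(n'),q)})=\ph(X^{(n,q)})$ and, by~\eqref{weights}, $G^{(n',q+1)}=G^{(n,q)}/\E\p{B^{(n,q+1)}\st\pcalF{q}}$ for every such $n'$, and since exactly $B^{(n,q+1)}$ labels $n'\in I^{(q+1)}$ (the retained parent together with its $B^{(n,q+1)}-1$ children) map to each surviving $n$, one gets
\[
\hat\pi^{(q+1/2)}(\ph)=\sum_{n\in I^{(q)}}\frac{B^{(n,q+1)}}{\E\p{B^{(n,q+1)}\st\pcalF{q}}}\,G^{(n,q)}\,\ph(X^{(n,q)}),
\]
where killed labels may be included since they carry $B^{(n,q+1)}=0$. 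Taking $\E\p{\cdot\st\pcalF{q}}$, the $\pcalF{q}$-measurable factors $G^{(n,q)}/\E\p{B^{(n,q+1)}\st\pcalF{q}}$ come out, and the crux is the conditional independence
\[
\E\p{B^{(n,q+1)}\,\ph(X^{(n,q)})\st\pcalF{q}}=\E\p{B^{(n,q+1)}\st\pcalF{q}}\;\E\p{\ph(X^{(n,q)})\st\pcalF{q}}.
\]
This holds because, by the conditional-sampling convention of Section~\ref{sec:notation}, the branching numbers are produced from fresh randomness independent of the replicas introduced at previous steps, so given $\pcalF{q}$ they are independent of the (not $\pcalF{q}$-measurable) continuations of the $X^{(n,q)}$. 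The factor $B^{(n,q+1)}/\E\p{B^{(n,q+1)}\st\pcalF{q}}$ then averages to $1$, leaving $\sum_{n\in I^{(q)}}G^{(n,q)}\,\E\p{\ph(X^{(n,q)})\st\pcalF{q}}=\E\p{\hat\pi^{(q)}(\ph)\st\pcalF{q}}$, which is $(iii)_q$.

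For $(iv)_q$ I would condition on $\calG:=\pcalF{q}\vee\sigma(P^{(q+1)})$, with respect to which the labels $I^{(q+1)}$ and the weights $G^{(n',q+1)}$ are measurable, so both estimators reduce to weighted sums of conditional expectations of $\ph$ evaluated at the post-resampling replicas and at the parent replicas respectively. On the resampled side, Proposition~\ref{pro:1}~$(ii)_q$ (with all but one test function equal to $1$) gives $\E\p{\ph(X^{(n',q+1)})\st\calG}=\pi_{Z^{(q)}}(\ph)(X^{(n',q+1)})$, and since a resampled replica coincides with its parent up to level $Z^{(q)}$ while $\pi_{Z^{(q)}}(x,\cdot)$ depends on $x$ only through that part, this equals $\pi_{Z^{(q)}}(\ph)(X^{(P^{(q+1)}(n'),q)})$. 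On the intermediate side the same quantity appears: conditioning the parent's continuation on $\calG$ and using that $\sigma(P^{(q+1)})$ is carried by fresh branching randomness independent of the replicas, Proposition~\ref{pro:1}~$(i)_q$ yields $\E\p{\ph(X^{(P^{(q+1)}(n'),q)})\st\calG}=\pi_{Z^{(q)}}(\ph)(X^{(P^{(q+1)}(n'),q)})$. The two weighted sums therefore coincide, giving $(iv)_q$.

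I expect the only delicate point to be the measurability bookkeeping rather than any computation. The replicas $X^{(n,q)}$ are not $\pcalF{q}$-measurable --- only their restriction to $\filt_{Z^{(q)}}$ is --- so one must be careful to (a) justify that the branching and parent-assignment randomness is independent of the replicas' continuations beyond the current level (this is exactly what ``sampled conditionally on $\pcalF{q}$'' encodes and what underlies Proposition~\ref{pro:1}), and (b) keep track of the fact that $\pi_{Z^{(q)}}(\ph)(X^{(n,q)})$ is itself $\pcalF{q}$-measurable, which is what makes the replacement of $\ph(X^{(n,q)})$ by its conditional average harmless in the grouped sums.
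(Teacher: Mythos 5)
Your proposal is correct and follows essentially the same route as the paper: $(iii)_q$ is proved by regrouping the sum over $I^{(q+1)}$ by parents to produce the factor $B^{(n,q+1)}/\E\p{B^{(n,q+1)}\st\calF^{(q)}}$ and invoking the conditional independence of the branching numbers from the replicas given $\calF^{(q)}$, while $(iv)_q$ combines $(i)_q$ and $(ii)_q$ of Proposition~\ref{pro:1} with the identity $\pi_{Z^{(q)}}(X^{(n',q+1)},\cdot)=\pi_{Z^{(q)}}(X^{(P^{(q+1)}(n'),q)},\cdot)$, exactly as in the paper's argument.
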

The proofs of both  Proposition~\ref{pro:1} and Proposition~\ref{pro:2} are postponed to Section~\ref{sec:proofs}.
%
%
%
% The fourth step consists in proving Theorem~\ref{th:unbiased} directly from Proposition~\ref{pro:2}. 
% We now detail these four steps, in reversed order.
We are now in position to prove Theorem~\ref{th:unbiased}.
\begin{proof}[Proof of Theorem~\ref{th:unbiased}]
The proof consists in first proving that $\p{M^{(q)}(\ph)}_{q\geq 0}$
defined by \eqref{eq:def_martingale} is a $\p{\calF^{(q)}}_{q\geq
  0}$-martingale and then applying the Doob's optional stopping theorem.

% using Proposition~\ref{pro:2}.
%The conclusion will follow applying the Doob's optional stopping theorem.
%
Notice that $\E\p{M^{(q+1)}(\ph)  \st
  \calF^{(q)}}=\E(\hat{\pi}^{(q+1)}(\ph) \st \calF^{(q)})$ and let us compute the right-hand side. First, from point $(iv)_q$ of Proposition~\ref{pro:2} and since $\calF^{(q)}\subset \calF^{(q)}\vee \sigma(P^{(q+1)})$, we get
\begin{align*}
\E\p{ \hat{\pi}^{(q+1)}(\ph) \st \calF^{(q)}}& =\E\p{\E\p{ \hat{\pi}^{(q+1)}(\ph)  \st \calF^{(q)}\vee \sigma(P^{(q+1)})} \st \calF^{(q)}}\\
%&=\E\p{\E\Bigl(\sum_{n \in I^{(q)}} G^{(n,q)} \ph(X^{(n,q)})\st \calF^{(q+1)}\Bigr) \st \calF^{(q)}_{Z^{(q)}}}\\
%&=\E\p{\E\p{ \hat{\pi}^{(q+1/2)}(\ph)   \st \calF^{(q+1)}}  \st \calF^{(q)}}\\
&=\E\p{   \hat{\pi}^{(q+1/2)}(\ph)  \st \calF^{(q)}}.
\end{align*}
Second, %since $\calF^{(q)}_{Z^{(q)}}\subset\calF_{Z^{(q)}}^{(q)}$, 
from point $(iii)_q$ of Proposition~\ref{pro:2} we have
\begin{align*}
\E\p{  \hat{\pi}^{(q+1/2)}(\ph)  \st \calF^{(q)}} %&=\E\p{\E\p{  \hat{\pi}^{(q+1/2)}(\ph)  \st \calF_{Z^{(q)}}^{(q)} } \st \calF^{(q)}}\\
%&=\E\p{\E\Bigl(  \sum_{n \in I^{(q)}} G^{(n,q)} \ph(X^{(n,q)}) \st \calF_{Z^{(q)}}^{(q)} \Bigr) \st \calF^{(q)}}\\
%&=\E\p{ \sum_{n \in I^{(q)}} G^{(n,q)} \ph(X^{(n,q)})\st \calF^{(q)}}\\
%&
=\E\p{ \hat{\pi}^{(q)}(\ph) \st \calF^{(q)}}.
\end{align*}
We thus have for any $q\geq 0$, 
\begin{equation}\label{martingale}
\E\p{ \hat{\pi}^{(q+1)}(\ph) \st \calF^{(q)}}=\E\p{ \hat{\pi}^{(q)}(\ph) \st \calF^{(q)}}
\end{equation}
and $\p{M^{(q)}(\ph) }_{q\in \N}$ is therefore a $\p{\calF^{(q)}}_{q\in \N}$-martingale.

%Notice that $\E M^{(0)}=\pi(\ph)$.
We now focus on stopping the latter martingale at the random index
$Q_{\rm iter}$. By assumption, either the almost 
sure mass conservation property~\eqref{eq:mass_cons} is satisfied, in
which case $\p{M^{(q)}(\ph)}_{q\in
  \N}$ is a bounded martingale (since $\abs{M^{(q)}(\ph)} \leq \norm{ \ph
}_{\infty} $), or $Q_{\rm iter}\leq q_{\rm max} $ for some
deterministic real number $q_{\rm max} \in \R$. In both cases, we apply the Doob's optional stopping Theorem (see for instance~\cite{Shiryayev1984}, Chapter $7$, Section $2$, Theorem $1$ and Corollaries $1$ and $2$) to the martingale $\p{M^{(q)}(\ph)}_{q\in \N}$ and with the stopping time $Q_{\rm iter}$ with respect to the filtration $\p{\calF^{(q)}}_{q\in \N}$. We obtain
\[
\E\p{ \hat{\pi}^{(Q_{\rm iter})}(\ph) } = \E \left(M^{(0)}(\ph)\right)= \pi(\ph)
\]
which concludes the proof of Theorem~\ref{th:unbiased}.
\end{proof}

\subsection{Proofs of Propositions~\ref{pro:1} and~\ref{pro:2}}\label{sec:proofs}

Proposition~\ref{pro:1} requires an additional intermediate result,
namely the propagation Lemma~\ref{lem:Doob} below. This lemma
gives rigorous conditions under which the property on a system of
replicas $(X^{(n)})_{n \in I}$  of being
independently distributed with  distribution $(\pi_{Z}(X^{(n)}, \,
. \,))_{n\in I }$ conditionally on $\calF$ can be transported from the $\sigma$-field
$\calF$ to a larger $\sigma$-field.  It is based on Doob's optional
stopping theorem for martingales indexed by the level variable
$z$. Notice that it is the only result where the right
continuity property of Assumption~\ref{ass:key_2} is explicitly used.

\begin{Lem}\label{lem:Doob}
%\comment{Je simplifie le lemme en enlevant la variable indépendante $U$, qui n' apporte rien}
Let us assume that Assumptions~\ref{ass:key_2} and~\ref{ass:key_1} hold. Let $Z \in \R \cup\set{-\infty,+\infty}$ be a random level, $\calG$ a $\sigma$-field, and $I \subset \N^\ast$ a finite random set of labels. Assume that $\sigma(I)\vee\sigma(Z) \subset \calG$. Consider a random system of replicas $\calX= (X^{(n)})_{n \in I}$, which is independently distributed with distribution $(\pi_{Z}(X^{(n)}, \, . \,))_{n\in I }$ conditionally on $\calG$ (in the sense of Definition~\ref{def:cond}).
%
%Let $U$ be some random variable such that for any $z\in \R$ the $\sigma$-fields $\sigma(U)$ and $\calG\vee \filt_{z}^{\calX}$ are independent.
%
Set
\begin{equation}\label{filt:calGz}
\forall z \in \R, \, \calG_z \eqdef \calG \vee \filt_z^{\calX},
\end{equation}
and assume that $Z' \in  \R \cup\set{-\infty,+\infty}$ is a stopping
level for the filtration $\p{\calG_z}_{z\in \R}$ such that, almost surely, $Z' \geq  Z$.
%where $U$ is some independent random variable of $\calG \vee \filt_z^{\calX}$.

Then the replicas $(X^{(n)})_{n \in I}$ are independently distributed conditionally on  $\calG_{Z'}$, with distribution $\p{\pi_{Z'}(X^{(n)}, \, . \,)}_{n\in I} $.
 % for any sequence of bounded Borel test functions $\ph_n:\calP\rightarrow\R$, with $n\in \N^{*}$
% \[
% \E \p{ \prod_{n \in I} \ph_n(X^{(n)}) \st \calG_{Z'} } = \prod_{n \in I}  \pi_{Z'}( \ph_{n})(X^{(n)})  
% \]
\end{Lem}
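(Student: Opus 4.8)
The plan is to fix the test functions, build a bounded martingale indexed by the level variable $z$, and stop it at $Z'$ via Doob's optional stopping theorem; Assumption~\ref{ass:key_2} will enter precisely to supply the right-continuity needed for this continuous-parameter stopping argument. First I would reduce to continuous bounded test functions: for a fixed event $A \in \calG_{Z'}$ the two sides of the desired identity are, after testing against $\one_A$, multilinear in the family $(\ph_n)_{n \in I}$ and stable under bounded pointwise limits (because $\pi_{Z'}(x,\cdot)$ and $\Law(X^{(n)})$ are genuine probability measures, so dominated convergence applies), hence a functional monotone class argument shows it suffices to prove
\[
\E\Bigl(\prod_{n \in I} \ph_n(X^{(n)}) \st \calG_{Z'}\Bigr) = \prod_{n \in I} \pi_{Z'}(\ph_n)(X^{(n)})
\]
when every $\ph_n$ is continuous and bounded. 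For such $(\ph_n)$ I would then introduce the process
\[
M_z = \prod_{n \in I} \pi_{z \vee Z}(\ph_n)(X^{(n)}), \qquad z \in \R,
\]
which is bounded by $\prod_n \norm{\ph_n}_\infty$ and satisfies $M_{+\infty} = \prod_n \ph_n(X^{(n)})$ by the convention $\pi_{+\infty}(x,\cdot) = \delta_x$.

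Next I would show that $(M_z)$ is a $\p{\calG_z}_{z}$-martingale. The first structural ingredient is the consistency relation $\pi_z(\pi_{z'}(\ph))(x) = \pi_z(\ph)(x)$ for $z \le z'$, obtained by taking full expectation in the identity of Assumption~\ref{ass:key_1} under $X \sim \pi_z(x,\cdot)$. The second ingredient is that the conditional independence of $(X^{(n)})_{n \in I}$ given $\calG$ propagates to conditional independence given $\calG_z = \calG \vee \bigvee_{n} \filt_z^{X^{(n)}}$: conditioning jointly independent variables further on their own marginal information $\filt_z^{X^{(n)}}$ preserves conditional independence, and Assumption~\ref{ass:key_1} identifies the disintegrated conditional law of $X^{(n)}$ as $\pi_z(X^{(n)},\cdot)$ whenever $z \ge Z$. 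Combining the two, for $z' \ge z$ I would apply the conditional independence at level $z \vee Z$ to the functions $\psi_n = \pi_{z' \vee Z}(\ph_n)$ and collapse $\pi_{z \vee Z}(\pi_{z' \vee Z}(\ph_n)) = \pi_{z \vee Z}(\ph_n)$ by consistency, giving $\E\p{M_{z'} \st \calG_z} = M_z$; on the range $z < Z$ one has $\calG_z = \calG$ and $M_z$ equals the constant $\E\p{\prod_n \ph_n(X^{(n)}) \st \calG}$, so the martingale identity holds across all levels.

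Finally I would invoke right-continuity and stop. Since each $\ph_n$ is continuous and bounded, Assumption~\ref{ass:key_2} makes $z \mapsto \pi_z(\ph_n)(X^{(n)})$ right-continuous, hence the finite product $z \mapsto M_z$ is a right-continuous bounded martingale closed by $M_{+\infty}$. Because $Z'$ is a $\p{\calG_z}_z$-stopping level (Definition~\ref{def:stopping_level}) with $Z' \ge Z$, the optional sampling theorem applied between the stopping levels $Z'$ and $+\infty$ yields $\E\p{M_{+\infty} \st \calG_{Z'}} = M_{Z'}$, which is exactly the displayed identity; the monotone class reduction then upgrades it to all bounded measurable $\ph_n$, completing the proof. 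To stay within discrete-time optional stopping I would realize the continuum stopping by approximating $Z'$ from above by discrete-valued stopping levels $Z'_m \downarrow Z'$, applying the discrete-time theorem on each dyadic grid, and passing to the limit using the right-continuity of $z \mapsto M_z$ together with reverse-martingale convergence along $\calG_{Z'_m}$.

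The main obstacle I anticipate is twofold and concentrated in the last step: rigorously running optional stopping for a martingale indexed by a continuum of levels, where the right-continuity furnished by Assumption~\ref{ass:key_2} is indispensable and must be married to a careful discretization of the stopping level $Z'$ (and to the behavior of $\calG_{Z'_m}$ as $Z'_m \downarrow Z'$). A secondary but genuine difficulty is the correct bookkeeping of the random base level $Z$ in defining and adapting $M_z$ — in particular the structural measurability $\filt_Z^{X^{(n)}} \subset \calG$ implicit in the hypothesis, which is what makes $M_z$ both $\calG_z$-measurable and constant below $Z$.
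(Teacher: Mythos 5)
Your proposal is correct and follows essentially the same route as the paper: the same level-indexed bounded martingale $z \mapsto \prod_{n\in I}\pi_{z\vee Z}(\ph_n)(X^{(n)})$ (established, as in the paper's Step~1, from the conditional independence given $\calG$ together with Assumption~\ref{ass:key_1}), right-continuity from Assumption~\ref{ass:key_2}, optional stopping at $Z'$, and a final monotone-class/separating-class upgrade from continuous to bounded measurable test functions. The only cosmetic difference is in formalizing optional stopping for the continuum-indexed martingale: the paper reparametrizes the level axis to $[0,1]$ and invokes the standard theorem for right-continuous bounded martingales, whereas you discretize $Z'$ from above, which amounts to reproving that same theorem.
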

\begin{proof}[Proof of Lemma~\ref{lem:Doob}]
\underline{Step 1}. % By assumption, the system of replicas $(X^{(n)})_{n \in I}$ is independently distributed with distribution $\p{\pi_{Z}( X^{(n)} , \, . \, ), n\in I}$ conditionally on $\calG$.
The first step consists in proving that for any fixed $z\in \R$, the system of replicas is independently distributed with distribution $\p{\pi_{Z \vee z}( X^{(n)} , \, . \, )}_{n\in I}$ conditionally on $\calG \vee \filt_{z}^{\calX}$. By a standard monotone class argument, it is sufficient to show that
\[
\E \p{\prod_{n \in I} \ph_n(X^{(n)}) \psi_n(X^{(n)}) \, Y} = \E \p{\prod_{n \in I} \pi_{Z \vee z}(\ph_n)(X^{(n)}) \psi_n(X^{(n)}) \, Y},
\]
where $(\ph_n)_{n \geq1}$ ranges over bounded measurable test
functions from $\calP$ to $\R$, $(\psi_n)_{n \geq1}$ ranges over
$\filt_z$-measurable test functions from $\calP$ to $\R$, and $Y$ over bounded
$\calG$-measurable random variables. Let us denote $\mathcal{I}=\E
\p{\prod_{n \in I} \ph_n(X^{(n)}) \psi_n(X^{(n)}) \, Y}$ the left-hand side.
Since $Y$ is $\calG$-measurable, by Definition~\ref{def:cond} of the conditional independence we get that
\[
  \mathcal{I}=\E \p{\prod_{n\in I} \pi_{Z}( \ph_n\psi_n)(X^{(n)} )   Y }.
\]
The functions $(\psi_n)_{n \geq 1}$ being $\filt_z$-measurable, they are {\it a fortiori} $\filt_{z \vee z'}$-measurable for any $z' \in \R$. Assumption~\ref{ass:key_1} on the resampling family $\p{\pi_z}_{z\in\R}$ then yields
%\[
%\pi_{z}( \ph_n \psi_n)(x) = \pi_{z} \p{ \pi_{z'}( \ph_n) \psi_n }\p{x} \quad  \forall z' \geq z;
%\]
%this yields
\begin{align*}
\pi_{z'}( \ph_n \psi_n)(x) =\pi_{z'}(\pi_{z' \vee z}(\psi_n  \ph_n)) ( x)  = \pi_{z'}(\psi_n \pi_{z' \vee z}( \ph_n)) (x) .
\end{align*}
As a consequence, using again that the system of replicas
$(X^{(n)})_{n \in I}$ is independently distributed with distribution
$\p{\pi_{Z}( X^{(n)} , \, . \, )}_{n\in I}$ conditionally on $\calG$
and that $Y$ is $\calG$-measurable, we get the following identity
\begin{align*}
\mathcal{I}= \E \left( \prod_{n \in I}\pi_{Z}\left(\psi_n \pi_{Z \vee z}( \ph_n)\right) ( X^{(n)}) Y\right) =\E \p{ \prod_{n \in I} \pi_{Z \vee z }( \ph_n)(X^{(n)} )  \psi_n(X^{(n)}) Y },
\end{align*}
and this concludes the first step.

%By the characterization of the conditional expectation, we obtain
%$$\E \p{\prod_{j=1}^N\ph_j(X^{(j)}) | \calG \vee \filt^{\calX}_z}= \prod_{j=1}^N \pi_{Z \vee z }( \ph_j)(X^{(j)}),$$%
%which is the desired property.
%Therefore by properties of the conditional expectation and gathering the results, we get the identity 
%\begin{align*}
%\mathcal{I} &= \E\p{\E \p{\prod_{j=1}^N\ph_j(X^{(j)}) | \calG \vee \filt^{\calX}_z}\prod_{k=1}^N  \psi_k(X^{(k)} ) Y }\\
%&=\E \p{ \prod_{j=1}^N \pi_{Z \vee z }( \ph_j)(X^{(j)} )\prod_{k=1}^N  \psi_k(X^{(k)} ) Y }.
%\end{align*}
%We conclude by uniqueness of the conditional expectation.
%%We conclude using a monotone class argument \comment{détailler ?}.
% 
%%We now turn into the proof of the claim of the Lemma.
%%
%%Let $\p{\ph_n}_{n\in\N^*}$ be a sequence of continuous bounded test functions $\calP\rightarrow \R$. 
%%Denote
%%\[
%%M_1 = \ph_1(X^{(1)}) \ldots \ph_N(X^{(N)}),
%%\]

%\vspace{0.5\baselineskip}

\medskip

\noindent\underline{Step 2.} We now prove the main claim of this
lemma, namely the fact that
the replicas $(X^{(n)})_{n \in I}$ are independent with distribution
$\p{\pi_{Z'}(X^{(n)}, \, . \,)}_{n\in I}$ conditionally on
$\calG_{Z'}$. Let us first assume that the test functions
$\p{\ph_n}_{n\in I}$ are continuous from $\calP$ to $\R$.

In order to come back to a classical setting to apply Doob's optional
stopping theorem, let us introduce a  continuous, one-to-one and strictly increasing
change of level parametrization ${\mathcal Z}:[0,1] \to \R \cup
\set{-\infty,+\infty}$. Let us consider the following stochastic process indexed by $t \in [0,1]$:
\[
N_t = \E \p{ \prod_{n \in I} \ph_n(X^{(n)})  \,\Big|\, \calG_{{\mathcal Z}(t)}}.
\]
It is a bounded (since $I$ is $\calG_{{\mathcal Z}(t)}$-measurable for all $t$) and thus uniformly integrable martingale with respect to the filtration $(\calG_{{\mathcal Z}(t)})_{t
  \in [0,1]}$. In addition, $N_1=\E\left( \prod_{n \in I}
\ph_n(X^{(n)}) \st \calG_{+\infty}\right)$ where $\calG_{+\infty}=\calG
\vee \filt^{\chi}_{+\infty}$.\comment{Tony: OK pour tout le monde ?}
%, where is a
%Recall that the $\sigma$-fields $\sigma(U)$ and $\calG \vee \filt^{\calX}_z$ are independent for any $z\in \R$.
Thanks to Step~$1$ above, we get: almost surely, for all $t \in [0,1]$,
\[
N_{t} = \prod_{n\in I} \pi_{Z \vee {\mathcal Z}(t) }( \ph_n)(X^{(n)} ).
\]
Therefore, $N_t$ is almost surely a right-continuous bounded martingale from Assumption~\ref{ass:key_2} on $(\pi_z)_{z \in \R}$.
%\comment{Faire aussi avec l'autre hypoth\`ese de continuit\'e \`a droite.} 
By assumption, $T' = {\mathcal Z}^{-1}(Z')$ is a $\p{\calG_{{\mathcal Z}(t)}}_{t \in
  [0,1]}$-stopping level, and we can use a Doob's optional stopping
argument for right continuous bounded martingales (see for instance~\cite[Theorem 3.22]{KaratzasShreve91}) to get
\[
\E\p{N_{1} \st \calG_{{\mathcal Z}(T')}}=N_{T'}
\]
which can be rewritten as (since $Z' \ge Z \as$)
\[
\E\p{\prod_{n\in I} \ph_n(X^{(n)} ) \st \calG_{Z'} } = \prod_{n\in I} \pi_{Z'}(\ph_n)(X^{(n)}).
\]
%The claim is proved when the functions $\p{\ph_n}_{n\in I}$ are continuous (required by the right-continuity). 
This equality actually holds for any sequence of bounded measurable functions $\p{\ph_n}_{n\in I}$ since continuous bounded functions are separating. This concludes the proof of Lemma~\ref{lem:Doob}.

% \comment{Pas besoin de minoration uniforme sur $Z'\ge Z$?}
% Moreover, since the random variable $\prod_{j=1}^N\ph_j(X^{(j)}) $ is bounded \comment{since $\ph$ is bounded and $N$ is fixed}, the martingale almost sure convergence property holds
% \[M_{z}\underset{z\rightarrow +\infty}\rightarrow \E\p{\prod_{j=1}^N\ph_j(X^{(j)})\st \calG_{\infty} },\]
% where $\calG_{\infty}$ is the smallest $\sigma$-field which contains $\calG_z$ for any $z\in\R$.
% Therefore, letting $z\rightarrow +\infty$ and using $Z'\geq Z$, we conclude by the dominated convergence theorem:
% \[
%  \prod_{j=1}^N\pi_{Z'}( \ph_j)(X^{(j)} )=M_{Z'}=\E\p{\prod_{j=1}^N\ph_j(X^{(j)}) \st \calG_{Z'}}.
% \]

\end{proof}

Thanks to Lemma~\ref{lem:Doob}, we can now prove Proposition~\ref{pro:1}.

\begin{proof}[Proof of Proposition~\ref{pro:1}]
We proceed by induction on the iteration index $q\geq 0$. We first prove directly the statement $(i)_{q} \Rightarrow (ii)_{q}$
and then $(ii)_{q} \Rightarrow (i)_{q+1}$ using
Lemma~\ref{lem:Doob}. The initialization step consists in proving
$(i)_{0}$ using Lemma~\ref{lem:Doob}. In this proof, $(\ph_n)_{n
  \geq1}$ denotes a sequence of bounded measurable test
functions from $\calP$ to $\R$.

\medskip
%\vspace{0.5\baselineskip}

\noindent\emph{Proof of $(i)_0$.}
The statement $(i)_0$ reads
\[
     \E \p{ \prod_{n \in I^{(0)}} \ph_n\p{X^{(n,0)}}  \st \calF^{(0)}} =  \prod_{n \in I^{(0)}}  \pi_{Z^{(0)}} (\ph_n)(X^{(n,0)} ) , 
  \]
where $\calF^{(0)}=\filt^{\calX^{(0)}}_{Z^{(0)}}$. This is exactly the result of Lemma~\ref{lem:Doob}, taking $Z=-\infty$, $Z'=Z^{(0)}$, $\calG =\sigma(I^{(0)})$, and recalling that the replicas are initially independent and distributed according to $\pi$.

\medskip
%\vspace{0.5\baselineskip}

\noindent\emph{Proof of $(i)_{q} \Rightarrow (ii)_{q}$.}
Assume that $(i)_{q}$ holds. We rewrite property $(ii)_q$ as follows
\begin{equation}
  \label{eq:indepq}
  \E \p{ \prod_{n' \in I^{(q+1)}}  \ph_{n'} \p{X^{(n',q+1)}} \st \calF^{(q)}  \vee \sigma(P^{(q+1)}) } = \prod_{n' \in I^{(q+1)}}   \pi_{Z^{(q)} } (\ph_{n'})( X^{(n',q+1)}),
\end{equation}
and we now prove this identity.

%The proof works as follows:   Finally $  \pi_{Z^{(q)}}(X^{(P^{(q+1}(n'),q)}, \,  . \,) =  \pi_{Z^{(q)}}(X^{(n',q+1)}, \,  . \,) \quad \forall n' \in  I^{(q+1)}$ by definition of resampling. 

%\comment{Il faut mettre des refs pr\'ecises partout, des liens. Pour chaque identit\'e/conditionnemment, qu'est-ce qui a chang\'e? Sinon c'est juste abstrait.}

Let us recall that in the resampling step, the replicas with labels in $I^{(q)}$ are not
  resampled and the replicas $(X^{(n',q+1)})_{n' \in I_{\rm
      new}^{(q+1)}}$ are sampled in such a way that they are
  independently distributed with distribution $(
  \pi_{Z^{(q)}}(X^{(P^{(q+1)}(n'),q)}, \,  . \,) )_{n' \in I_{\rm
      new}^{(q+1)}}$ conditionally on $\calF^{(q)} \vee
  \sigma(P^{(q+1)})$. Therefore, by definition of the total set of
  labels $I^{(q+1)}$ given in~\eqref{set_of_labels}, one obtains
\begin{align*}
&\E \p{ \prod_{n' \in I^{(q+1)}}  \ph_{n'} \p{X^{(n',q+1)}} \st \calF^{(q)}  \vee \sigma(P^{(q+1)}) }  \\
& \qquad  =\E \p{ \prod_{n \in I^{(q)} \setminus I_{\rm killed}^{(q+1)} }  \ph_{n} \p{X^{(n,q)}} \prod_{n' \in I_{\rm new}^{(q+1)} }  \ph_{n'} \p{X^{(n',q+1)}} \st \calF^{(q)} \vee \sigma(P^{(q+1)}) } \\
% & \qquad=\E \p{ \prod_{n \in I^{(q)} \setminus I_{\rm killed}^{(q+1)}  }  \ph_{n} \p{X^{(n,q)}} \prod_{n' \in I_{\rm new}^{(q+1)} } \pi_{Z^{(q)}}(\ph_{n'})(X^{(P^{(q+1)}(n'),q)}) \st \calF^{(q)} \vee \sigma(P^{(q+1)}) }\\
& \qquad=\prod_{n' \in I_{\rm new}^{(q+1)} } \pi_{Z^{(q)}}(\ph_{n'})(X^{(P^{(q+1)}(n'),q)}) \E \p{\prod_{n \in I^{(q)} \setminus I_{\rm killed}^{(q+1)} }  \ph_{n} \p{X^{(n,q)}}  \st \calF^{(q)} \vee \sigma(P^{(q+1)}) }.
\end{align*}
Next, from the induction hypothesis $(i)_q$, the replicas
$(X^{(n,q)})_{n \in I^{(q)}}$ are independent with distribution
$\p{\pi_{Z^{(q)}}(X^{(n,q)}, \, . \,)}_{n\in I^{(q)}}$ conditionally
on  $\calF^{(q)}$. Since $P^{(q+1)}$ is sampled conditionally on
$\calF^{(q)}$, the replicas $(X^{(n,q)})_{n \in I^{(q)}}$ are also
independent conditionally on $\calF^{(q)}\vee \sigma(P^{(q+1)})$, with
the same distributions. Therefore (notice that $I^{(q)}$ and $ I_{\rm
    killed}^{(q+1)}$ are $\calF^{(q)}\vee \sigma(P^{(q+1)})$-measurable)
%Moreover from the resampling step $(i)$, we can write
\begin{align*}
\E \p{\prod_{n \in I^{(q)} \setminus I_{\rm killed}^{(q+1)} }  \ph_{n} \p{X^{(n,q)}}  \st \calF^{(q)} \vee \sigma(P^{(q+1)}) } 
&=\prod_{n \in I^{(q)} \setminus I_{\rm killed}^{(q+1)} } \pi_{Z^{(q)}}(\ph_{n})(X^{(n,q)})\\
%&= \prod_{n \in I^{(q)} } \pi_{Z^{(q)}}(\ph_{n})(X^{(P^{(q+1)}(n),q)})\\
&= \prod_{n' \in I^{(q)} \setminus I_{\rm killed}^{(q+1)} } \pi_{Z^{(q)}}(\ph_{n'})(X^{(P^{(q+1)}(n'),q)}).
\end{align*}
Gathering the results leads to
$$  \E \p{ \prod_{n' \in I^{(q+1)}}  \ph_{n'} \p{X^{(n',q+1)}} \st \calF^{(q)}  \vee \sigma(P^{(q+1)}) } = \prod_{n' \in I^{(q+1)}}   \pi_{Z^{(q)} } (\ph_{n'})( X^{(P^{(q+1)}(n'),q)}).
$$
From the resampling step and Assumption~\ref{ass:key_1}, the following identity holds: 
\begin{equation}\label{eq:resampling_identity}
\forall q \ge 0, \, \forall n' \in I^{(q+1)},\qquad \pi_{Z^{(q)}}(X^{(n',q+1)}, \, . \,)=\pi_{Z^{(q)}}(X^{(P^{(q+1)}(n'),q)}, \,  . \,).
\end{equation} This concludes the proof of~\eqref{eq:indepq}.

\noindent \emph{Proof of $(ii)_q \Rightarrow (i)_{q+1}$}.
Let us now assume that $(ii)_{q}$ holds. To prove that $(i)_{q+1}$ holds, it is sufficient to check that
\[
    \E \p{ \prod_{n \in I^{(q+1)}} \ph_n\p{X^{(n,q+1)}}  \st \calF^{(q+1)} } = %\\
    %& 
\quad \prod_{n \in I^{(q+1)}}  \pi_{Z^{(q+1)}}(\ph_n)( X^{(n,q+1)} )
. %\ldots \pi_{Z^{(q)}} \p{\ph_{N^{(q)}} ,  X^{(N,q-1)}  } .
  \]
This is again exactly the result of Lemma~\ref{lem:Doob} applied to
$\calX^{(q+1)}$, taking $Z=Z^{(q)}$, $Z'=Z^{(q+1)}$ and $\calG =
\calF^{(q)} \vee \sigma(P^{(q+1)})$ so that $\calG_z = \calF^{(q+1)}_z$
(where, we recall, $\calF^{(q+1)}_z$ is  defined by Equation~\eqref{def_filt_calF}).
\end{proof}

%We then obtain $(i)_{q+1}$ from $(ii)_{q}$. %The proof of Proposition~\ref{pro:1} then follows from a straightforward induction argument.

%where the last line we have again used the fact that $\pi_{Z^{(q)}}(X^{(P^{(q+1)}(n'),q)}, \,  . \,) =  \pi_{Z^{(q)}}(X^{(n',q+1)}, \,  . \,) \quad \forall n' \in  I^{(q+1)}$ by definition of resampling.

%is obtained by recalling that $\calF^{(q+1)}_{Z^{(q)}} = \calF^{(q)}_{Z^{(q)}} \vee \sigma(P^{(q+1)},U^{(q+1)}_{\rm l})$ where $(P^{(q+1)},U^{(q+1)}_{\rm l})$ is independent conditionally on $\calF^{(q)}_{Z^{(q)}}$.
 % \sum_{n \in I^{(q)}} G^{(m,q)} \underbrace{ \pi_{Z^{(q)}}\p{ \ph, X^{(P^{(q)}(m),q-1)} } }_{= \pi_{Z^{(q)}}\p{ \ph, X^{m,q)} } } .
% \]
% where in the last line we remark that conditionally on $ \calF^{(q)} $, we have either $X^{(P^{(q)}(m),q-1)} = X^{m,q-1)} $ or $X^{(P^{(q)}(m),q-1)} \sim \pi_{Z^{(q)}}(. ,X^{(m,q-1)}$. \comment{Detail the argument again.}

Finally, let us prove Proposition~\ref{pro:2}.
\begin{proof}[Proof of Proposition~\ref{pro:2}]
The first equality $(iii)_{q}$ is a direct consequence of the
definition of the branching numbers. The second equality
$(iv)_{q}$ is obtained as a consequence of  Proposition~\ref{pro:1} by combining $(i)_{q}$ and $(ii)_{q}$.

%\vspace{0.5\baselineskip}
\medskip

\noindent\emph{Proof of $(iii)_q$}.
The proof of this assertion is a direct application of the branching rule. Indeed, by definition of the weights 
$G^{(n',q+1)}$ given in \eqref{weights}, by definition of the branching numbers $(B^{(n,q+1)} )_{n \in I^{(q)}}$ as the number of offsprings of the $n$-th replica,
and because these branching numbers are independent of $(G^{(n,q)},X^{(n,q)})_{n \in I^{(q)}}$ conditionally on $\calF^{(q)}$, we get
  \begin{align*}
 \E\p{ \dps \hat{\pi}^{(q+1/2)}(\ph)\st \calF^{(q)}} %& =\E\p{ \sum_{n' \in I^{(q+1)} }  G^{(n',q+1)}\ph(X^{(P^{(q+1)}(n'),q)} ) \st \calF^{(q)} } \\
%& = \E\p{ \sum_{n' \in I^{(q+1)} } \p{\frac{G^{(n,q)}}{ \E\p{ B^{(n,q+1)} \st \calF^{(q)}} }  \ph(X^{(n,q)} ) }_{n = P^{(q+1)}(n')} \st \calF^{(q)} }  \\
& = \E\p{ \sum_{n' \in I^{(q+1)} } \frac{G^{(P^{(q+1)}(n'),q)}}{ \E\p{ B^{(P^{(q+1)}(n'),q+1)} \st \calF^{(q)}} }  \ph(X^{(P^{(q+1)}(n'),q)} )  \st \calF^{(q)} }  \\
&  = \E\p{  \sum_{n \in I^{(q)} } \frac{G^{(n,q)}}{ \E\p{ B^{(n,q+1)} \st \calF^{(q)}} } B^{(n,q+1)} \ph(X^{(n,q)} ) \st \calF^{(q)} }  \\
& = \E\p{ \dps \hat{\pi}^{(q)}(\ph) \st \calF^{(q)} } .
\end{align*}

%\vspace{0.5\baselineskip}
\medskip

\noindent\emph{Proof of $(i)_{q} + (ii)_{q} \Rightarrow (iv)_q$}.
%For $(iv)_q$: is a consequence of $(ii)_{q}$ and $(i)_{q+1}$. 
Using successively $(i)_{q}$, the
identity~\eqref{eq:resampling_identity} and  $(ii)_q$, we have:
\begin{align*}
\E\p{ \dps \hat{\pi}^{(q+1/2)}(\ph) \st \calF^{(q)} \vee \sigma(P^{(q+1)}) } 
&= \sum_{n' \in I^{(q+1)}} G^{(n',q+1)} \pi_{Z^{(q)}} ( \ph )( X^{(P^{(q+1)}(n'),q)} )\\
&= \sum_{n' \in I^{(q+1)}} G^{(n',q+1)} \pi_{Z^{(q)}}( \ph )( X^{(n',q+1)} )  \\
&= \E\p{ \dps \hat{\pi}^{(q+1)}(\ph) \st \calF^{(q)} \vee \sigma(P^{(q+1)}) }.
\end{align*}
\end{proof}

\section{Numerical illustration}\label{sect:num}

The aim of this section is to illustrate the behavior of the AMS algorithm as defined in
Section~\ref{sec:Markov}, in various situations involving
discrete-time approximations~\eqref{eq:Lang} of the overdamped Langevin dynamics in
dimension $1$ and $2$. 

% We discuss in particular the unbiasedness of the estimator of a rare event probability, and the large statistical fluctuations which are observed for some choices of the reaction coordinate.

We would like to discuss in particular the unbiasedness of the AMS estimator
$\hat{p}$ of $p=\P(\tau_B < \tau_A)$ (see the formula~\eqref{eq:p_hat}) whatever the choice of the reaction coordinate~$\xi$, the number of replicas~$\nrep$ and the minimal number~$k$ of
replicas which are declared retired and resampled at each iteration of
the AMS algorithm.  Indeed, from Theorem~\ref{th:unbiased},
we know that
\[
\forall~\xi, \, \nrep,\, k, \quad \E\left(\hat{p}\right) = p=\P(\tau_B < \tau_A).
\]

In the following, $(\hat{p}_m)_{1 \leqslant m
  \leqslant N}$ refers to
independent realizations of the estimator $\hat{p}$ obtained by $N$ independent runs
of the algorithm and the associated empirical mean is denoted by
\begin{equation}\label{eq:emp_mean}
\overline{p}_N = \frac{1}{N} \sum_{m=1}^N\hat{p}_m.
\end{equation}

The variance of the estimator
$\hat{p}$ is also investigated numerically, and it is shown in some
two-dimensional situations that the variance heavily depends on the choice of
the reaction coordinate. In the following, we will denote by
\begin{equation}\label{eq:emp_delta}
\delta_N=2 \times \frac{1.96}{\sqrt{N}} \times \sqrt{\frac{1}{N} \sum_{m=1}^N (\hat{p}_m)^2 - (\overline{p}_N)^2}
\end{equation}
the size of the 95\% empirical confidence interval computed using the
empirical variance obtained over $N$ independent runs of the algorithm.

The section is organized as follows. In Section~\ref{sect:BrownDrift},
we illustrate on a simple one-dimensional test case the importance of
a proper implementation of the branching and splitting steps in order
to obtain an unbiased estimator of~\eqref{eq:phi}. Then, in
Sections~\ref{sec:bi-channel} and~\ref{sec:allen-cahen}, we give two examples in dimension 2 on which we discuss the efficiency of
the AMS algorithm by studying how the convergence of the estimator
depends on the parameters $\xi$,  $\nrep$ and $k$. Finally, in
Section~\ref{sec:conc}, we draw some conclusions and practical
recommendations from these numerical experiments.

\subsection{One-dimensional example: Brownian-drift dynamics}\label{sect:BrownDrift}

Let us first consider a one-dimensional example, with a reaction
coordinate $\xi: \R \to \R$ which is an increasing function. Of course, in this
situation, the AMS algorithm does not depend on
$\xi$. The aim is thus here to show the unbiasedness of the estimator
whatever $\nrep$ and $k$. Moreover, we would like to illustrate the fact that incorrect implementations of the branching and splitting steps may lead to strongly biased results. % This is the case for example when the number of resampled replicas is exactly $k$, or when the resampled replicas do not start strictly beyond the random level set. 

\paragraph{The model}

Let $(X_t)_{t\geqslant 0} \in \R $ be a drifted Brownian motion, starting at $x_0$, with drift $-\mu <0$ and inverse temperature $\beta$: for any $t\geq 0$, $X_t=x_0-\mu t+\sqrt{2\beta^{-1}}W_t$, where $(W_t)_{t\geqslant 0}$ is a standard Brownian motion. We use the explicit Euler-Maruyama method with a time-step size $\Delta t>0$: for any $i\in\N$
\[
X_{i+1}=X_i - \mu \Delta t + \sqrt{2 \beta^{-1} \Delta t} \, G_i, \quad X_0=x_0,
\]
where the random variables $(G_i)_{i\in \N}$ are independent standard Gaussian random variables. Given $a < x_0 < b$, we consider the estimation of
\[
p= \mathbb{P}( \tau_b < \tau_{a}) 
\]
where  $\tau_b \in \N$ and $\tau_{a} \in \N$ are the first hitting times of $ ]b, + \infty[$ and $]-\infty,a[$ respectively.

In the sequel, we choose the initial condition $x_0=1$, as well as the
two barriers $a=0.1$ and $b=z_{\rm max}=1.9$. We choose $\mu=1$ and we use the
following values for the inverse temperature
$\beta\in\left\{8,24\right\}$ in order to have a range of
estimated probabilities over several orders of magnitude. Moreover the
time-step size is $\Delta t=0.1$.  The number of independent runs is $N=6.10^6$.

\paragraph{Biased algorithms}

In order to highlight the importance of a proper implementation of the
splitting and resampling steps when many replicas
have $Z^{(q)}$ as a maximum level, we perform tests with slightly modified versions of the AMS algorithm, which happen to yield biased estimators.
Two biased versions are considered.
\begin{itemize}
\item Version 1: We first consider the algorithm where the number of resampled
  replicas is exactly $k$ even if $K^{(q+1)} > k$, namely if more than $k$ replicas have a
  maximum level smaller than the current level $Z^{(q)}$ of the
  algorithm.
\item Version 2: We then consider a situation where the replicas are possibly
  resampled from a state with a reaction coordinate equal to the current level $Z^{(q)}$.
\end{itemize}

More precisely, the first version (Version $1$) of a biased algorithm is obtained by
modifying the AMS algorithm of Section~\ref{sec:AMS_Markov} as
follows: 
\begin{enumerate}[(i)]
\item At iteration $q$, the working replicas are ordered according to
  their maximum level (possibly with equalities) and the
$k$ working replicas with smallest maximum level are declared
  retired, and the others remain working replicas. The level $Z^{(q)}$ is still defined as the maximum level of
  the $k$-th newly retired replica. As explained in
  Remark~\ref{rem:same_max}, some of the remaining working replicas may have
  their maximum levels equal to $Z^{(q)}$. Then, as in the classical
  AMS algorithm, $k$ new replicas are resampled by picking (randomly) an
  initial condition among the current working replicas
  with maximum level strictly larger than $Z^{(q)}$ (namely with
  labels in $I^{(q)}_{{\rm on}, > Z^{(q)}}$). Replicas with
  maximum equal to $Z^{(q)}$ are not split. The resampling kernel $\pi_z$
  is the same as in the AMS algorithm, see Section~\ref{sec:resampling_kernel}.
\item At the end of the algorithm, the probability $p$ is estimated by
\begin{equation}\label{eq:proba_biased_1}
\p{\frac{\nrep-k}{\nrep}}^{Q_{\rm iter}} \left(\frac{1}{\nrep}\sum_{n\in I^{(Q_{\rm
        iter})}_{\rm on}}\mathds{1}_{\tT_B(X^{(n,Q_{\rm iter})})<\tT_A(X^{(n,Q_{\rm iter})})}\right),
        \end{equation}
which is consistent with the general updating formula~\eqref{weights} for the
weights in the GAMS framework.
\end{enumerate}

The second version (Version $2$) of a biased algorithm is obtained by
modifying the AMS algorithm of Section~\ref{sec:AMS_Markov} exactly as
for Version~$1$, except for the resampling step. Item (i) is modified as:
%\begin{enumerate}[(iii-bis)]
\begin{itemize}
\item[(i-bis)] At iteration $q$, the $k$ working replicas with
  smallest maximum are declared  ``retired''. Then, $k$ new replicas are resampled by picking (randomly) an initial
  condition among the states of \emph{all} the current 
  replicas with maximum levels larger than $Z^{(q)}$, including those
  with maximum levels equal to $Z^{(q)}$. The resampling kernel $\pi_z$
  (defined by~\eqref{eq:piz_chain_1}--\eqref{eq:piz_chain})
  is modified accordingly: the parent trajectory $x$
  is copied up to the time $\tilde{T}_{z}(x)=\inf\left\{t \geq 0 : \xi(x_t)\geq z\right\}$ (with
  a large inequality) instead of $\tT_z(x)$ (which involves a strict
  inequality, see~\eqref{eq:Tz}) and then completed using the Markov dynamics.
\end{itemize}
%\end{enumerate}

These biased versions do not enter into
the GAMS framework. Note that for both modified versions and contrary to the
classical AMS algorithm, the sequence of levels $Z^{(q)}$ is not
necessarily strictly increasing. These modifications will increase the
number of iterations of the algorithm, which in view of
\eqref{eq:proba_biased_1} explains the bias towards lower probability
(see results below).

\paragraph{Results}

In Table~\ref{TableAMS_beta_8} and Table~\ref{TableAMS_beta_24},
estimations with the AMS algorithm are given. We observe that
the estimated probability is stable under changes of $n_{\rm rep}$ and
$k$, with a small confidence interval. This yields the reference
values $p=3.60 10^{-4}$ for $\beta =8$, and $p=1.2 10^{-10}$ for $\beta=24$. For $\beta=8$, we have checked
that these results are
in agreement with
those obtained by a standard direct Monte-Carlo estimation with $6.10^8$ realizations.

In Table~\ref{TableBiais} (where e-$n$ stands for $10^{-n}$), estimations with the two biased versions of
the AMS algorithm with $k=1$ are given. Even for $\beta=8$ (for which the target probability is $p=3.60 10^{-44}$), the bias is non-negligible. For $\beta=24$, the probability can be underestimated by a multiplicative factor $100$ with Version $1$.
The bias induced by using incorrect implementations of the AMS algorithm can thus be very large.

Notice that we have chosen a relatively large timestep ($\Delta
t=0.1$) in order to easily extract the bias introduced by
inappropriate modifications of the AMS algorithm. When the timestep
size $\Delta t$ goes to 0, the two variants we
discussed above get close to the classical AMS algorithm and the bias
disappears, since the probability to observe two replicas with the
same maximum level goes to zero (see
Remark~\ref{rem:same_max}) and the probability that
$\tilde{T}_z(x)$ is different of $\tT_z(x)$ for a parent trajectory
also goes to zero. Finally, we recall that an unbiased variant where exactly $k$
replicas are resampled at each iteration has been presented in Section~\ref{sec:remove}.

\begin{table}[htbp]
\begin{center}
\begin{tabular}{|c||c||c|c|c||c||c||}
$\nrep$ & 10 & 50 & 50 & 50 & 100 & 200  \\
$k$& 1 & 1 & 10 & 20 & 1 & 1  \\
$\overline{p}_N$ & 3.60e-4 & 3.596e-4 & 3.596e-4 & 3.597e-4 & 3.597e-4 & 3.597e-4\\
$\delta_N$ & 0.01e-4 & 0.004e-4 & 0.004e-4 & 0.006e-4 & 0.003e-4 & 0.002e-4
%\\
%$\epsilon_N$ ($\%$) & 0.08682 & 0.02977 & 0.03029 & 0.03138 & 0.02034 & 0.01412\\
\end{tabular}
\caption{Results obtained on the 1d test case with the
  classical AMS algorithm and $\beta=8$.}
\label{TableAMS_beta_8}
\end{center}
\end{table}
\begin{table}[htbp]
\begin{center}
\begin{tabular}{|c||c||c|c|c||c||c||}
$\nrep$ & 10 & 50 & 50 & 50 & 100 & 200  \\
$k$& 1 & 1 & 10 & 20 & 1 & 1  \\
$\overline{p}_N$ & 1.20e-10 & 1.21e-10 & 1.21e-10 & 1.21e-10 & 1.205e-10 & 1.203e-10 \\
$\delta_N$ & 0.3e-10 & 0.03e-10 &  0.03e-10 & 0.03e-10 &  0.01e-10 & 0.005e-10 \\
%$\epsilon_N$ ($\%$)& 6.104 & 0.5504 & 0.7644 & 0.6281 & 0.2066 & 0.1091\\
\end{tabular}
\caption{Results obtained on the 1d test case with the
  classical AMS algorithm and $\beta=24$.}
\label{TableAMS_beta_24}
\end{center}
\end{table}
\begin{table}[htbp]
\begin{center}
\begin{tabular}{|c||c|c|c||c|c|c||}
Version & 1 & 2 & 2 & 1 & 2 & 2\\
$\nrep$ & 100 &100 & 10 & 100 & 100 & 10 \\
$\beta$ & 8 & 8 & 8 & 24 & 24 & 24\\
$p$ & 3.60e-4  & 3.60e-4 & 3.60e-4  & 1.2e-10 & 1.2e-10 & 1.2e-10 \\
$\overline{p}_N$ & 1.74e-4  & 3.257e-4 & 2.96e-4 & 1.40e-12 & 6.05e-11 & 5e-11 \\
$\delta_N$ & 0.03e-4 & 0.003e-4 & 0.01e-4 & 0.1e-12 &  0.07e-11 & 1.5e-11 \\
\end{tabular}
\caption{Results obtained on the 1d test case with biased versions of AMS.}
\label{TableBiais}
\end{center}
\end{table}

\subsection{The first two-dimensional example: the bi-channel problem}\label{sec:bi-channel}

The aim of this two-dimensional example is to investigate the
importance of the choice of the reaction coordinate on the efficiency
of the algorithm, on a typical example which has been used in previous
numerical studies, see~\cite{CerouGuyaderLelievrePommier2011,metzner-schuette-vanden-eijnden-06,park-sener-lu-schulten-03}.

\subsubsection{The model}

We consider the following two-dimensional overdamped Langevin dynamics:
\begin{equation}\label{eq:overdamped}
dX(s)=-\nabla \mathcal{E}(X(s))ds+\sqrt{2\beta^{-1}}dW_s,
\end{equation}
where $W$ is a two-dimensional Wiener process and $\beta>0$ is the inverse temperature.

We use again the Euler-Maruyama method for the time-discretization of
the process $X$. The time step is denoted by $\Delta t$. For our
numerical simulations we take $\Delta t=0.05$. For an initial
condition $X_0=x_0 \in \R^2$ and for $m\in\N$, the numerical scheme reads
\[
X_{m+1}=X_{m}-\Delta t\,\nabla \mathcal{E}(X_{m})+\sqrt{2\beta^{-1}}\,(W_{(m+1)\Delta t}-W_{m\Delta t}),
\]
where $W_{(m+1)\Delta t}-W_{m\Delta t}=\sqrt{\Delta t} \, G_{m}$ and
$(G_m)_{n\in\N}$ are independent Gaussian random variables
in $\R^2$ with zero mean and covariance $\Id$.

In the simulations below, the initial condition is
$X_0=x_{0}=(-0.9,0)$. The potential $\mathcal{E}:\R^2\rightarrow \R$ is given by
\[
\mathcal{E}(x,y)=0.2x^4+0.2\left(y-\frac{1}{3}\right)^4+3e^{-x^2-\left(y-\frac{1}{3}\right)^2}-3e^{-x^2-\left(y-\frac{5}{3}\right)^2}-5e^{-(x-1)^2-y^2}-5e^{-(x+1)^2-y^2}.
\]

% \begin{Rem}
% We have also made some experiments using other initial conditions
% close to the point $x_0=(-0.9,0)$. Since the behavior of the algorithm is qualitatively and quantitatively the same, we do not reproduce the results here.
% \end{Rem}

This potential is plotted on Figure \ref{Fig:Potential_Two_Channels}.
This potential has two global minima connected one to another by two
channels: the upper channel (which goes through the shallow minimum
around $(0,1.5)$) and the lower channel (which goes through the
saddle point around $(0,-0.5)$). The two global
minima are close to $m_A = (x_A,y_A)=(-1,0)$ and $m_B =
(x_B,y_B) = (1,0)$. For some $\rho\in ]0,1[$, we consider the sets $A$ and $B$ defined as the Euclidean open balls of radius $\rho$ around the two minima $m_A$ and $m_B$, namely
\[
\begin{cases}
A=\calB(m_A,\rho)=\left\{ (x,y)\in\R^2 : \sqrt{(x-x_A)^2+(y-y_A)^2}<\rho\right\}\\
B=\calB(m_B,\rho)=\left\{ (x,y)\in\R^2 : \sqrt{(x-x_B)^2+(y-y_B)^2}<\rho\right\}.
\end{cases}
\]
In the numerical applications, we take $\rho=0.05$. Most of the
trajectories starting from $x_0$ hit $A$ before $B$. Moreover, $A$ and
$B$ are metastable states: in the small temperature regime, starting
from $A$ (resp. $B$), it takes a lot of time to leave $A$ (resp. $B$).

We are interested in the estimation of the probability
$p=\P(\tau_B<\tau_A)$,
where the first hitting times $\tau_A$ and $\tau_B$ are defined by
\[
\tau_A=\inf\left\{m\in \N : X_m\in A\right\} \quad \text{ and }
\quad \tau_B=\inf\left\{m\in \N : X_m\in B\right\}.
\]

\begin{figure}[h]
\centering
\includegraphics[scale=0.45]{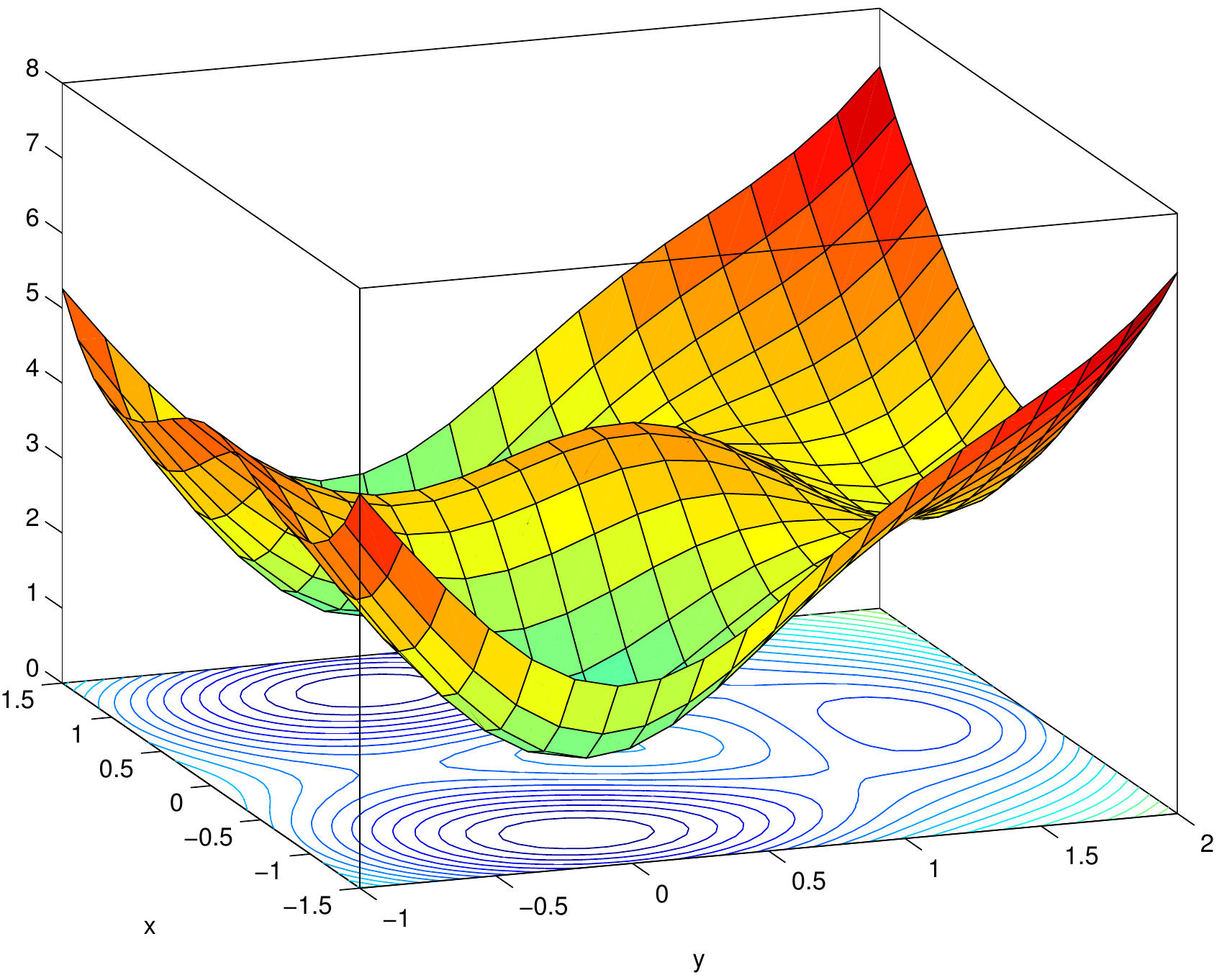}
\caption{Plot of the potential function for the bi-channel problem.}
\label{Fig:Potential_Two_Channels}
\end{figure}

We will consider the results of the AMS algorithm for the three reaction coordinates $\xi^i$ with $i\in\left\{1,2,3\right\}$:
\begin{enumerate}
\item the \textit{norm to the initial point $m_A$:} $\xi^{1}(x,y)=\sqrt{(x-x_A)^2+(y-y_A)^2}$, %with $z_{\rm max}^{2}=\sqrt{7.6}$;
\item the \textit{norm to the final point $m_B$:} $\xi^{2}(x,y)=\xi^{1}(x_B,y_B)-\sqrt{(x- x_B)^2+(y-y_B)^2}$, %with $z_{\rm max}^{3}=\sqrt{7.6}$;
\item the \textit{abscissa:} $\xi^{3}(x,y)=x$.% with $z_{\rm max}^{4}=0.9$.
\end{enumerate}
The maximum levels used in the stopping criterion of the algorithm are
$z_{\rm max}^{1}=z_{\rm max}^{2}=1.9$ and $z_{\rm
  max}^{3}=0.9$. Notice that for $i \in \{1,2,3\}$, we have $B\subset
(\xi^i)^{-1}(]z_{\rm max}^i,+\infty[)$ (see~\eqref{eq:hyp_B}).

In Section~\ref{sec:bi-channel}, we take $k=1$ and the number of
replicas is $\nrep=100$. The values of $\beta$ belong to the set
$\left\{8.67,9.33,10\right\}$ which are associated with probabilities
$p$ ranging approximately from $2.10^{-9}$ to $1.10^{-10}$.

\subsubsection{Evolution of the empirical mean}

Let us first perform simulations with $N$ independent
runs of the algorithm, $N$ varying between $1$ and $6.10^6$. We represent on
Figure~\ref{Joran2D} the evolution as a function of $N$ of the
empirical mean $\overline{p}_{N}$ (defined by \eqref{eq:emp_mean}) and
of the associated $95\%$ confidence intervals
$[\overline{p}_{n}-\delta_N/2,\overline{p}_{n}+\delta_N/2]$ computed using the empirical variance, see \eqref{eq:emp_delta}.

The colors in the figures are as follows: green (solid line) for
$\xi^1$, red (line with crosses) for $\xi^2$ and  blue (line with circles) for $\xi^3$. The full lines represent the evolution of the upper and lower bounds of the confidence intervals, while dotted lines represent the evolution of the empirical means.

\begin{figure}[!htp]
\centering
\includegraphics[scale=0.45]{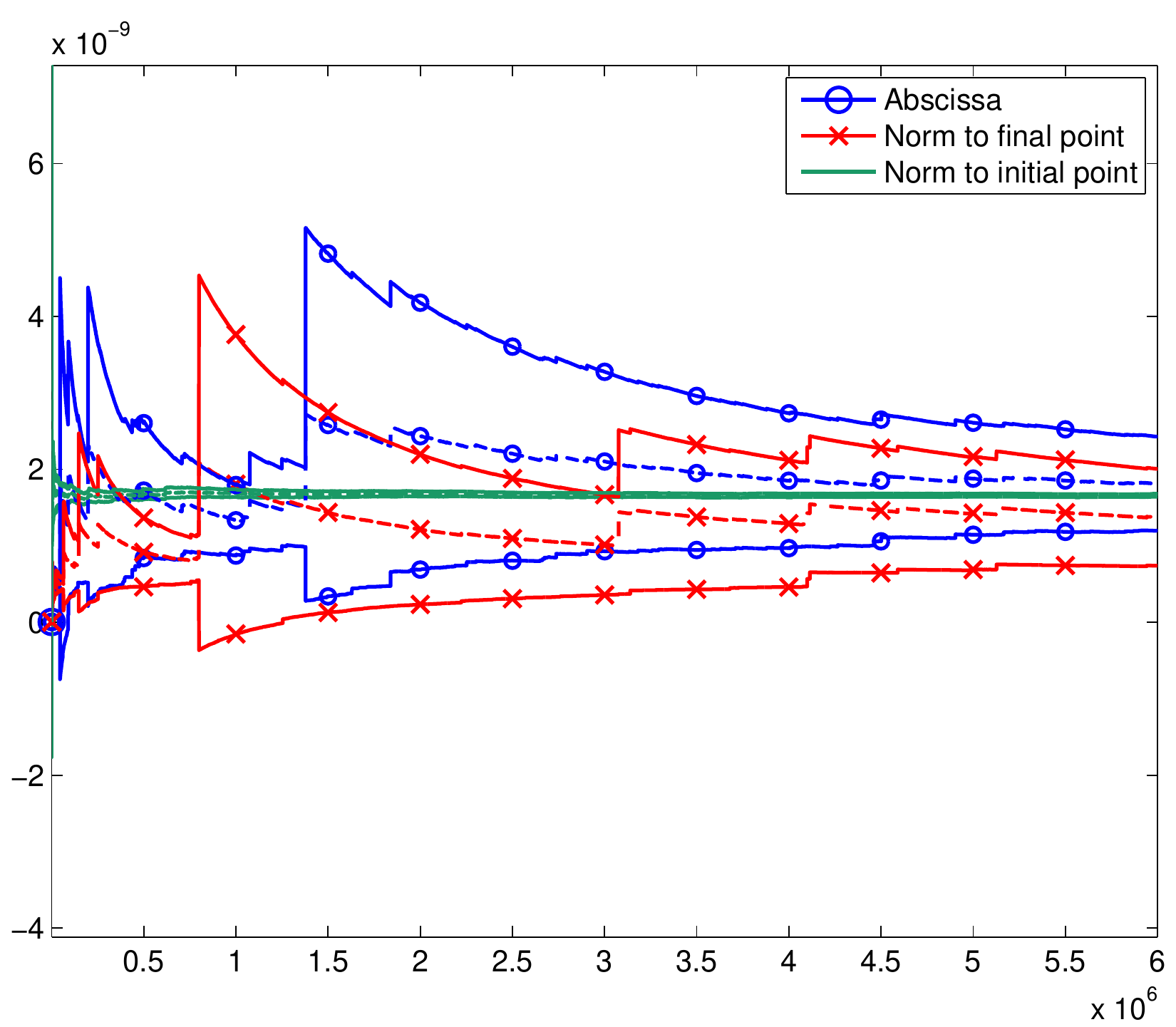} \includegraphics[scale=0.4]{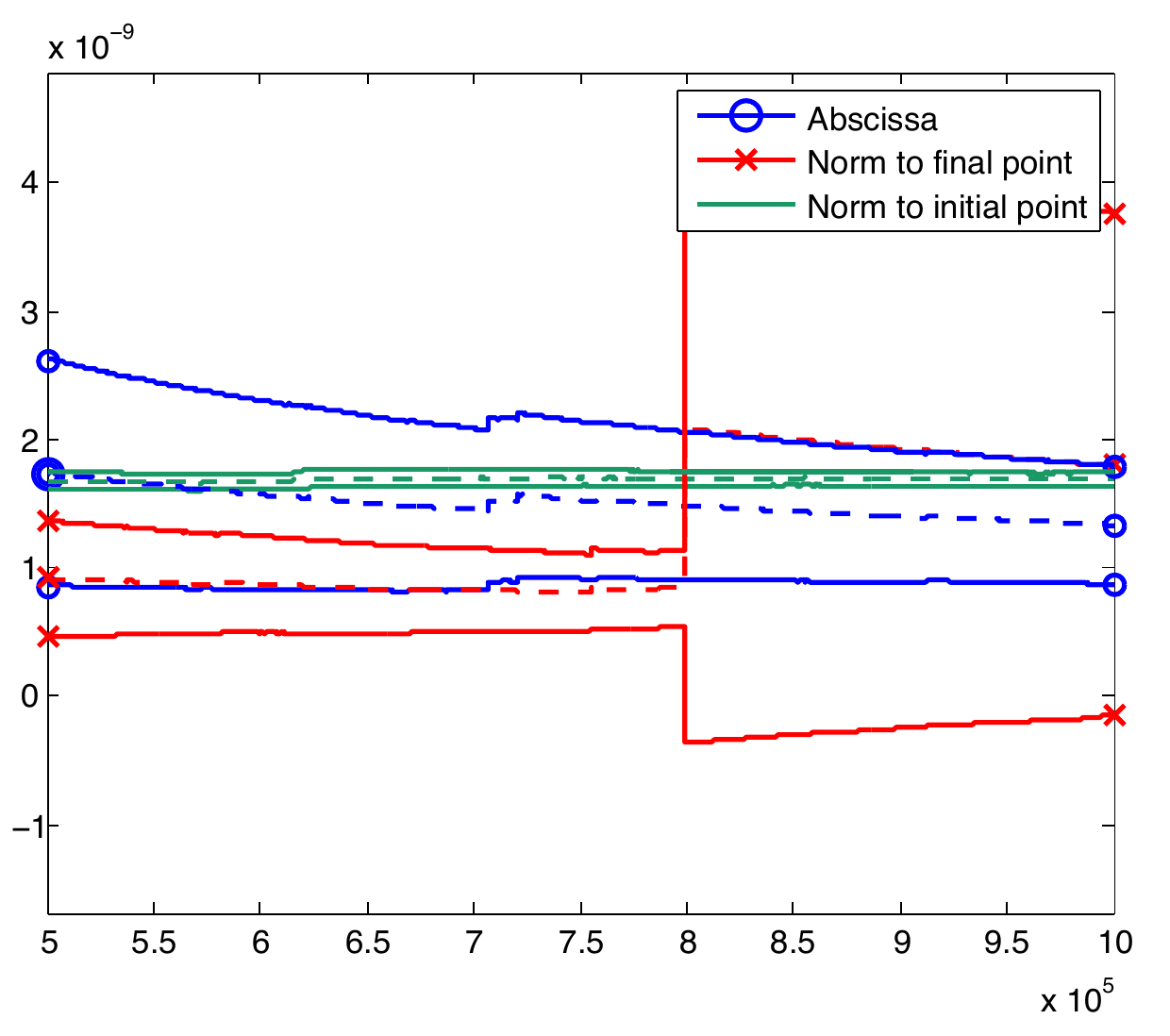}\\
\includegraphics[scale=0.45]{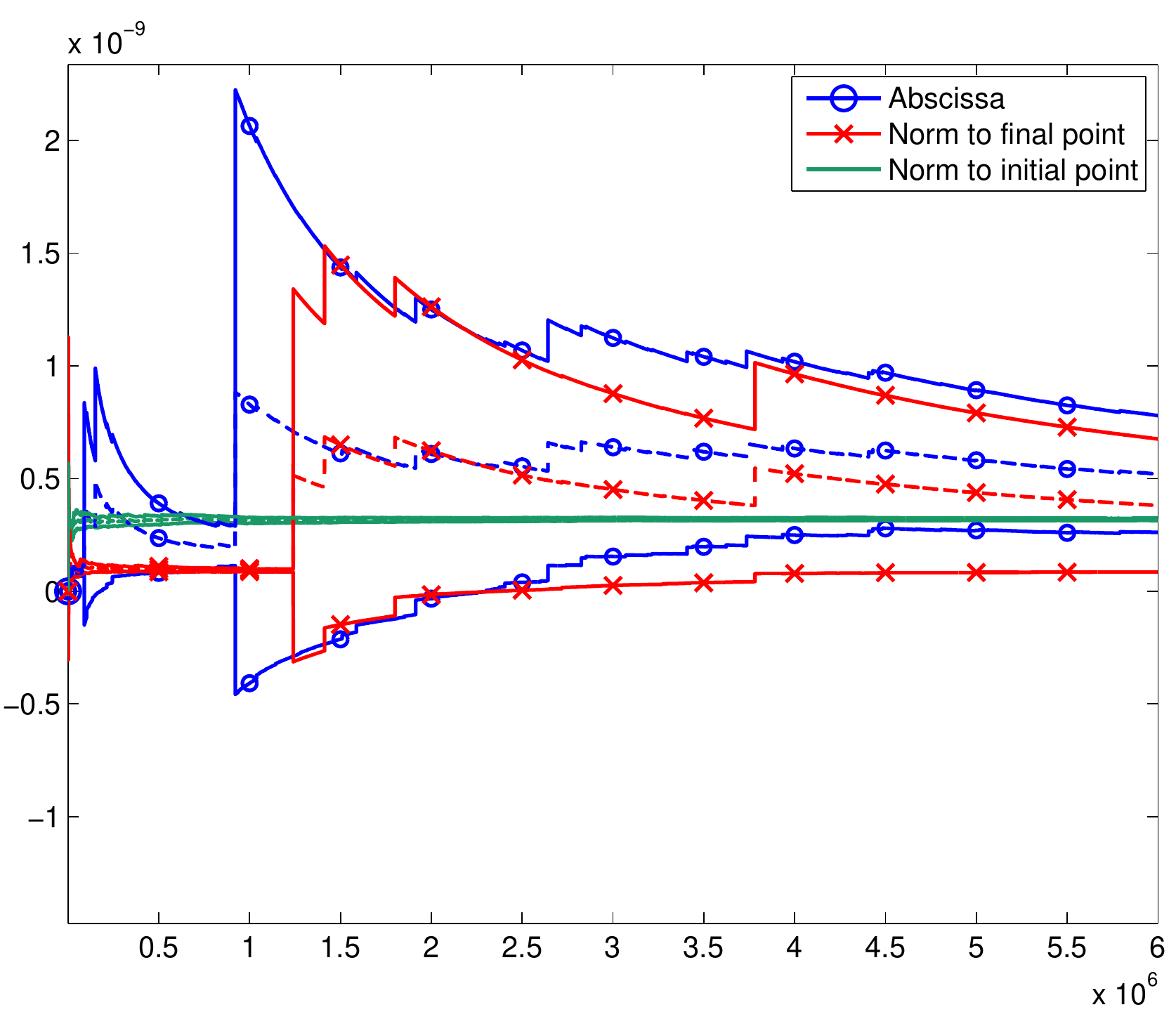} \includegraphics[scale=0.4]{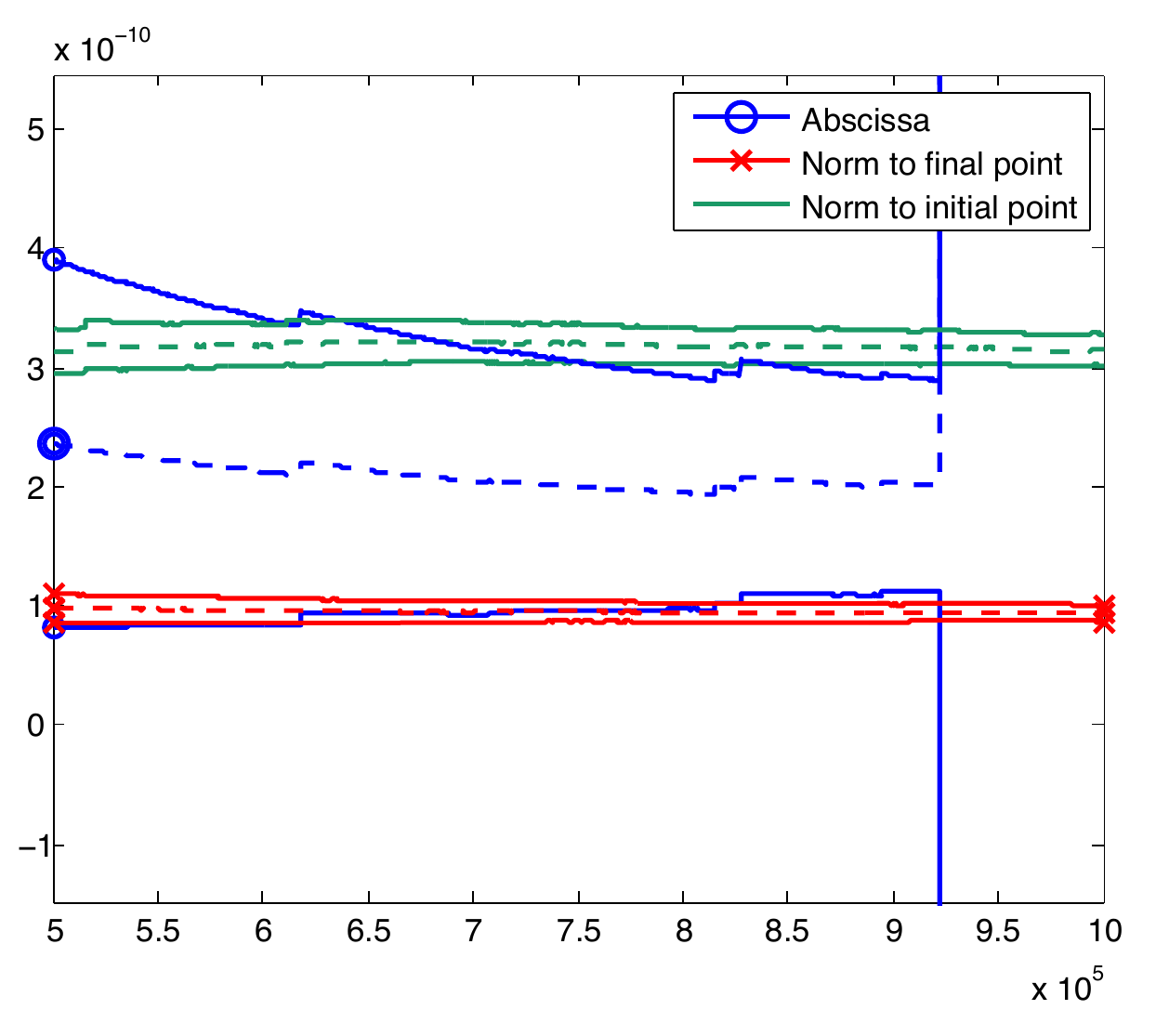} \\
\includegraphics[scale=0.45]{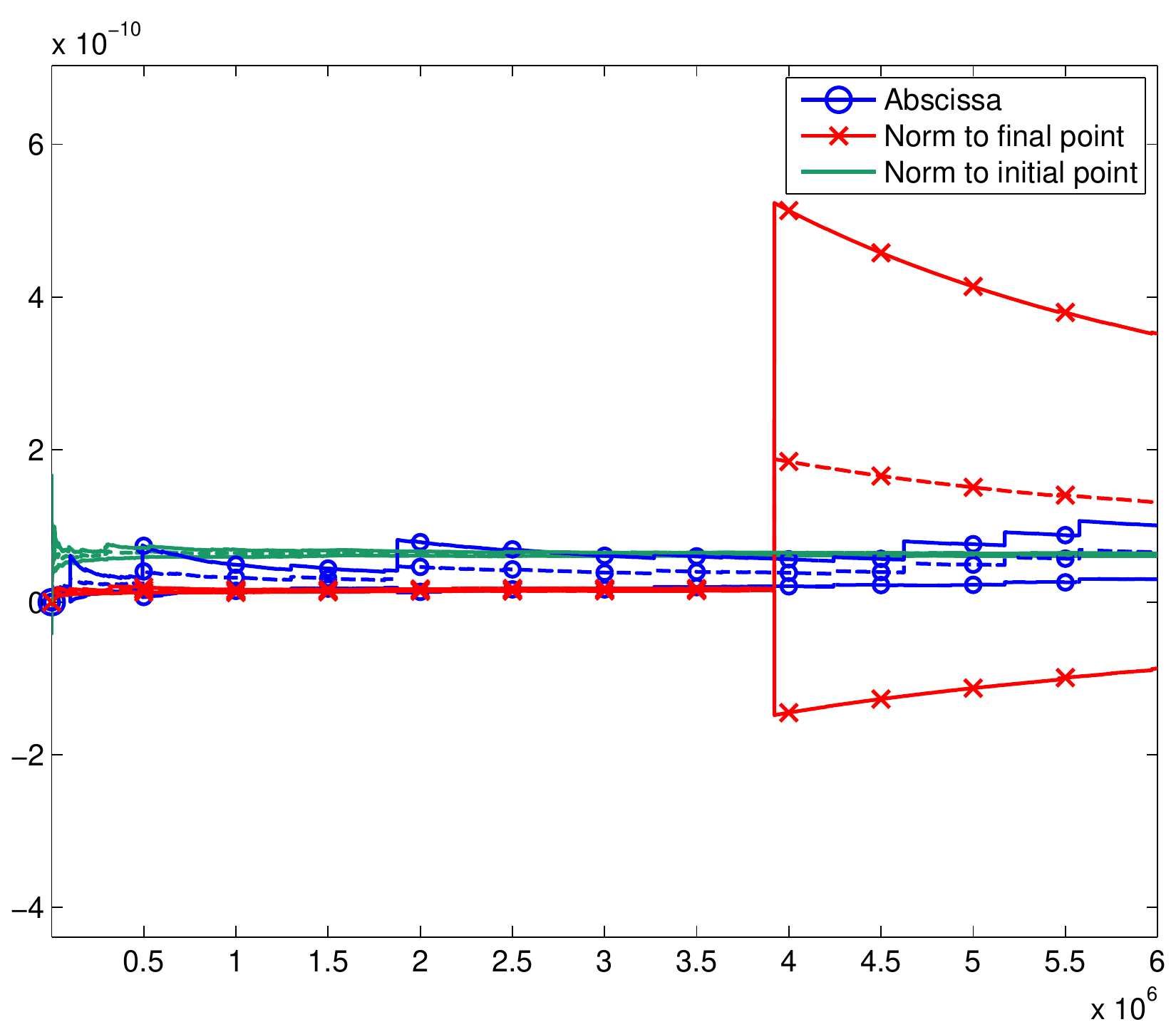}\includegraphics[scale=0.4]{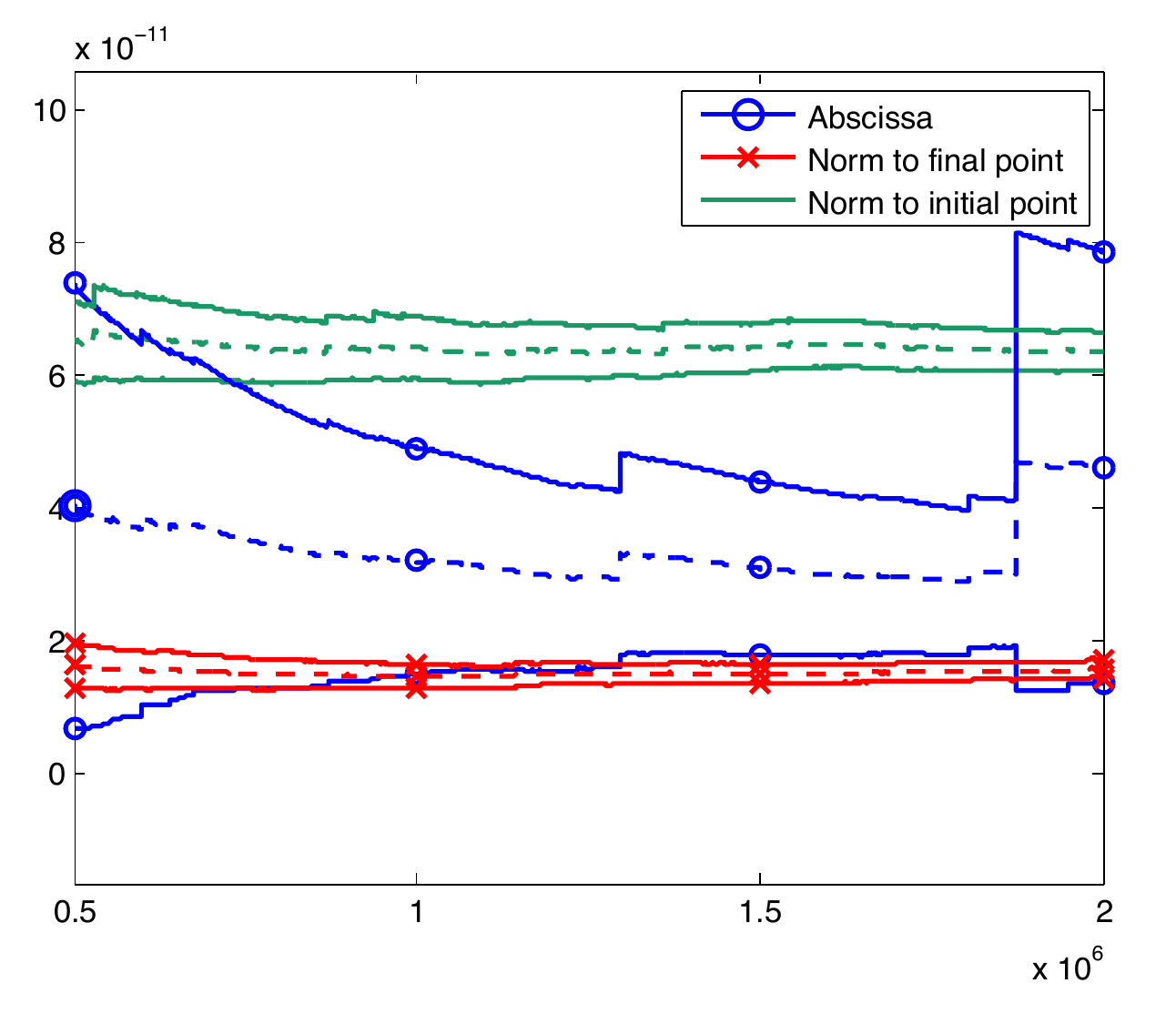}
\caption{Evolution as a function of $N$ of the
empirical mean $\overline{p}_{N}$ and
of the associated $95\%$ confidence intervals
$[\overline{p}_{n}-\delta_N/2,\overline{p}_{n}+\delta_N/2]$. Upper to lower $\beta=8.67, 9.33, 10$. The right inserts are 
  zooms of the left graphs on smaller values of $N$, in order to
  illustrate the ``apparent bias'' phenomenon.}
\label{Joran2D}
\end{figure}

From these simulations, we observe that:
\begin{itemize}
\item When $N$ is sufficiently large, the confidence intervals
  overlap. This is in agreement with the fact that $\hat{p}$
  is an unbiased estimator of $p$ whatever the choice of the reaction coordinate.
\item The statistical fluctuations depend a lot on the reaction
  coordinate. In particular, the results obtained with $\xi^1$ seem
  much better than with $\xi^2$ or $\xi^3$. We will come back to this
  in Section~\ref{sec:fluct_channel}.
\item The confidence interval being computed empirically, one may
  conclude that the algorithm is biased by considering the results for
  $N$ too small (see for example the graphs in the right column in Figure~\ref{Joran2D}). This is due to the fact that the empirical variance dramatically
  underestimates the real variance if $N$ is too small.  This is a well-known
  phenomenon for splitting algorithms in general called ``apparent
  bias'', see \cite{GlassermanHeidelbergerShahabuddinZajic1998}. As $\beta$ gets larger (namely as the
  temperature gets smaller), the number of independent runs $N$
  required to observe overlapping confidence intervals gets larger.
  For example, for $\beta=13$, and $N=6.10^6$, we were not able to get overlapping
confidence intervals for the three reaction
coordinates: this is actually an indication of the fact
that one should pursue computations with larger $N$ in order to get reliable estimates.
\end{itemize}

We observe that there are some realizations for which the estimator of the probability is very
large. These realizations have small probability but they dramatically increase the value of the
empirical mean and of the empirical variance. This explains the jumps which are observed on
the empirical average and confidence interval as a function of the number of realizations, see Figure \ref{Joran2D}.
In the next section, we illustrate this aspect. As is usually the case with
Monte Carlo simulations for rare event simulations, it is impossible
to decide \textit{a priori} if the sample size $N$ is sufficiently large to
give an accurate estimation.

\subsubsection{Heavy tails}

In this section, we give a more quantitative interpretation of the
above observations on the evolution of the empirical mean. For a given
inverse temperature $\beta$, and a given reaction coordinate, we
sample $N$ independent realizations of the algorithm denoted by
$\p{\hat{p}_{m}}_{1\leq m\leq N}$.

To illuminate the importance of the largest values of the estimator in
the empirical average, we compute partial empirical averages over the largest
values or over the smallest values among the $N$ realizations. More
precisely, for a fixed $N_0$, we compute
\begin{equation*}
\overline{p}_{N}^{N_0,{\rm
    large}}=\frac{1}{N_0}\sum_{\ell=N-N_0+1}^{N}\hat{p}_{(\ell)}\text{
  and }
\overline{p}_{N}^{N_0,{\rm small}}=\frac{1}{N-N_0}\sum_{\ell=1}^{N-N_0}\hat{p}_{(\ell)},
\end{equation*}
where $\p{\hat{p}_{(\ell)}}_{1\leq \ell \leq N}$ denotes the order
statistics of $\p{\hat{p}_{m}}_{1\leq m\leq N}$. In other words,
$\overline{p}_{N}^{N_0,{\rm large}}$ is the average over the $N_0$
largest realizations of $\hat{p}_{m}$, while
$\overline{p}_{N}^{N_0,{\rm small}}$ is the average over all the other
values. In particular,
$$\overline{p}_N=\frac{N_0}{N}\overline{p}_{N}^{N_0,{\rm large}}+\left(1-\frac{N_0}{N}\right)\overline{p}_{N}^{N_0,{\rm small}}.$$

In Table~\ref{TableExtremes}, we use $N=6.10^6$, and $N_0=100$ or
$N_0=1000$. The results are obtained with the same realizations as those used in the previous sections. We observe that the largest values
contribute a lot to the empirical average over all the runs whatever the reaction
coordinate. This is characteristic of a heavy tailed distribution.
Indeed, for $\beta=10$ for example, the value of
$\overline{p}_{N}^{N_0,{\rm small}}$ differs a lot from the value of
$\overline{p}_N$. In addition, notice that this difference is more important when
using $\xi^2$ and $\xi^3$, while when using $\xi^1$. This is in
agreement with the fact that we observed better results (in terms of
statistical fluctuations) with $\xi^1$ than with $\xi^2$ and $\xi^3$
in Figure~\ref{Joran2D}. Indeed, distributions with heavier tails
typically lead  to larger fluctuations.

\begin{table}[htbp]
\begin{center}
\begin{tabular}{c|c|c||c|c|c||c|c|c|c||}
 & $\beta$ & $\overline{p}_N$ & $N_0$ & $\overline{p}_{N}^{N_0,{\rm large}}$ & $\overline{p}_{N}^{N_0,{\rm small}}$  & $N_0$ & $\overline{p}_{N}^{N_0,{\rm large}}$ & $\overline{p}_{N}^{N_0,{\rm small}}$\\
$\xi^1$ & $8.67$ & 1.7e-09 & 1000 & 1.3e-e-06 & 1.5e-09 & 100 & 4.0e-06 & 1.6e-09 \\
$\xi^2$ & $8.67$ & 1.3e-09 & 1000 & 5.7e-06 & 4.2e-10 & 100 & 5.2e-05 & 5.1e-10\\
$\xi^3$ & $8.67$ & 1.8e-09 & 1000 & 8.0e-06 & 4.8e-10 & 100 & 6.5e-05 & 7.3e-10\\

$\xi^1$ & $9.33$ & 3.2e-10 & 1000 & 3.0e-07 & 2.7e-10 & 100 & 1.0e-06 & 3.0e-10 \\
$\xi^2$ & $9.33$ & 3.8e-10 & 1000 & 1.9e-06 & 6.4e-11 & 100 & 1.8e-05 & 8.2e-11\\ 
$\xi^3$ & $9.33$ & 5.2e-10 & 1000 & 2.7e-06 & 7.2e-11 & 100 & 2.5e-05 & 1.0e-10\\

$\xi^1$ & $10$ & 6.2e-11 & 1000 & 8.9e-08 & 4.8e-11 & 100 & 4.0e-07 & 5.6e-11 \\
$\xi^2$ & $10$ & 1.3e-10 & 1000 & 7.0e-07 & 9.1e-12 & 100 & 7.2e-06 & 1.3e-11\\
$\xi^3$ & $10$ & 6.5e-11 & 1000 & 3.0e-07 & 1.0e-11 & 100 & 3.0e-06 & 1.5e-11\\
\end{tabular}
\end{center}
\caption{Comparisons of the empirical averages $\overline{p}_N$ with partial empirical
  averages obtained over the $N_0$ largest and $N-N_0$ smallest values among 
  $N$ realizations.}
\label{TableExtremes}
\end{table}

%%% REM SUR LES LOG-NORMALES

\begin{Rem}[On heavy tails in the idealized setting]\label{rem:heavy_tail}
Let us mention another setting where it can be analytically checked
that the distribution of the estimator $\hat{p}$ has heavy tails in
some regime. In a one-dimensional case (or when the committor function
is used as a reaction coordinate), one can show that the AMS
algorithm can be related to the so-called exponential case,
namely when $\Xi(X)$ is distributed with exponential law with
parameter one (see for example~\cite{BrehierLelievreRousset2015}). It
is then easy to show that for $k=1$, and in the regime where $\nrep
\rightarrow +\infty$ and the
target probability is $p=\exp(-\sigma^2 \nrep)$ for some fixed
$\sigma>0$, then $\frac{\hat{p}}{p}$ converges in distribution to the
log-normal distribution $\exp(\sigma Z-\sigma^2/2)$ where
$Z\sim\mathcal{N}(0,1)$ is a standard Gaussian random variable (see~\cite[Proposition 3.4]{Brehier_15}). In particular, $\hat{p}$ has heavy
tails (the median $\exp(-\sigma^2/2)$ of this distribution is smaller than its expectation $1$, so that for a large $\sigma$ the empirical mean under-estimates the expectation).
\end{Rem}

\subsubsection{Fluctuations induced by the two channels}\label{sec:fluct_channel}

In this section, we compare the results when using two reaction coordinates: $\xi^1$ (norm to $m_A$) and $\xi^3$ (abscissa). Since the typical behavior we observe in Figure~\ref{Joran2D} and in Table~\ref{TableProbaCond} is the same for $\xi^2$ and $\xi^3$, we do not repeat the analysis for $\xi^2$.

As explained above, there are two possible channels for the reactive
trajectories going from $A$ to $B$: the upper channel and the lower
channel. For each realization $m$, one can distinguish  the
contributions to the estimator $\hat{p}_m$ of the replicas going
through the upper channel and the ones going through the lower
channel. In the following, for a given path, the trajectory is
associated to the upper (resp. lower) channel if the first hitting
point of the $y$-axis is such that $y>0.5$ (resp. such that
$y \le 0.5$). More precisely, let us define $\Pi_1(x,y)=x$ and
$\Pi_2(x,y)=y$ for any $(x,y)\in\R^2$. For a replica $X=(X_t)_{t\in \N}$ such that
$\tau=\inf\left\{t\in\N : \Pi_1\left(X_{t}\right)>0\right\}<\infty$,
$X\in\text{Upper}$ if $\Pi_2\left(X_{\tau}\right)>0.5$ and
$X\in\text{Lower}$ if $\Pi_2\left(X_{\tau}\right)\le 0.5$.

For each run of the algorithm, we compute the three following quantities:
\begin{itemize}
\item the number of replicas which reach $B$ before $A$:
\[\M^{B}=\sum_{n\in I_{{\rm on}}^{(Q_{{\rm iter}})}}\mathds{1}_{\tT_B(X^{(n,Q_{{\rm iter}})})<\tT_A(X^{(n,Q_{{\rm iter}})})}\]
\item the number of replicas which reach $B$ before $A$ and go through the upper channel:
$$\M^{B,{\rm upper}}=\sum_{n\in I_{{\rm on}}^{(Q_{{\rm iter}})}}\mathds{1}_{\tT_B(X_{}^{(n,Q_{{\rm iter}})})<\tT_A(X_{}^{(n,Q_{{\rm iter}})})}\mathds{1}_{X_{}^{(n,Q_{{\rm iter}})}\in \text{Upper}}$$
\item the number of replicas which reach $B$ before $A$ and go through the lower channel:
$$\M^{B,{\rm lower}}=\sum_{n\in I_{{\rm on}}^{(Q_{{\rm iter}})}}\mathds{1}_{\tT_B(X^{(n,Q_{{\rm iter}})})<\tT_A(X_{}^{(n,Q_{{\rm iter}})})}\mathds{1}_{X_{}^{(n,Q_{{\rm iter}})}\in \text{Lower}}$$
\end{itemize}
Notice that $\M^{B}=\M^{B,{\rm upper}}+\M^{B,{\rm lower}}$ and that
$\M^B \neq 0$ is equivalent to $\hat{p}\neq 0$. When
needed, we explicitly indicate the dependence of these quantities on the
realization by a lowerscript $m$: for $m \in \{1, \ldots ,N\}$, we thus denote
$\M^{B}_m$, $\M^{B,{\rm upper}}_m$ and $\M^{B,{\rm lower}}_m$ the
$m$-th realization of $\M^{B}$, $\M^{B,{\rm upper}}$ and $\M^{B,{\rm lower}}$.

Let us introduce the set
$\Ens_N=\left\{m : \hat{p}_m\neq 0\right\}$ of realizations which
lead to a non zero $\hat{p}$
and the proportion $R_N=\card \Ens_N /N$ of such
realizations. We now divide the realizations in $\Ens_N$ into three
disjoint subsets, with associated proportions.
\begin{itemize}
\item All replicas reaching $B$ before $A$ go through the upper channel:
\[\Ens_{N}^{\rm upper}=\left\{m\in\Ens_N : \M_{m}^{B,{\rm lower}}=0
\right\}  \text{ and }   \rho^{\rm upper}_N=\frac{\card \Ens_{N}^{\rm
    upper} }{\card \Ens_N }.\]
\item All replicas reaching $B$ before $A$ go through the lower channel:
\[\Ens_{N}^{\rm lower}=\left\{m\in\Ens_N : \M_{m}^{B,{\rm upper}}=0
\right\}  \text{ and }   \rho^{\rm lower}_N=\frac{\card\Ens_{N}^{\rm
    lower}}{\card \Ens_N}.\]
\item Both channels are used by the replicas reaching $B$ before $A$ :
\[\Ens_{N}^{\rm mix}=\Ens_N \setminus \left(\Ens_{N}^{\rm upper} \cup
  \Ens_{N}^{\rm lower} \right) \text{ and }  \rho^{\rm
  mix}_N=\frac{\card\Ens_{N}^{\rm mix}}{\card \Ens_N}.\]
\end{itemize}
Obviously, $\rho^{\rm mix}_N=1-\rho^{\rm upper}_N-\rho^{\rm lower}_N$.
Finally, we define conditional estimators for $\hat{p}$ associated
with the partition of $\Ens_N$ defined above:
% \begin{gather*}
% \tilde{p}_{N}^{\rm upper}=\frac{1}{\card\left(\Ens_{N}^{\rm upper}\right)}\sum_{m\in \Ens_{N}^{\rm upper}}\hat{p}_m\\
% \tilde{p}_{N}^{\rm lower}=\frac{1}{\card\left(\Ens_{N}^{\rm lower}\right)}\sum_{m\in \Ens_{N}^{\rm lower}}\hat{p}_m\\
% \tilde{p}_{N}^{\rm mix}=\frac{1}{\card\left(\Ens_{N}^{\rm mix}\right)}\sum_{m\in \Ens_{N}^{\rm mix}}\hat{p}_m
% \end{gather*}
\begin{equation*}
\tilde{p}_{N}^{\rm upper}=\frac{\sum_{m\in \Ens_{N}^{\rm
      upper}}\hat{p}_m}{\card \Ens_{N}^{\rm
      upper}} \,,\quad
\tilde{p}_{N}^{\rm lower}=\frac{\sum_{m\in \Ens_{N}^{\rm
      lower}}\hat{p}_m}{\card\Ens_{N}^{\rm
      lower}}\quad\text{   and   }\quad
\tilde{p}_{N}^{\rm mix}=\frac{\sum_{m\in \Ens_{N}^{\rm mix}}\hat{p}_m}{\card\Ens_{N}^{\rm mix}}.
\end{equation*}

Notice that
$$\overline{p}_{N}=R_N\left(\rho^{\rm upper}_N\tilde{p}_{N}^{\rm upper}+\rho^{\rm lower}_N\tilde{p}_{N}^{\rm lower}+\rho^{\rm mix}_N \tilde{p}_{N}^{\rm mix}\right).$$
In other words, we have separated the non-zero contributions to
$\overline{p}_{N}$ into (i) realizations for which all the replicas go
through the upper channel (first term in the parenthesis), (ii) realizations for which all the replicas go
through the lower channel (second term in the parenthesis), and 
finally (iii) realizations for which the two channels are used by the replicas (third term in the parenthesis).

 Contrary to $\overline{p}_{N}$, the limit when $N \to \infty$ of the estimators
 $R_N$, $\rho^{\rm upper}_N$, $\rho^{\rm lower}_N$, $\rho^{\rm mix}_N$,
$\tilde{p}_{N}^{\rm upper}$, $\tilde{p}_{N}^{\rm mix}$
or $\tilde{p}_{N}^{\rm lower}$ (for a given value of
$\nrep$) depends on the choice of the reaction coordinate $\xi$, see Remark~\ref{rem:on_the_limit} below.

From Table~\ref{TableProbaCond}, we observe that for $\xi^1$,
approximately half of the realizations use exclusively the upper
channel and the other half use the lower channel. The associated conditional
estimators $\tilde{p}_{N}^{\rm upper}$ and $\tilde{p}_{N}^{\rm
  lower}$ are very close. This is not the case for $\xi^3$: only very few
realizations go through the upper channel while the associated
probability $\tilde{p}_{N}^{\rm  upper}$ is much larger than the two
other ones $\tilde{p}_{N}^{\rm
  lower}$ and  $\tilde{p}_{N}^{\rm mix}$. This means that a few
realizations contribute a lot to the empirical average
$\overline{p}_{N}$. This explains the very large confidence intervals
observed with $\xi^3$ (in comparison with those observed for $\xi^1$) on Figure~\ref{Joran2D}.

\begin{table}[htbp]
\begin{center}
\begin{tabular}{c||c||c||c||c|c|c||c|c|c||c||}
 & $\beta$ & $N$ & $R_N$ & $\rho^{\rm upper}_N$  & $\rho^{\rm mix}_N$ & $\rho^{\rm lower}_N$ & $\tilde{p}_{N}^{\rm upper}$  & $\tilde{p}_{N}^{\rm mix}$ & $\tilde{p}_{N}^{\rm lower}$ & $\overline{p}_{N}$\\
$\xi^1$ & $8.67$ & $2.10^6$ & 0.81 & 0.45 & 0.03 & 0.52  & 2.7e-09 & 3.0e-09 & 2.3e-09 & 1.7e-09\\
$\xi^3$ & $8.67$ & $2.10^6$ & 0.99 & 0.0008 & 0.02 & 0.98  & 2.3e-06 & 5.9e-10 & 5.5e-10 & 2.4e-09\\
$\xi^1$ & $9.33$ & $4.10^6$ & 0.72 & 0.51 & 0.02 & 0.47  & 6.2e-10 & 6.3e-10 & 2.5e-10 & 3.2e-10 \\ 
$\xi^3$ & $9.33$ & $4.10^6$ & 0.97 & 0.0005 & 0.02 & 0.98  & 1.0e-06 & 5.6e-11 & 9.7e-11 & 6.0e-10\\
$\xi^1$ & $10$    & $6.10^6$ & 0.62 & 0.51 & 0.01 & 0.48  & 1.5e-10 & 1.4e-10 & 5.2e-11 & 6.2e-11\\ 
$\xi^3$ & $10$    & $6.10^6$ & 0.92 & 0.0004 & 0.01 & 0.99  & 1.4e-07 & 1.5e-11 & 1.8e-11 & 6.8e-11
\end{tabular}
\end{center}
\caption{The bi-channel case. Proportion and conditional
  probabilities for two reaction coordinates: the norm to the initial
  point ($\xi^1$) and the abscissa ($\xi^3$).}
\label{TableProbaCond}
\end{table}

\begin{Rem}[On $\rho^{\rm mix}_N$]
We observe that for both reaction coordinates the value of $\rho^{\rm mix}_N$ is very small: on most realizations, if at least one replica reaches $B$ before $A$ then all replicas reaching $B$ go through the same channel. This effect is due to the rather small number of replicas (namely $n_{\rm rep}=100$). When $n_{\rm rep}$ increases, we observe both upper and lower paths on each realization, see for instance \cite{CerouGuyaderLelievrePommier2011} for such experiments.
\end{Rem}

\begin{Rem}[On the efficiency of $\xi^1$]\label{rem:Rn}
We see in Table \ref{TableProbaCond} that $R_N$ is smaller for the best reaction coordinate $\xi^1$ (in terms of fluctuations) than for $\xi^3$. Indeed, the potential $V$ admits a local minimum at $x^{*}\simeq(0,1.54)$ with $\xi^1(x^{*})\simeq1.83$, which is very close to $z_{\rm max}^{1}=1.9$. Notice that the influence of the local minimum is much weaker when using the abscissa since $\xi^3(x^{*})=0$ which is far from $z_{\rm max}^{3}=0.9$ and since most of the trajectories go through the lower channel for this reaction coordinate.

Using $\xi^1$ as a reaction coordinate, replica $X$ going though the upper channel is likely to get trapped around the upper local minimum, and the resampling procedure may produce trajectories which go back to $A$ without increasing the level of the parent replica: as a consequence a higher rate of extinction is observed for $\xi^1$ than for $\xi^3$.

Moreover, it may happen that a replica satisfies $\tT_{z_{\rm max}
}(X)<\tT_A(X)$ without hitting $B$ before $A$; it has then no
contribution in the estimator of the probability
$\P(\tau_B<\tau_A)$. On the left plot on Figure~\ref{Fig:Rn_degenerescence}, we represent the
$\nrep=20$ replicas obtained at the end of one realization of the
algorithm where $\beta=6.33$, and using $\xi^1$: only $6$  replicas
out of $20$ reach $B$ even if all of them have a maximum level larger
than the stopping level $z_{\rm max}$. In such a case, $P_{\rm
  corr}=6/20$ (see \eqref{eq:Pcorr}).

\end{Rem}

% Using $\xi^1$, the algorithm is thus very efficient during the first iterations, since it allows to escape from the metastable $A$ via the upper channel; nevertheless it far from being optimal due to the phenomena explained in Remark \ref{rem:Rn}. To overcome these issues, the adaptive construction of a suitable reaction coordinate might help improve the performance of the algorithm. We will investigate this direction in future works.

\begin{Rem}[On the degeneracy of the branching tree]\label{rem:degenerescence}
On the right plot of Figure~\ref{Fig:Rn_degenerescence}, another
phenomenon is illustrated: for a small number of replicas, we
typically observe that the working replicas at the final iteration of
the algorithm have only a few common ancestors. For example, for the
realization of the algorithm represented on the right plot of Figure~\ref{Fig:Rn_degenerescence} with $\beta=6.33$ and using $\xi^3$ (abscissa) as the reaction coordinate, the $n_{\rm rep}=20$ working replicas are all issued from only two ancestors, at the end of the algorithm. Of course, the number of common ancestors increases when $\nrep$ is larger (see for example \cite{CerouGuyaderLelievrePommier2011}).
%
%Notice that this phenomenon also occurs when using $\xi^1$ as the reaction coordinate.
\end{Rem}

% \begin{figure}[!htp]
% \centering
% \includegraphics[scale=0.4]{simus/NO_beta633_SP20_RN=03.eps}
% \caption{Trajectories obtained at the end of a realization for which $\beta=6.33$, $n_{\rm rep}=20$ and using $\xi^{1}$. Only $6$ of the working replicas at the end of the algorithm have reached $B$ and thus $P_{\rm corr}=6/20$; see Remark \ref{rem:Rn}.}
% \label{Fig:Rn}
% \end{figure} 

% \begin{figure}[!htp]
% \centering
% \includegraphics[scale=0.4]{simus/AB_beta633_SP20_Ancetre2.eps}
% \caption{Trajectories obtained at the end of a realization for which $\beta=6.33$, $\nrep=20$ and using $\xi^3$. At the end of the algorithm, the $20$ replicas are all issued from only two ancestors (see Remark \ref{rem:degenerescence}).}
% \label{Fig:degenerescence}
% \end{figure} 

\begin{figure}[!htp]
\centering
\includegraphics[scale=0.4]{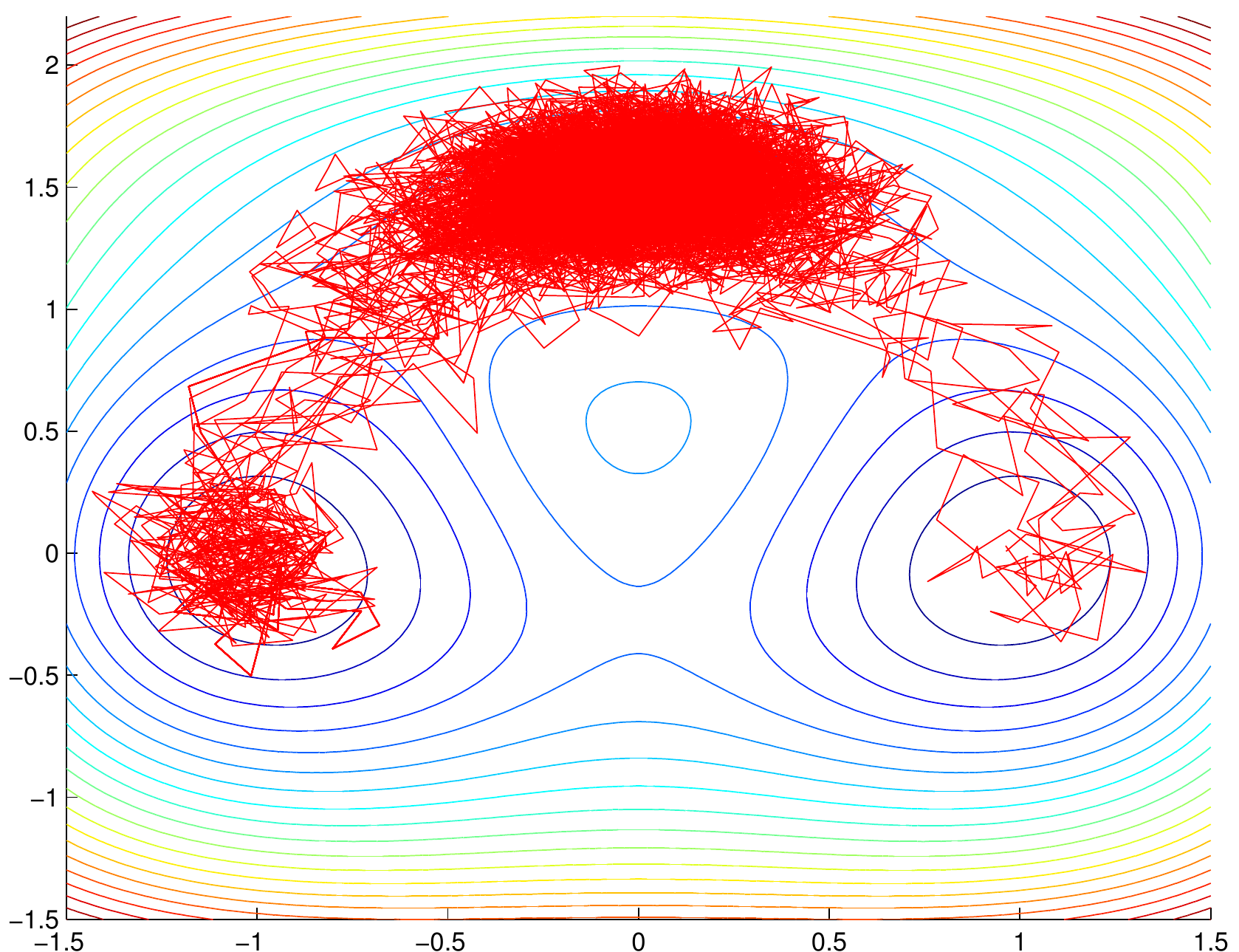}~~\includegraphics[scale=0.4]{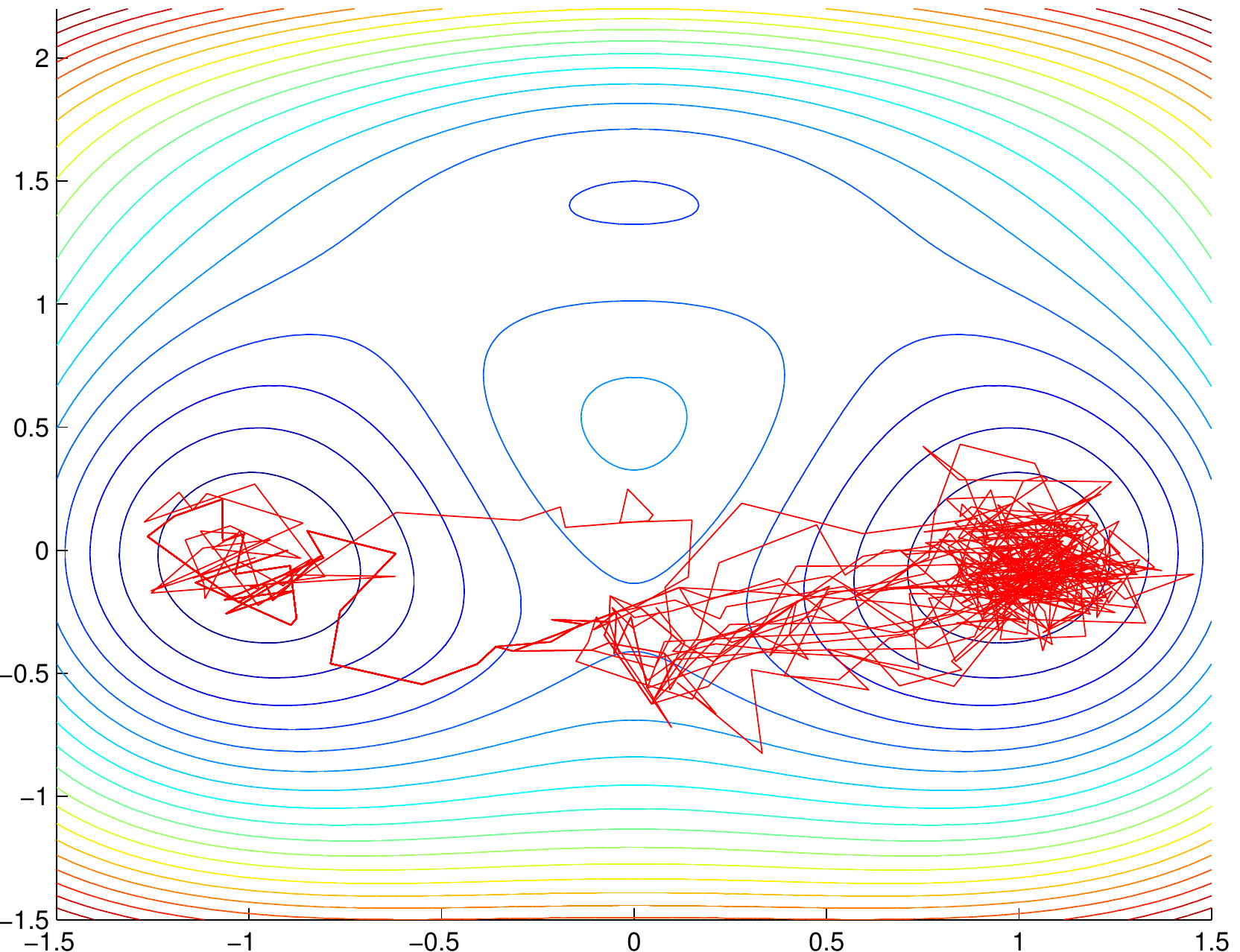}
\caption{Trajectories obtained at the end of a realization of the algorithm for which
  $\beta=6.33$, $n_{\rm rep}=20$. Left: the reaction coordinate is
  $\xi^{1}$ and only $6$ of the working replicas have reached $B$ ($P_{\rm corr}=6/20$), see Remark
  \ref{rem:Rn}. Right: the reaction coordinate is $\xi^3$ and the $20$
  replicas are all issued from only two ancestors, see Remark~\ref{rem:degenerescence}.}
\label{Fig:Rn_degenerescence}
\end{figure}

\begin{Rem}\label{rem:on_the_limit}[On the limit of $R_N$, $\rho^{\rm upper}_N$, $\rho^{\rm
    lower}_N$, $\rho^{\rm mix}_N$ when $N\to \infty$]
The fact that the limits when $N$ goes to infinity of the proportions
$R_N$, $\rho^{\rm upper}_N$, $\rho^{\rm lower}_N$, $\rho^{\rm mix}_N$
or the conditional probabilities $\tilde{p}_{N}^{\rm upper}$, $\tilde{p}_{N}^{\rm mix}$
or $\tilde{p}_{N}^{\rm lower}$  depend on $\xi$ is not in
contradiction with the unbiasedness result of Theorem~\ref{th:unbiased}. Indeed these quantities are not estimators of
the form~\eqref{eq:phi_hat}. In particular, there are not associated
with a functional of the dynamics of the
form~\eqref{eq:phi}. For example, $\tilde{p}_{N}^{\rm upper}$ should
not be confused with the empirical estimator $\overline{p_{N}^{\rm upper}}=\frac{1}{N}\sum_{m=1}^{N}\hat{p}_{m}^{\rm upper}$, where (for a fixed realization $m$)
\begin{align*}
&\hat{p}^{{\rm upper}}=\sum_{n\in I^{(Q_{\rm iter})}_{\rm on}}G^{(n,Q_{\rm iter})}\mathds{1}_{\tT_B(X^{(n,Q_{\rm iter})})<\tT_A(X^{(n,Q_{\rm iter})})}\mathds{1}_{X^{(n,Q_{\rm iter})}\in\text{Upper}}\\
&=\frac{\nrep-K^{(Q_{\rm iter})}}{\nrep}\ldots
\frac{\nrep-K^{(1)}}{\nrep}\Bigg(\frac{1}{\nrep}\sum_{n\in I^{(Q_{\rm
        iter})}_{\rm on}}\mathds{1}_{\tT_B(X^{(n,Q_{\rm iter})})<\tT_A(X^{(n,Q_{\rm iter})})}\mathds{1}_{X^{(n,Q_{\rm iter})}\in\text{Upper}}\Bigg)
\end{align*}
which is an unbiased estimator of $\P\left(\tT_B(X)<\tT_A(X) \text{
    and }X\in\text{Upper}\right)$. The (unbiased) estimator
$\hat{p}^{{\rm lower}}$ of $\P\left(\tT_B(X)<\tT_A(X) \text{ and } X\in\text{Lower}\right)$ is defined similarly, as well as the empirical average $\overline{p_{N}^{\rm lower}}=\frac{1}{N}\sum_{m=1}^{N}\hat{p}_{m}^{\rm lower}$.

Table \ref{TableUpLowProba} contains the values of $\overline{p_{N}^{\rm upper}}$ and $\overline{p_{N}^{\rm lower}}$ computed using the same realizations as above. We include the values of the width $\delta_{N}^{\rm upper}$ and $\delta_{N}^{\rm lower}$ of the confidence intervals associated with the Monte-Carlo procedure (see \eqref{eq:emp_delta}).

\begin{table}[htbp]
\begin{center}
\begin{tabular}{c||c||c||c|c||c|c||c||}
 & $\beta$ & $N$ & $\overline{p_{N}^{\rm lower}}$ & $\delta_{N}^{\rm lower}$ & $\overline{p_{N}^{\rm upper}}$ & $\delta_{N}^{\rm upper}$ & $\overline{p}_N$ \\
$\xi^1$ & $8.67$ & $2.10^6$ & 5.2e-10 & 5.1e-11 & 1.2e-09 & 4.4e-11 & 1.7e-09 \\
$\xi^3$ & $8.67$ & $2.10^6$ & 5.4e-10 & 5.0e-11 & 1.9e-09 & 3.5e-09 & 2.4e-09 \\
$\xi^1$ & $9.33$ & $4.10^6$ & 9.1e-11 & 9.4e-12 & 2.3e-10 & 7.5e-12 & 3.2e-10 \\
$\xi^3$ & $9.33$ & $4.10^6$ & 8.3e-11 & 7.9e-12 & 5.2e-10 & 7.7e-10 & 6.0e-10 \\
$\xi^1$ & $10$    & $6.10^6$ & 1.6e-11 & 3.0e-12 & 4.6e-11 & 1.8e-12 & 6.2e-11 \\ 
$\xi^3$ & $10$    & $6.10^6$ & 1.8e-11 & 2.2e-12 & 5.0e-11 & 7.0e-11 & 6.8e-11
\end{tabular}
\end{center}
\caption{The bi-channel case. Estimation of upper and lower channels probabilities for two reaction coordinates: the norm to the initial
  point ($\xi^1$) and the abscissa ($\xi^3$).}
\label{TableUpLowProba}
\end{table}

We observe good agreement between the values computed with the two
reaction coordinates, as predicted by the unbiasedness result of
Theorem~\ref{th:unbiased}. The estimates of the probability
$\P\left(\tT_B(X)<\tT_A(X) \text{ and }X\in\text{Lower}\right)$ and the corresponding empirical variances obtained with both reaction coordinates are very close. However, when using the abscissa $\xi^3$ we see that the confidence interval $[\overline{p_{N}^{\rm upper}}-\delta_{N}^{\rm upper}/2;\overline{p_{N}^{\rm upper}}+\delta_{N}^{\rm upper}/2]$ is much larger than the one when using $\xi^1$; the variance is larger because taking the upper channel is less likely when using $\xi^3$. Moreover, if we plot the equivalent of Figure \ref{Joran2D} for $\overline{p_{N}^{\rm upper}}$, we observe the same kind of behavior for the evolution of the confidence interval related with $\overline{p_{N}^{\rm upper}}$ as function of the number of realizations $N$: $\overline{p}_{N}$ and $\overline{p_{N}^{\rm upper}}$ jump at the same values of $N$.
\end{Rem}

\subsection{The second two-dimensional example}\label{sec:allen-cahen}

Let us finally consider another example in dimension two, already used
in~\cite{BrehierGazeauGoudenegeRousset}. Our aim is
to show that the very large fluctuations observed with some reaction
coordinates in the first two-dimensional example are related to the existence of multiple pathways from $A$
to $B$. This is actually very much related to some discussions in the
paper~\cite{GlassermanHeidelbergerShahabuddinZajic1998} about the origins of the so-called ``apparent bias''
which is observed with splitting algorithms.

In this example, one parameter governs the shape of the potential. The choice of the reaction coordinate and the statistical behavior of the estimator strongly depend on its value. One goes from a
situation where the estimator has the same statistical behavior
whatever the reaction coordinates  (when
there is only one pathway from $A$ to $B$) to
a situation where the estimation using one of the reaction coordinate
deteriorates for too small Monte-Carlo sample sizes (when two pathways
link $A$ to $B$), even though the estimated probability is
approximately the same in both situations.

\subsubsection{The model}

The second example is inspired by the space discretization of the
Allen-Cahn equation (see~\cite{BrehierGazeauGoudenegeRousset}). The
dynamics is again the overdamped Langevin
equation~\eqref{eq:overdamped} discretized using the Euler-Maruyama
method.
The potential function $\mathcal{E}_{\gamma}$ depends on a parameter $\gamma>0$ and is given by
\[
\mathcal{E}_{\gamma}(x,y)=\gamma(x-y)^2+\frac{1}{2}\left(V(x)+V(y)\right),
\]
where $V(z)=\frac{z^4}{4}-\frac{z^2}{2}$ is a double-well potential. 
% We thus have
% \[\nabla \mathcal{E}_{\gamma}(x,y)=\begin{pmatrix} 2\gamma(x-y)+\dfrac{1}{2}\left(\dfrac{x^3}{3}-x\right) \\ 2\gamma(y-x)+\dfrac{1}{2}\left(\dfrac{y^3}{3}-y\right) \end{pmatrix}.\]

% \begin{figure}[h]
% \centering
% \includegraphics[scale=0.4]{simus/Potential_AC_G1.eps}
% \includegraphics[scale=0.4]{simus/Potential_AC_G0_1.eps}
% \includegraphics[scale=0.3]{simus/Potential_AC_G1_Isolines.eps}
% \includegraphics[scale=0.3]{simus/Potential_AC_G0_1_Isolines.eps}
% \caption{Plot of the potential function (upper) and of associated isolines (lower) for the discretized Allen-Cahn problem. Left: $\gamma=1$; right: $\gamma=0.1$.}
%   \label{Fig:Potential_AC}
% \end{figure}

\begin{figure}[h]
\centering
\includegraphics[scale=0.4]{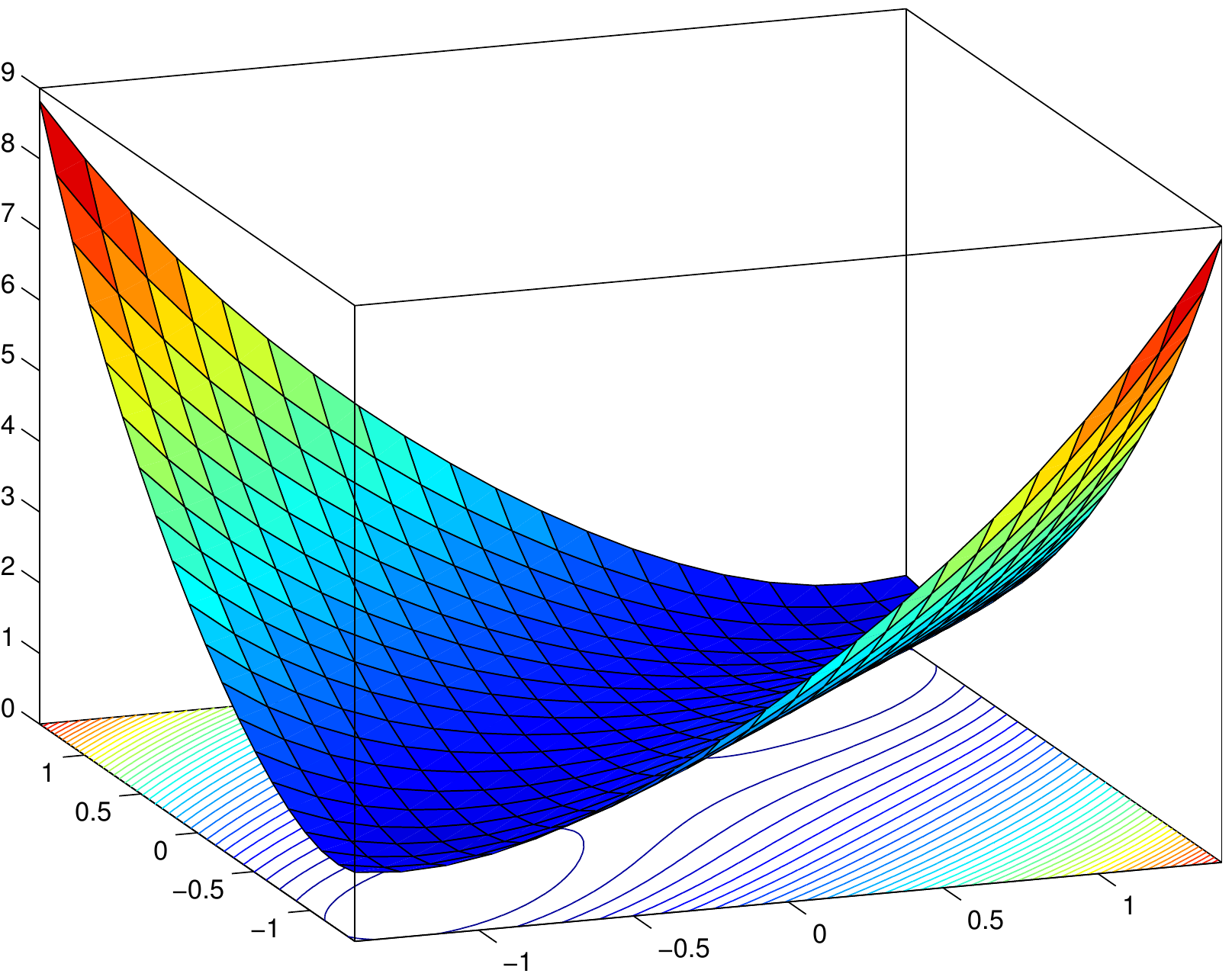}
\includegraphics[scale=0.4]{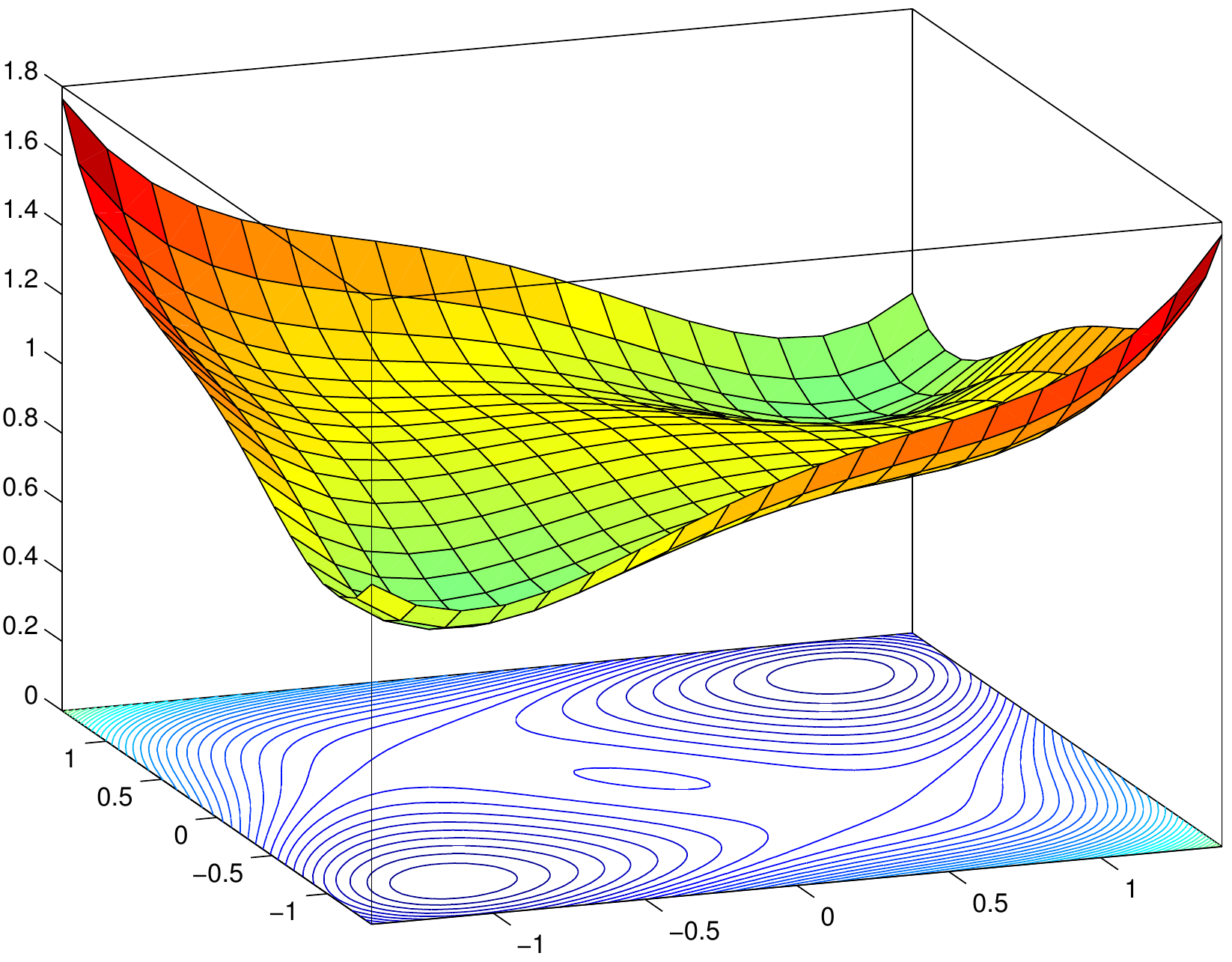}
\caption{Plot of the potential function for the discretized Allen-Cahn problem. Left: $\gamma=1$; right: $\gamma=0.1$.}
  \label{Fig:Potential_AC}
\end{figure}

For any value of $\gamma>0$, there are two global minima $m_A=(x_A,y_A)=(-1,-1)$ and $m_B=(x_B,y_B)= (1,1)$. Moreover, this model exhibits bifurcations with respect to the parameter $\gamma$. When $\gamma>1/8$, $(0,0)$ is the only saddle point, whereas for $\gamma<1/8$ the latter point degenerates into a local maximum and additional saddle points appear (as well as local minima if $\gamma$ is further decreased).

The AMS algorithm is tested with four reaction coordinates (the first
three ones being the same as in Section~\ref{sec:bi-channel}):
\begin{enumerate}
\item the \textit{norm to the initial point $m_A$:} $\xi^{1}(x,y)=\sqrt{(x-x_A)^2+(y-y_A)^2}$, %with $z_{\rm max}^{2}=\sqrt{7.6}$;
\item the \textit{norm to the final point $m_B$:} $\xi^{2}(x,y)=\xi^{1}(x_B,y_B)-\sqrt{(x- x_B)^2+(y-y_B)^2}$, %with $z_{\rm max}^{3}=\sqrt{7.6}$;
\item the \textit{abscissa:} $\xi^{3}(x,y)=x$,% with $z_{\rm max}^{4}=0.9$.
\item the \textit{magnetization:} $\xi^4(x,y)=(x+y)/2$.% with c;
\end{enumerate}
The fourth reaction coordinate is called the magnetization because of
its interpretation in the original Allen-Cahn problem. In the figures
below, we associate a color to each reaction coordinate: green for
$\xi^1$, red for $\xi^2$,  blue (line with circles) for $\xi^3$ and cyan for
$\xi^4$.

For the reaction coordinate $\xi^i$, the maximum level used in the stopping criterion of the algorithm is denoted by $z_{\rm max}^{i}$. In the simulations, the following values are used: $z_{\rm max}^{1}=z_{\rm max}^{2}=\sqrt{7.6}$ and $z_{\rm max}^{3}=z_{\rm max}^{4}=0.9$.

As above, 
\[
\begin{cases}
A=\calB(m_A,\rho)=\left\{ (x,y)\in\R^2 : \sqrt{(x-x_A)^2+(y-y_A)^2}<\rho\right\}\\
B=\calB(m_B,\rho)=\left\{ (x,y)\in\R^2 : \sqrt{(x-x_B)^2+(y-y_B)^2}<\rho\right\},
\end{cases}
\]
with $\rho=0.05$. Notice that for $i \in \{1,2,3,4\}$, we have
$B\subset (\xi^i)^{-1}(]z_{\rm max}^i,+\infty[)$ (see~\eqref{eq:hyp_B}).

The deterministic initial condition is $X_0=x_0=(-0.9,-0.9)$, and we always take $k=1$. By
default, the number of replicas is taken equal to $n_{\rm rep}=100$,
and the empirical averages are computed with $N=10^6$ independent
realizations. When $\gamma=0.1$, we also take $n_{\rm rep}=10$ (with $N=6.10^6$) and
$n_{\rm rep}=1000$ (with $N=10^5$). Notice that we have also tested the algorithm in
the latter case when $k>1$: since we observe the same kind of behavior
as for $k=1$, we do not present the results of these numerical simulations.

\subsubsection{Simulations for $\gamma=1$ and $\beta \in\left\{10,20,40,80\right\}$}\label{sec:gentil}

Let us first consider the case $\gamma=1$. In this situation there is
only one reactive path to go from $A$ to $B$, going through the saddle point
$(0,0)$. Let us consider the following values for the inverse
temperature $\beta\in\left\{10,20,40,80\right\}$. We plot on Figure~\ref{AC2D_beta_20_gamma_1_nrep100} the evolution of the confidence
interval for the estimator $\hat{p}$ as a function of the
number of independent realizations, for $\nrep=100$. We observe that the confidence
intervals overlap, and that the statistical fluctuations are very
similar whatever the reaction coordinate.

% \begin{figure}[!htp]
% \centering
% \includegraphics[scale=0.5]{simus/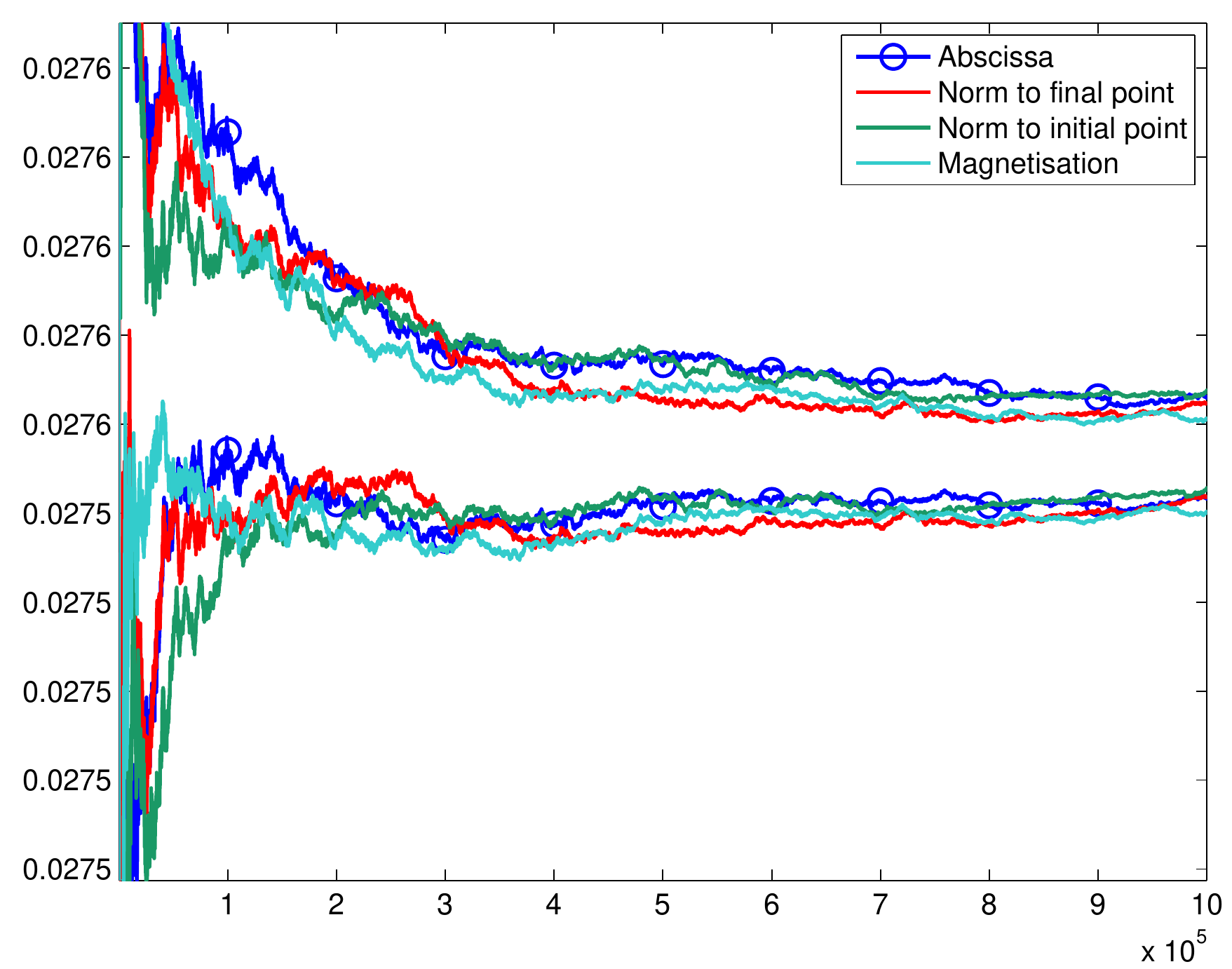} \\
% \includegraphics[scale=0.5]{simus/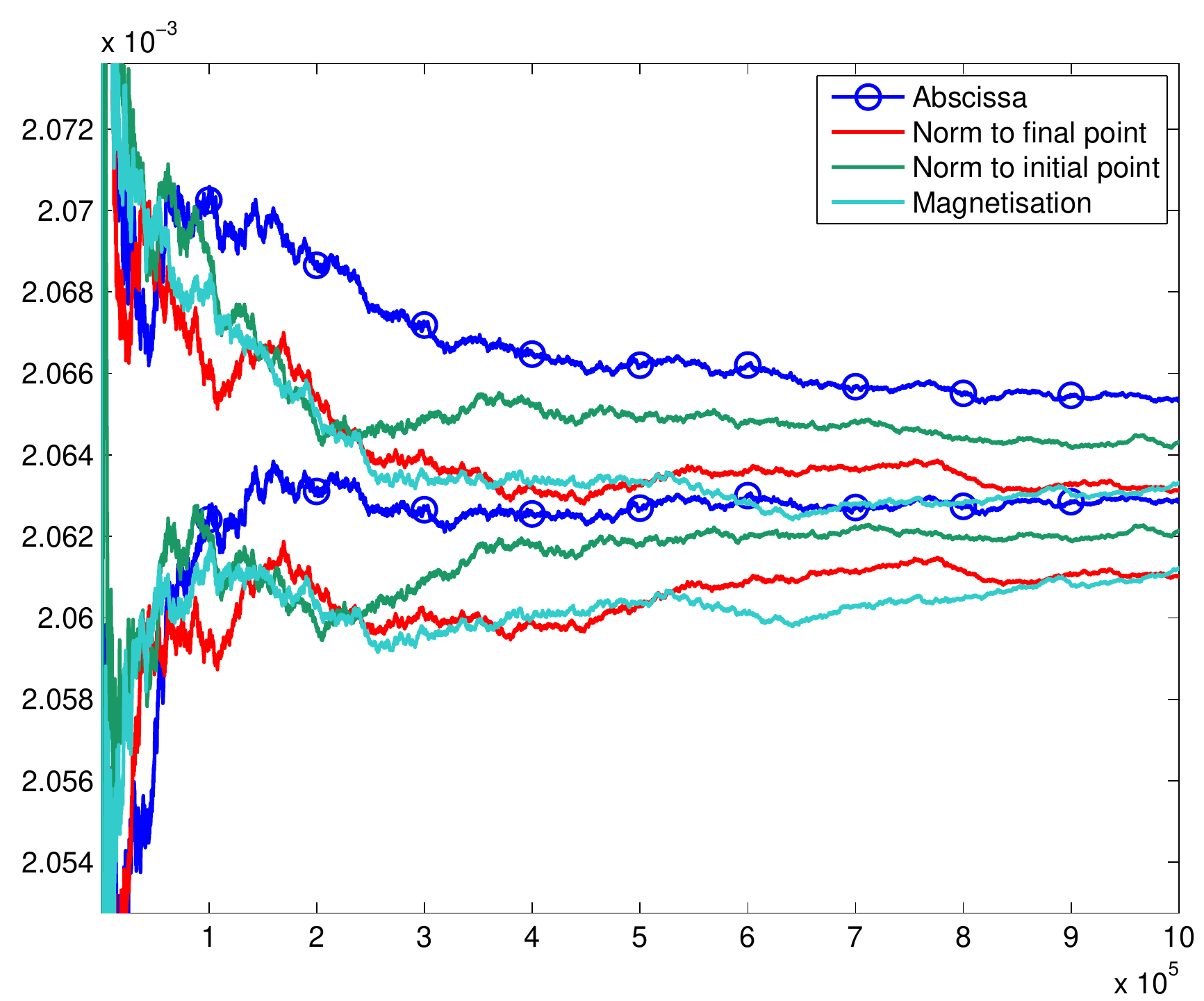}
% \caption{$\gamma=1$. Upper $\beta=10$. Lower $\beta=20$.}
% \label{AC2D_beta_20_gamma_1_nrep100_A}
% \end{figure}
% \begin{figure}[!htp]
% \centering
% \includegraphics[scale=0.5]{simus/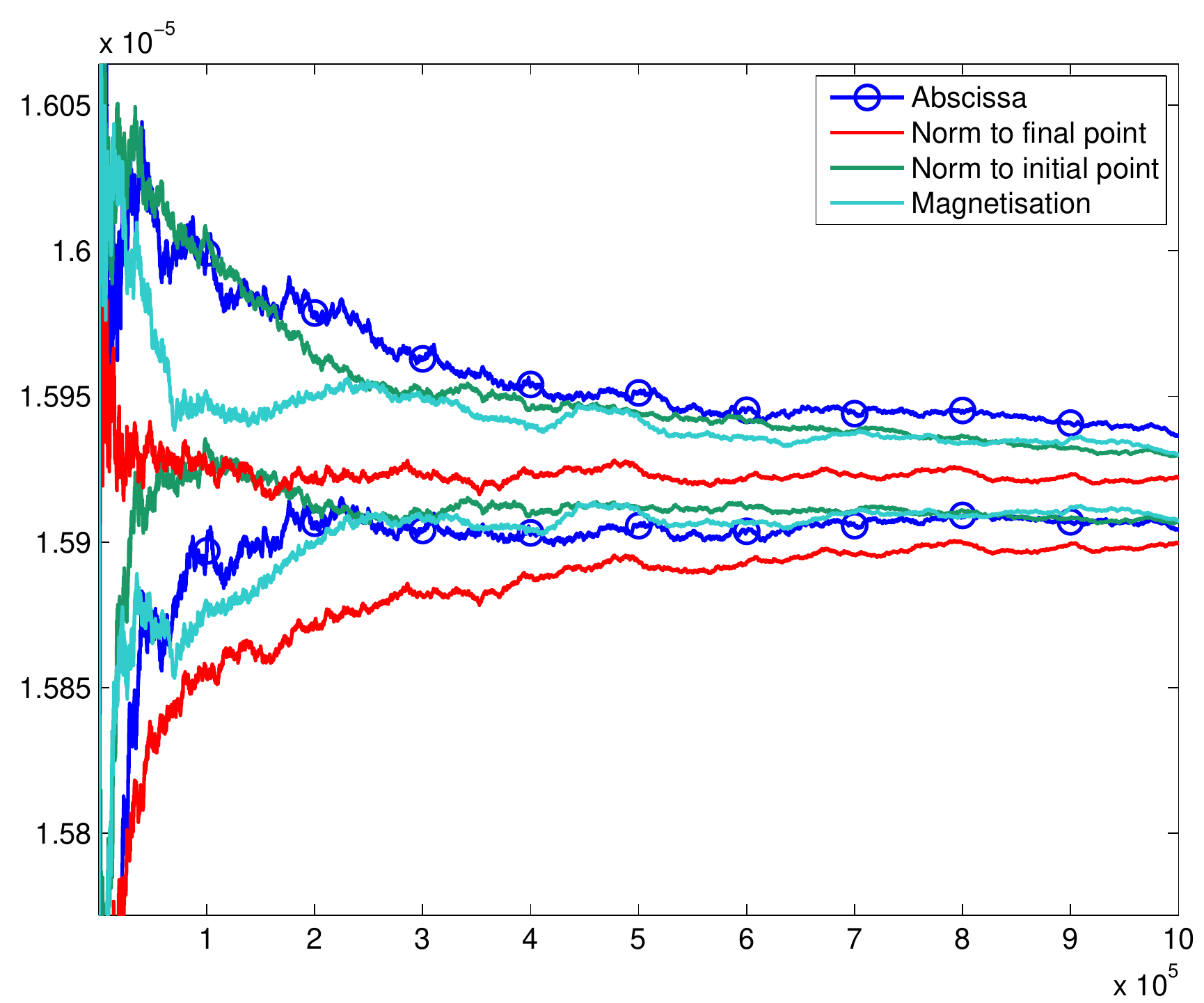} \\
% \includegraphics[scale=0.5]{simus/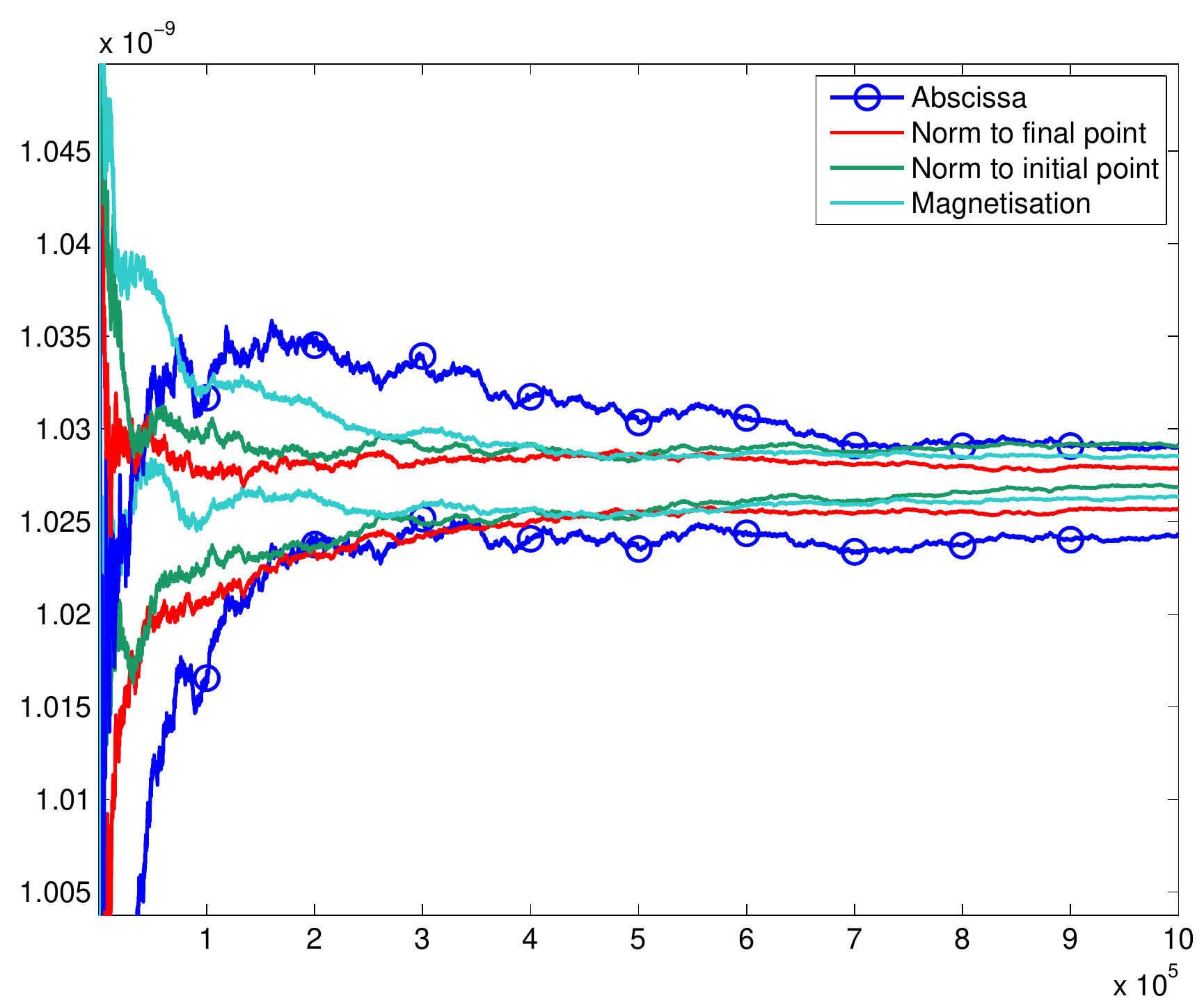}
% \caption{$\gamma=1$. Upper $\beta=40$. Lower $\beta=80$.}
% \label{AC2D_beta_20_gamma_1_nrep100_B}
% \end{figure} 

\begin{figure}[!htp]


\centering
\includegraphics[scale=0.4]{beta_10_gamma_1.pdf} 
\includegraphics[scale=0.4]{beta_20_gamma_1.pdf} \\
\includegraphics[scale=0.4]{beta_40_gamma_1.pdf} 
\includegraphics[scale=0.4]{beta_80_gamma_1.pdf}
\caption{Evolution as a function of $N$ of the
empirical mean $\overline{p}_{N}$ and
of the associated $95\%$ confidence intervals
$[\overline{p}_{n}-\delta_N/2,\overline{p}_{n}+\delta_N/2]$ with
$\gamma=1$. Upper left: $\beta=10$, upper right: $\beta=20$, lower
left: $\beta=40$, lower right $\beta=80$. }
\label{AC2D_beta_20_gamma_1_nrep100}
\end{figure}

For comparison, the results obtained with a standard direct
Monte-Carlo estimation are given in Table \ref{CMC_AC2D_gamma_1} for
$\beta\in\left\{10,20,40\right\}$ with $N=6.10^8$ realizations. The
results are consistent with those obtained by the AMS algorithm.
For
$\beta=80$, the probability is very small and we were not able to get
a reliable result by standard direct Monte-Carlo simulations with a
reasonable number of realizations.

\begin{table}[htbp]
\centering
\begin{tabular}{|c||c|c|c|}
$\beta$ & 10 & 20 & 40 \\
$\overline{p}_{N}^{\text{MC}}$ & 2.755e-2 & 2.062e-3 & 1.582e-5\\
$\delta_N$ & 0.003e-2 & 0.007e-3 & 0.063e-5\\
%$\epsilon_N$ & 0.02425 & 0.08981 & 1.026\\
\end{tabular}
\caption{Standard direct Monte-Carlo estimation with $N=6.10^8$ realizations.}
\label{CMC_AC2D_gamma_1}
\end{table}

\subsubsection{Simulations for $\gamma=0.1$ and $\beta=80$}

When the parameter $\gamma$ is smaller than $1/8$, the point $(0,0)$
is no longer a saddle point (but instead a local maximum). In this case, there are
two saddle points on the line $\xi^{4}=0$ (they are symmetric with
respect to $(0,0)$). We take $\gamma=0.1$ and represent on
Figure~\ref{AC2D_beta_80_gamma_05_nrep100} the evolution of the
confidence interval for the estimator $\hat{p}$ as a function
of the number $N$ of independent runs of the algorithm and for different values of $\nrep$: $\nrep=10$ (with up to $N=6.10^6$ realizations), $\nrep=100$ (with up to $N=10^6$ realizations) and $\nrep=1000$ (with up to $N=10^5$ realizations). The inverse temperature is fixed to $\beta=80$.

We observe that the statistical fluctuations of the estimator for the
reaction coordinate $\xi^3$ (abscissa) are much larger than for the
three other ones. The estimator is unbiased, but a large number of
realizations is required to get a significant confidence interval. By
comparing with the results obtained in the Section~\ref{sec:bi-channel}, we thus
conclude that the origin of these fluctuations is related to the
existence of two pathways from $A$ to $B$, rather than to the
smallness of the estimated probability for example. In fact, the two pathways do not play symmetric roles when using the third reaction coordinate, although they play symmetric roles when using the other ones. This is in
agreement with some discussions in the paper~\cite{GlassermanHeidelbergerShahabuddinZajic1998} about the origins of the so-called ``apparent bias'' which is observed with splitting algorithms.

\begin{figure}[!htp]
\centering
\includegraphics[scale=0.4]{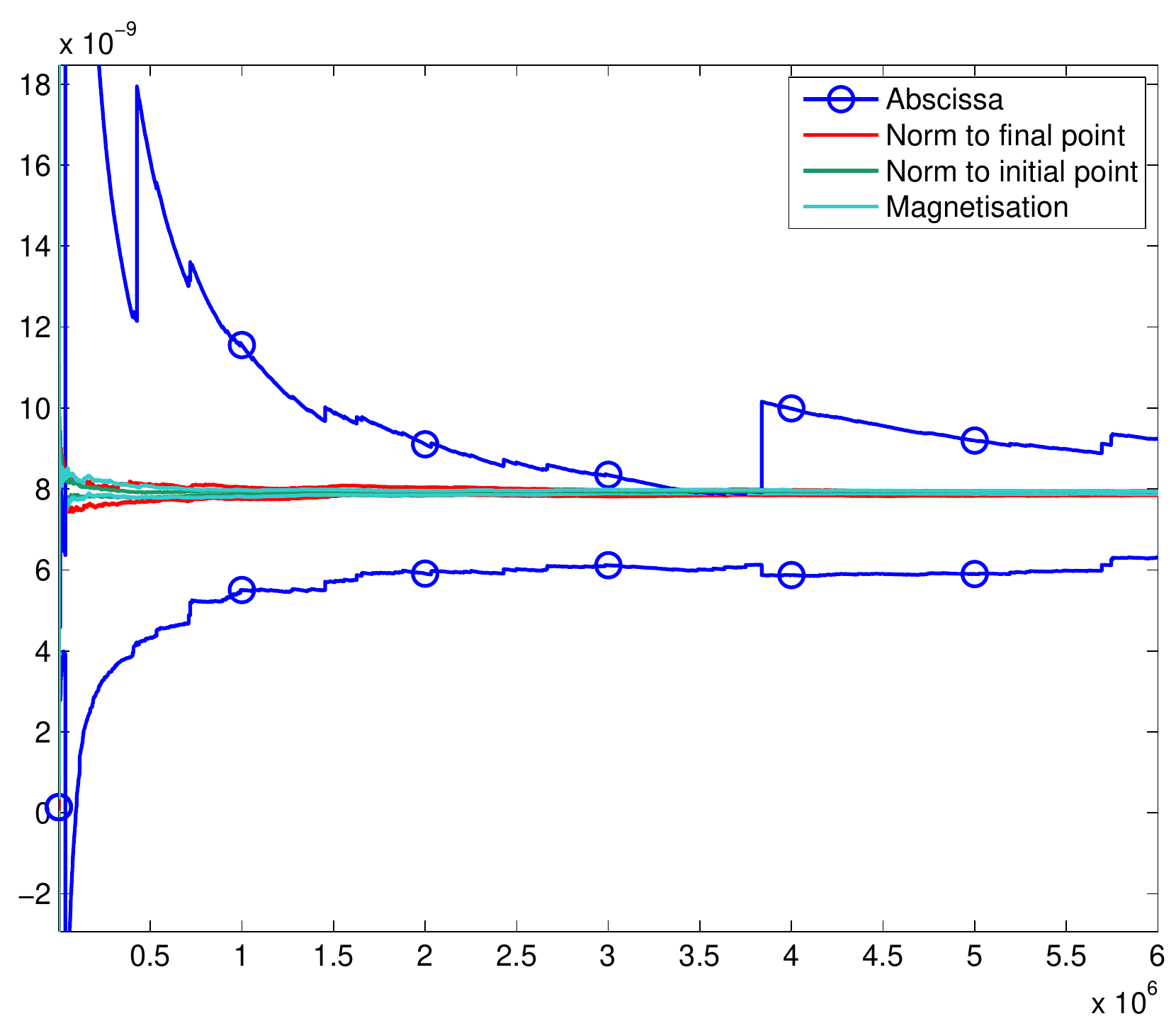}
\includegraphics[scale=0.4]{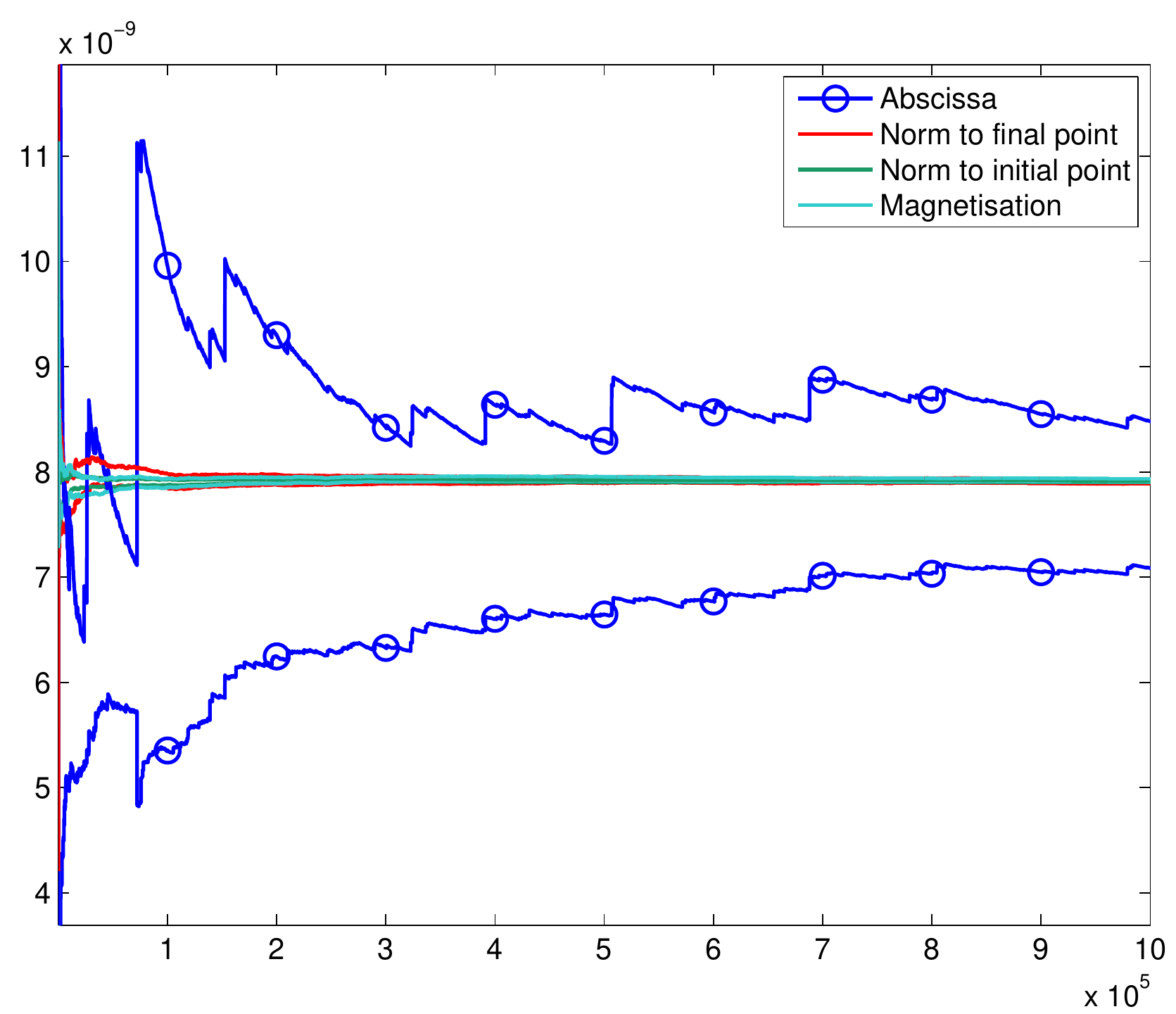}\\
\includegraphics[scale=0.4]{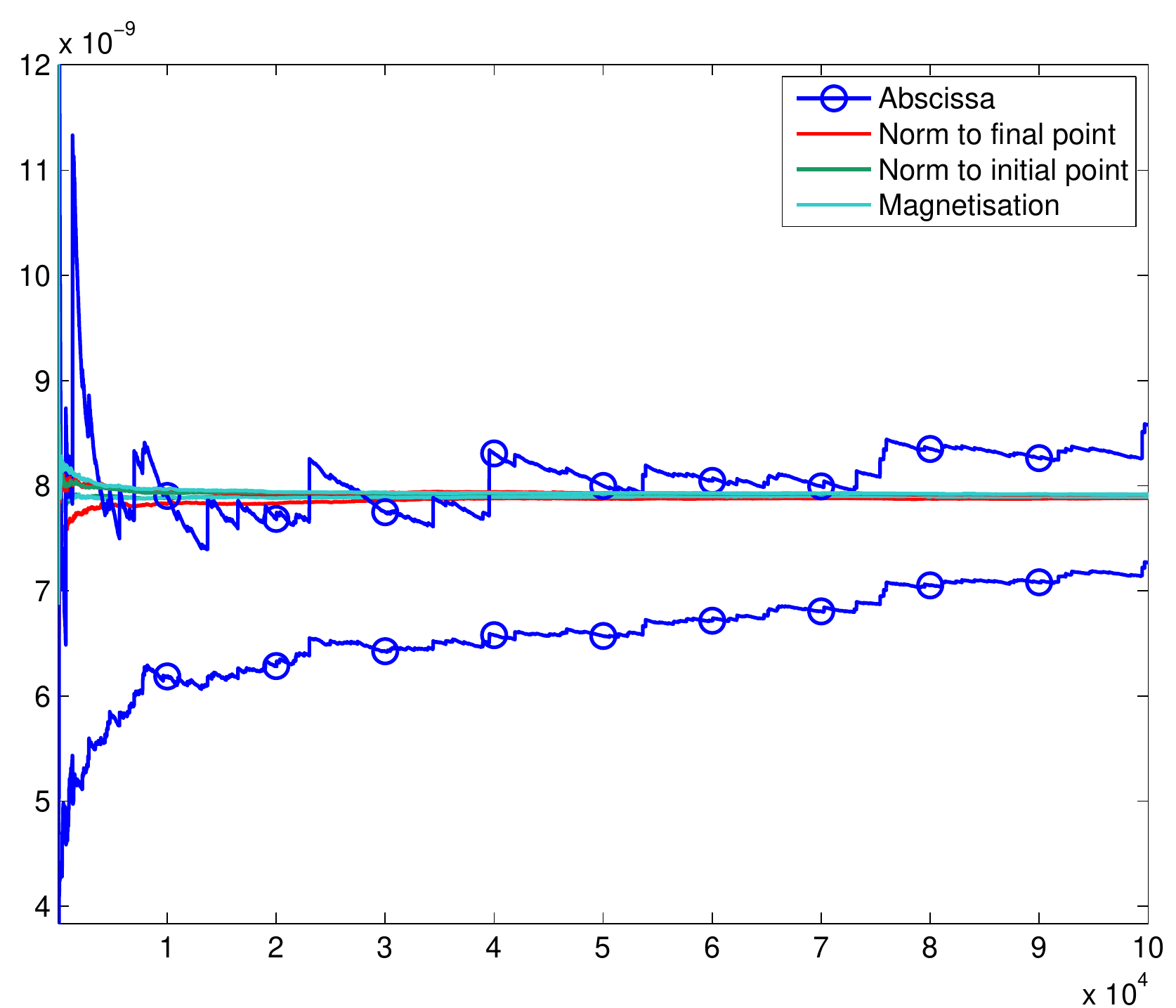}
\caption{Evolution as a function of $N$ of the
empirical mean $\overline{p}_{N}$ and
of the associated $95\%$ confidence intervals
$[\overline{p}_{n}-\delta_N/2,\overline{p}_{n}+\delta_N/2]$ with
$\gamma=0.1$ and $\beta=80$. Upper left $\nrep=10$, upper right
$\nrep=100$, lower $\nrep=1000$.}
\label{AC2D_beta_80_gamma_05_nrep100}
\end{figure}

\subsection{Conclusions and practical recommendations}\label{sec:conc}

Let us summarize our findings on these numerical simulations.
\begin{itemize}
\item We always observe that for sufficiently large values of $N$
  (number of independent Monte Carlo simulations), the confidence
  intervals of the estimator $\hat{p}$ overlap, whatever
  $\nrep$, $k$ or $\xi$. This is in accordance with our theoretical
  result on the unbiasedness of this estimator.
\item We observe numerically in our
  two-dimensional simulations that the estimator $\hat{p}$ has a heavy
  tail so that very few realizations
  contribute a lot to the empirical average. This explains why for some choices of the reaction coordinate
  the efficiency of the estimation may be very poor since
  the empirical average converges very slowly to its
  limit (namely $N$ should be taken sufficiently large to obtain
  significant results). Actually, as explained in
  Remark~\ref{rem:heavy_tail}, even in
  some idealized setting where the reaction coordinate is the committor
  function, in some small
  probability regime, the estimator has a log-normal distribution, and
  thus has heavy tails.
% \item In general, we observe that the median is smaller than the average of the estimator (this can be analytically checked in the ideal case, see Section~\ref{sec:log-normal}). This implies that with unsufficiently many independent Monte Carlo realizations, the probability of the rare event will be under-estimated. % Attention pour les applications en fiabilite
\item In
  multiple channel cases (namely when multiple pathways exist from
  $A$ to $B$), one may observe non-overlapping empirical
  confidence intervals of the estimator for different reaction
  coordinates if the number of independent realizations $N$ is too small. This is related to the fact
  that very large contributions to the average of the estimator are
  associated with trajectories going through very unlikely (for the
  considered reaction coordinate and value of $\nrep$) channels. This
  is a known phenomenon for splitting algorithms in general, see
  \cite{GlassermanHeidelbergerShahabuddinZajic1998}, where it is referred to as ``apparent bias''. In particular, a good reaction coordinate in a multiple
  channel case is such that, conditionally to reach a certain maximum
  level $z$, the relative likelihood of the channels used by the paths
  to reach this maximum level does not depend too much on $z$. For
  example, a reaction coordinate close to the committor function is a
  good candidate to achieve this purpose. This opens the route to
  adaptive algorithms, where the reaction coordinate would be updated
  in order to get closer and closer to the committor function as long as
  successive AMS algorithms are launched (see~\cite{CerouGuyaderLelievrePommier2011}). We
  intend to investigate this direction in future works.
\end{itemize}

As a conclusion to these numerical results, we thus recommend the
following in order to get reliable estimates of the probability $\P(\tau_B < \tau_A)$ with the AMS algorithm.
\begin{itemize}
\item One should be careful in the implementation of the splitting and
  branching steps, in particular in the treatment of replicas which
  have the same maximum level and in the definition of the branching
  point in the resampling procedure. For correct implementations, 
  unbiased estimators can be built, and the general framework of Section~\ref{sect:GAMS} yields many variants for the algorithm.
  
\item Thanks to the unbiasedness property, one should check the independence of the computed probability on the choice of the parameters: the number of replicas $\nrep$, the minimum number of resampled replicas $k$ and the reaction coordinates $\xi$. In particular, we recommend to perform simulations with  various reaction coordinates and to set the minimal number of independent realizations such that the empirical confidence intervals overlap.

\item Thanks to the unbiasedness property, one can perform
  many independent realizations of the algorithm with a relatively
  small number of replicas, instead of using a few independent
  realizations with a large number of replicas. Indeed, assume  that we are in a regime where the variance scales like
  $\frac{1}{\nrep \times N}$; this is the case for instance in the so-called ideal case for sufficiently large $\nrep$ and $N$, see \cite{BrehierLelievreRousset2015}.
  Since the
  parallelization of independent runs of the algorithm is trivial,
  for a fixed product $\nrep \times N$ (namely for a fixed CPU cost),
  the strategy with less replicas is thus much more interesting in terms of
  wall-clock time (which scales like $\nrep$) than the strategy with
  more replicas.
  
 % However, depending on the problem at hand and on the objective, one may consider the estimated quantities with care when using small system of replicas sizes. 
  
   \end{itemize}

\section*{Acknowledgements}
C.-E.~Br\'ehier  is grateful to INRIA Rocquencourt for having funded his
postdoctoral position (September 2013-December 2014), and acknowledges support  from the SNF grant
200020-149871/1. M.~Gazeau is grateful to INRIA Lille Nord Europe
where a part of this research has been conducted and to Labex CEMPI
(ANR-11-LABX-0007-01). The work of T.~Leli\`evre and M.~Rousset is
supported by the European Research Council under the European Union's
Seventh Framework Programme (FP/2007-2013) / ERC Grant Agreement
number 614492. The authors are grateful to 
the Labex Bezout (ANR-10-LABX-58-01) which supported this project at
an early stage, during the CEMRACS 2013. Finally, the authors would like to thank 
F. Bouchet, F. C\'erou, A. Guyader, J. Rolland and E. Simonnet for many fruitful discussions.

\bibliographystyle{plain}%pour gerer les prenoms: initiales seulement + dans la ref le nom et l'annee: apalike
\bibliography{bibBGGLR}

\end{document}